\documentclass{IEEEtran}

\usepackage{cite}
\usepackage{amsmath,amssymb,amsfonts}
\usepackage[capitalise]{cleveref}
\usepackage[]{footmisc}
\usepackage{xcolor}
\usepackage{algorithm}
\usepackage{algorithmic}
\usepackage[algo2e]{algorithm2e} 
\usepackage{graphicx}
\usepackage{textcomp}
\usepackage{psfrag}
\usepackage{epsfig}
\usepackage{subcaption}
\usepackage{float}
\usepackage{xcolor}
\usepackage{booktabs}
\usepackage{multicol}
\usepackage{arydshln}
\usepackage{multirow}
\usepackage{wrapfig}
\usepackage[numbers,sort&compress]{natbib}

\newtheorem{mydef}{Definition}
\newtheorem{mypro}{Proposition}
\newtheorem{myass}{Assumption}
\newtheorem{myrem}{Remark}
\newtheorem{mylem}{Lemma}
\newtheorem{mythr}{Theorem}

\newtheorem{mycor}{Corollary}

\newtheorem{myproblem}{Problem}

\SetKwRepeat{Repeat}{Repeat}{Until}
\SetKw{Return}{Return}

\begin{document}
\title{\LARGE {\color{blue}Data-Driven Co-Design of Event-Triggered and Sparse Control for Resource-Aware Networked Control Systems}}

\author{Zhaohua~Yang,~Xiaoxu~Lyu,~Dawei~Shi,~\IEEEmembership{Senior Member,~IEEE},~and~Ling~Shi,~\IEEEmembership{Fellow,~IEEE}
	\thanks{\rm Zhaohua Yang and Xiaoxu Lyu are with the Department of Electronic and Computer Engineering, The Hong Kong University of Science and
		Technology, Clear Water Bay, Hong Kong, China (email: zyangcr@connect.ust.hk; eelyuxiaoxu@ust.hk).}
	\thanks{Dawei Shi is with the State Key Laboratory of
 Intelligent Control and Decision of Complex Systems and MIIT Key
 Laboratory of Servo Motion System Drive and Control, School of Automation, Beijing Institute of Technology, Beijing 100081, China (e-mail: daweishi@bit.edu.cn).}
\thanks{Ling Shi is with the Department of Electronic and Computer Engineering, the Department of Chemical and Biological Engineering, and The Cheng Kar-Shun Robotics Institute (CKSRI), The Hong Kong University of Science and Technology, Hong Kong, China (e-mail: eesling@ust.hk).}
\thanks{\rm The work is supported by the Hong Kong RGC General Research Fund 16203723.}
  }
\maketitle

\begin{abstract}
This paper investigates the data-driven co-design of event-triggered control (ETC) and sparse control (SC) for networked control systems (NCSs) with unknown linear dynamics. {\color{blue}While ETC and SC have been widely studied as effective strategies to reduce communication and computation burdens on different resource dimensions, existing works typically address them separately and rely on accurate system models. Furthermore, their joint design in a data-driven setting, especially in the presence of measurement and process noise, remains largely unexplored. To bridge these gaps, we propose a unified data-driven framework that simultaneously accounts for bounded state and input measurement noise as well as process noise, and enables the co-design of ETC mechanisms and sparse controllers directly from data. Within this framework, we characterize stability, uniformly ultimately bounded (UUB) behavior, and $H_\infty$ performance under different noise conditions.} For each problem, given the event-triggered parameters, we provide a sufficient condition for the existence of a feasible controller and develop an iterative algorithm to solve the associated nonconvex optimization problem. Numerical examples are provided to demonstrate the effectiveness of the proposed methods.
\end{abstract}

\begin{IEEEkeywords} 
Data-driven control, event-triggered control, sparse control.
\end{IEEEkeywords}

\IEEEpeerreviewmaketitle
\section{Introduction}
{\color{blue}Networked control systems (NCSs) have found widespread applications in diverse fields, and have attracted considerable research attention over the past decades. In such systems, information is exchanged among sensors, controllers, and actuators \cite{chen2025self}. However, practical implementations are often constrained by limited communication and computation resources, which restrict the scalability and applicability of these systems.} The operational cost mainly arises from communication over the sensor-to-controller (S2C) and controller-to-actuator (C2A) channels, as well as from intra-controller computation. These considerations motivate the adoption of event-triggered control (ETC) and sparse control (SC) as effective strategies for reducing resource consumption. Specifically, ETC reduces communication load by transmitting data only when certain event-triggered conditions are violated, while SC reduces dataflow burden and computational complexity by promoting sparsity in the controller gain matrix. Both approaches provide principled ways to balance control performance and resource usage~\cite{jovanovic2016controller,heemels2012introduction}.

{\color{blue}The ETC mechanism can be broadly categorized into static and dynamic ETC. In static ETC, the triggering condition is a static function of the available signals, without introducing additional dynamic variables \cite{heemels2012introduction,tabuada2007event,heemels2012periodic}, whereas dynamic ETC incorporates auxiliary dynamics to regulate the triggering behavior \cite{girard2014dynamic}. In parallel, SC research can be mainly divided into two directions: sparsity-promoting control and structured control. The sparsity-promoting approach investigates the trade-off between system performance and controller sparsity \cite{lin2013design,babazadeh2016sparsity,masazade2012sparsity}. 
The structured control direction focuses on synthesizing controllers with a prescribed sparsity pattern \cite{lin2011augmented,fardad2014design,ferrante2019design}, often dictated by communication or implementation constraints. }

The aforementioned literature on ETC and SC generally assumes perfect knowledge of the system dynamics. However, in practical applications, the system dynamics are often inaccessible or difficult to identify, motivating the development of data-driven control approaches.

Data-driven control has attracted significant attention in recent years as a promising strategy for situations where the system dynamics are unknown \textit{a priori} and only a sequence of measured data is available. In practice, measurements are often corrupted by noise, which motivates robust data-driven methods that explicitly account for uncertainty. Existing approaches can be broadly categorized into indirect and direct methods. Indirect methods first identify a model from data and then design controllers \cite{dean2020sample,ferizbegovic2019learning}. In contrast, direct methods bypass system identification and synthesize controllers directly from data, often leveraging Willems et al.'s fundamental lemma \cite{willems2005note}. Building on this foundation, recent works have addressed fundamental control problems such as stabilization and optimal control under noisy data \cite{van2020data,de2019formulas,de2021low}. More recently, robust data-driven approaches have been developed to explicitly handle bounded noise by constructing uncertainty sets of system matrices consistent with the data. For instance, Bisoffi et al. \cite{bisoffi2024controller} considered robust stabilization under measurement noise, while van Waarde et al. \cite{van2020noisy,van2023quadratic} introduced the matrix S-lemma to address $H_2$ and $H_\infty$ control problems. Alternative formulations based on matrix ellipsoids were proposed in \cite{bisoffi2022data}. 

Within the data-driven control framework, several works have investigated ETC and SC. For ETC, Digge and Pasumarthy \cite{digge2022data} analyzed discrete-time ETC using noise-free data. Feng et al. \cite{feng2025data} proposed a co-design approach for event-triggered parameters and controller gains with event-triggered measurements. Persis et al. \cite{de2023event} studied ETC under both noise-free and noisy data scenarios, providing key results on minimal inter-event times. Regarding SC, Eising and Cort{\'e}s \cite{eising2022informativity} developed a robust stabilizer with maximal sparsity, while Miller et al. \cite{miller2025data} designed structured controllers to improve $H_2$ performance. {\color{blue}Despite these advances, existing studies have not addressed the joint co-design of ETC and SC in a data-driven setting. More importantly, such a combination is nontrivial due to several fundamental challenges. First, constructing a general data-driven framework that simultaneously accounts for measurement noise and process noise requires careful reorganization. Second, event-triggered mechanisms introduce state-dependent uncertainties into the closed-loop system, since control updates are performed based on outdated state and input information. Third, sparse control design itself involves bilinear coupling between decision variables, which further complicates the synthesis problem. In particular, when both objectives are considered simultaneously, the resulting co-design problem becomes intrinsically more challenging, as sparsity-promoting objectives and event-triggered parameters cannot be jointly incorporated into a unified convex formulation. To the best of our knowledge, existing data-driven ETC and SC approaches have not simultaneously addressed these challenges. Nevertheless, addressing these challenges is essential for enabling efficient resource utilization in practical control systems. From a practical perspective, reducing communication and computation burdens is crucial in many real-world applications. For instance, in Unmanned Aerial Vehicle (UAV) networks \cite{gupta2015survey}, lowering the frequency of control updates reduces communication load and energy consumption, while sparse control decreases onboard computational demand. Similarly, in Industrial Internet of Things (IIoT) systems \cite{wu2019performance}, reducing transmission frequency alleviates network congestion, whereas lowering per-update computational cost enables scalable deployment across resource-constrained devices. These two aspects are inherently coupled, and optimizing only one may lead to inefficient or even infeasible implementations. 

Motivated by these challenges, we propose a unified data-driven co-design framework of ETC and SC for systems with unknown dynamics. The main contributions are summarized as follows.}
\begin{enumerate}
    \item We propose a novel data-driven framework that simultaneously accounts for bounded state measurement noise, input measurement noise, and process noise. Within this framework, we address the co-design of ETC and SC for unknown systems. Specifically, our approach enables the design of sparse controllers given the event-triggered parameters, which reflect the desired communication load in the S2C and C2A channels.
    \item {\color{blue}We adopt a unified cost modeling perspective that captures the interplay between communication and computation induced by event-triggering and controller sparsity. This perspective reveals an inherent trade-off that necessitates the co-design of ETC and sparse control for unknown systems. Accordingly, our method enables the co-design of ETC and SC to meet various control objectives under different assumptions on process noise. Specifically, in the absence of process noise, we design the sparsest stabilizing controller for given event-triggered parameters. With bounded process noise, we characterize the trade-off between the size of the uniformly ultimately bounded (UUB) set and controller sparsity, given event-triggered parameters. For square-summable process noise, we investigate the trade-off between the $H_\infty$ performance and controller sparsity, also under given event-triggered parameters.}
    \item For each problem, we provide a sufficient condition for the existence of a feasible controller and develop an iterative algorithm to solve the associated nonconvex optimization problem. The effectiveness of our methods is demonstrated through several numerical examples.
\end{enumerate}
The remainder of this paper is organized as follows. Section~\ref{Preliminaries} introduces the event-triggered control, sparse control, and unified cost model in NCSs, and formulates the problems of interest. Section~\ref{Main Results} presents the main results, including the data collection mechanism and construction of the uncertainty set, stability analysis without process noise, UUB analysis with process noise, and $H_\infty$ performance analysis with process noise. Section~\ref{Simulations} presents numerical examples to demonstrate the effectiveness, scalability, and advantages of the proposed methods. Finally, Section~\ref{Conclusions} concludes the paper.

\textit{Notations}: Let $\mathbb{R}$, $\mathbb{Z}$, and $\mathbb{N}$ denote the sets of real numbers, integers, and nonnegative integers, respectively. Let $\mathbb{Z}_{[i,j)}$ denote the set of integers in the interval $[i, j)$. Let $\mathbb{R}^{m \times n}$ and $\mathbb{R}^n$ denote the sets of real matrices of size $m \times n$ and real column vectors of size $n$, respectively. For a matrix $X$, let $X^{-1}$, $X^\top$, and $\mathrm{Tr}(X)$ denote its inverse, transpose, and trace, respectively. Let $X \succeq 0$ indicate that $X$ is positive semidefinite. Let $X_{ij}$ denote the $(i,j)$-th entry of $X$, $X^{|\cdot|}$ the element-wise absolute value, and $X^{\circ -1}$ the element-wise inverse. The space $l_2[0, +\infty)$ denotes the set of square-summable sequences. The notation $\triangleq$ denotes equality by definition, and $\forall$ means ``for all''. The operator $\mathrm{blkdiag}(\cdot)$ denotes a block diagonal matrix. {\color{blue}The $l_0$ norm $\|X\|_0$ denotes the number of nonzero entries of $X$.
The $l_1$ norm $\|X\|_1$ denotes the sum of absolute values of all entries of $X$.
The Frobenius norm $\|X\|_F$ is defined as $\|X\|_F \triangleq \sqrt{\mathrm{Tr}(X^\top X)}$}. The sets $\mathbb{S}^n$, $\mathbb{S}^n_{+}$, and $\mathbb{S}^n_{++}$ denote the sets of symmetric, symmetric positive semidefinite, and symmetric positive definite matrices of size $n \times n$, respectively. The notations $I$, $0$, and $\mathbf{1}$ denote identity, zero, and all-ones matrices with appropriate dimensions. {\color{blue}We sometimes abbreviate $\left[\begin{smallmatrix}X_1 & X_2 \\ X_2^\top & X_3\end{smallmatrix}\right]$ as  $\begin{bmatrix}\begin{smallmatrix}X_1 & X_2 \\ * & X_3\end{smallmatrix}\end{bmatrix}$ to save space. For compactness in LMI derivations, the following block-matrix constructions are occasionally used as shorthand for repeatedly appearing structures: $\mathcal{B}_1(X_1,X_2,X_3) \triangleq \begin{bmatrix}\begin{smallmatrix}0 &0 &0\\X_1 &0 &0\\X_2 &-X_2 &X_3 \end{smallmatrix}\end{bmatrix}$, $\mathcal{B}_2(X_1,X_2) \triangleq \begin{bmatrix}\begin{smallmatrix}X_1 &X_2\\-X_1 &0\\ 0&0 \end{smallmatrix}\end{bmatrix}$, $\mathcal{B}_3(X) \triangleq \begin{bmatrix}\begin{smallmatrix}X \\-X \\0 \end{smallmatrix}\end{bmatrix}$, and $\mathcal{B}_4(X) \triangleq \begin{bmatrix}\begin{smallmatrix}X &0 &0\\0 &0 &0\\0 &0 &0 \end{smallmatrix}\end{bmatrix}$.}
\begin{figure}[t]
    \centering    \includegraphics[width=1\columnwidth]{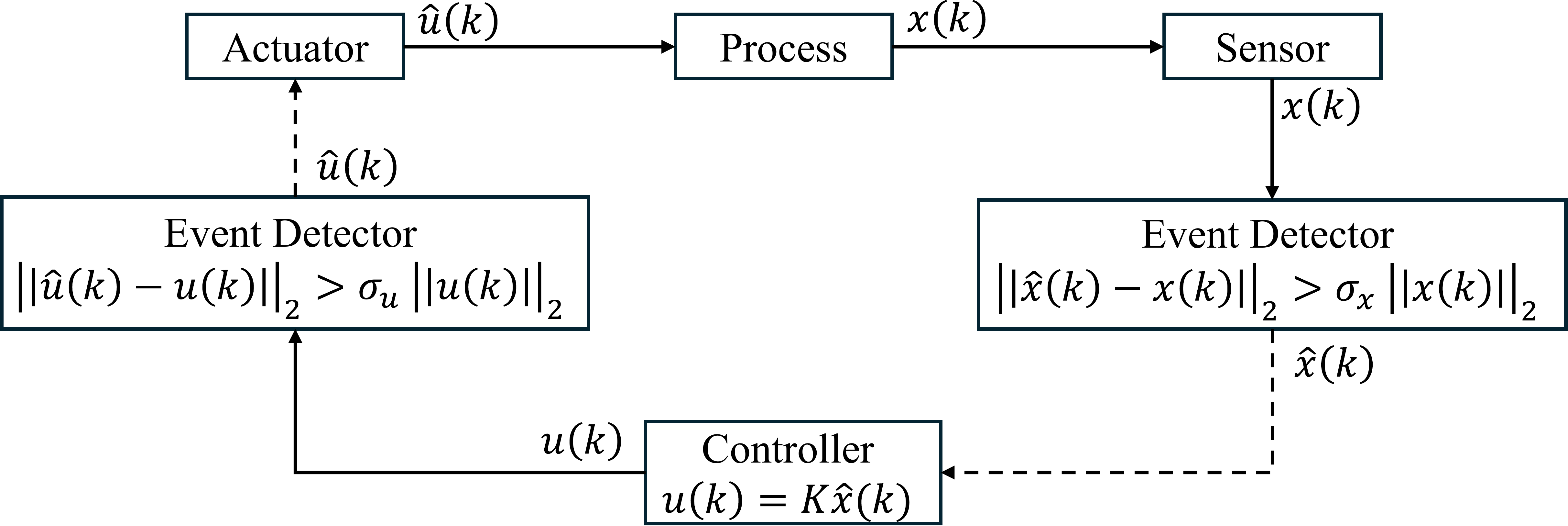}
    \caption{Block diagram of the information flow.}
    \label{Block diagram}
\end{figure}
\section{Preliminaries}\label{Preliminaries}
{\color{blue}
\subsection{Event-Triggered Control in NCSs}\label{Event-trigger control}
}
The block diagram of a typical NCS is shown in \cref{Block diagram}, where an event-triggered mechanism and a state-feedback control strategy are considered. The dashed lines represent communication channels. 

The block labeled ``Process'' corresponds to a discrete-time linear time-invariant (LTI) system described by
\begin{equation}\label{basic system}
    x(k+1) = A_* x(k) + B_* \hat{u}(k) + E d(k),
\end{equation}
where $x(k) \in \mathbb{R}^{n_x}$ denotes the system state, $\hat{u}(k) \in \mathbb{R}^{n_u}$ is the control input applied to the system and $d(k) \in \mathbb{R}^{n_d}$ represents the process noise. {\color{blue}The S2C and C2A channels thereby consist of $n_x$ and $n_u$ communication links for transmitting state and input components, respectively}. We make the standard assumption that the pair $(A_*, B_*)$ is stabilizable. Throughout this paper, the matrices $A_*$ and $B_*$ are assumed to be unknown.


The sensor measures the full state information at each sampling instant. Each sampled state is first processed by an event detector, which determines whether to transmit the sampled state to the controller via the S2C channel based on the following event-triggered condition:
\begin{equation}\label{event-trigger condition x}
    \|x(k)-\hat{x}(k)\|_2 > \sigma_x \|x(k)\|_2,
\end{equation}
where $\hat{x}(k)$ denotes the most recently transmitted state and $\sigma_x > 0$ is a user-specified threshold. In other words, the current state $x(k)$ is transmitted to the controller only if it deviates from $\hat{x}(k)$ by more than a fraction $\sigma_x$ of its norm. This mechanism reduces unnecessary transmissions when the system state changes only slightly, thereby alleviating the communication burden over the S2C channel. A larger $\sigma_x$ results in less frequent transmissions but may degrade control performance, and vice versa.

The control input $u(k)$ is computed by a state-feedback controller using the most recently received state:
\begin{equation}\label{state-feedback controller}
    u(k) = K \hat{x}(k),
\end{equation}
where $K$ is the controller gain to be designed. The computed control input $u(k)$ is then processed by another event detector, which decides whether to transmit $u(k)$ to the actuator via the C2A channel according to the following event-triggered condition:
\begin{equation}\label{event-trigger condition u}
    \|u(k)-\hat{u}(k)\|_2 > \sigma_u \|u(k)\|_2,
\end{equation}
where $\hat{u}(k)$ denotes the most recently transmitted control input, and $\sigma_u > 0$ is another user-specified threshold. If the condition is satisfied, $u(k)$ is transmitted to the actuator; otherwise, the actuator continues to use $\hat{u}(k)$.

For clarity, let $\{k^x_i\}_{i\geq 1}$ and $\{k^u_i\}_{i\geq 1}$ denote the sequences of transmission instants for the state and control input, respectively, with $k^x_1 = 0$ and $k^u_1 = 0$. The most recently transmitted state and control input at time $k$ are given by
\begin{equation}
    \begin{split}
        \hat{x}(k) &= x(k^x_i), \quad k \in \mathbb{Z}_{[k^x_i, k^x_{i+1})}, \\
        \hat{u}(k) &= u(k^u_i), \quad k \in \mathbb{Z}_{[k^u_i, k^u_{i+1})}.
    \end{split}
\end{equation}
The next transmission instants are determined by
\begin{equation}
\begin{split}
    k^x_{i+1} &= \min\left\{k > k^x_i \mid \|x(k) - \hat{x}(k)\|_2 > \sigma_x \|x(k)\|_2 \right\}, \\
    k^u_{i+1} &= \min\left\{k > k^u_i \mid \|u(k) - \hat{u}(k)\|_2 > \sigma_u \|u(k)\|_2 \right\}.
\end{split}
\end{equation}

Define the state and input errors as
\begin{equation}
\begin{split}
    e_x(k) &\triangleq x(k) - \hat{x}(k), \\
    e_u(k) &\triangleq u(k) - \hat{u}(k).
\end{split}
\end{equation}
By construction, the following inequalities hold for all $k \in \mathbb{N}$:
\begin{equation}\label{event-trigger condition}
\begin{split}
    \|e_x(k)\|_2 &\leq \sigma_x \|x(k)\|_2, \\
    \|e_u(k)\|_2 &\leq \sigma_u \|u(k)\|_2.
\end{split}
\end{equation}

{\color{blue}
\subsection{Sparse Control in NCSs}
The feedback control strategy in \eqref{state-feedback controller} typically constructs a feedback loop that connects $\hat{x}$ and $u$ within the controller. In the centralized setting, this interconnection is dense, meaning that each control input $u_i$ depends on the entire state vector $\hat{x}$. While such centralized controllers achieve optimal performance, they impose significant communication and computation burdens, especially in large-scale systems with limited resources. To mitigate these burdens, we aim to design sparse controllers, where each control input depends only on a subset of the state variables.  This sparsity is reflected in the  gain matrix $K$. Specifically, the feedback law \eqref{state-feedback controller} can be written as
\begin{equation}
[u(k)]_i = \sum_{j=1}^{n_x} K_{ij}[\hat{x}(k)]_j, \quad \forall\, 1 \leq i \leq n_u.
\end{equation}
To distinguish from the inter-device communication in the S2C and C2A channels, we refer to the internal data exchange within the controller as dataflow. Each nonzero entry $K_{ij}$ thereby represents a dataflow link from the $j$-th state component to the computation of the $i$-th control input, indicating that $[\hat{x}(k)]_j$ must be transmitted and processed in the corresponding computation unit. Conversely, if $K_{ij} = 0$, such a dataflow link is not required. Therefore, the sparsity pattern of $K$ directly characterizes the internal dataflow topology of the controller, where the number of nonzero entries determines the number of required dataflow links as well as the associated computational load. This motivates the use of $\|K\|_0$ as a measure of dataflow and computation complexity.}

{\color{blue}
\subsection{A Unified Cost Modeling Framework} \label{unified cost model}
The communication and computation cost (operational cost) of NCSs primarily arises from three sources: the communication cost over S2C channels, the communication cost over C2A channels, and the dataflow and computation cost within the controller. To capture these components in a unified manner, we consider the following operational cost model, which characterizes the average cost per time step:
\begin{equation*}
    J = n_x \alpha_x r_x + n_u \alpha_u r_u + (\alpha_m + \alpha_f)\|K\|_0 + \alpha_a (\|K\|_0-\#_{nzr}).
\end{equation*}
Here, $r_x$ and $r_u$ denote the transmission rates over the S2C and C2A channels, respectively. The positive coefficients $\alpha_x, \alpha_u, \alpha_f, \alpha_m, \alpha_a$ quantify the costs associated with one S2C link, one C2A link, one dataflow link, one multiplication, and one addition, respectively. Besides, $\#_{nzr}$ denotes the number of nonzero rows in $K$. Since the number of S2C and C2A links are $n_x$ and $n_u$, respectively, the first two terms represent the average communication cost. Moreover, $\|K\|_0$ corresponds to the number of dataflow links and multiplication operations, while $\|K\|_0 - \#_{nzr}$ equals the number of addition operations. Hence, the last two terms capture the internal dataflow and computation cost induced by the controller structure. This unified cost perspective highlights an inherent trade-off between ETC and SC, as $r_x$ and $r_u$ are implicitly determined by $\sigma_x$ and $\sigma_u$, respectively. This interplay motivates the co-design of ETC and SC to achieve efficient resource utilization in NCSs. Directly optimizing the above cost model is challenging for two reasons: first, the transmission rates $r_x$ and $r_u$ do not admit explicit analytical expressions in terms of the triggering parameters; second, even if such relationships are approximated, the resulting objective becomes highly nonconvex due to the implicit and nonlinear dependence between transmission rates and triggering thresholds. Therefore, we focus on the co-design problem under given event-triggered parameters, which serves as a tractable and essential step toward optimizing the overall operational cost.
}

\subsection{Problem Formulation}
This paper investigates the joint design of event-triggered parameters $\sigma_x, \sigma_u$ and a sparse controller $K$ for discrete-time linear systems with unknown $(A_*, B_*)$, using a data-driven approach. We assume access to a sequence of measured system states and control inputs corrupted by bounded measurement noise and process noise. Under these assumptions, we construct an uncertainty set $\Sigma$ containing all $(A, B)$ pairs that can possibly generate the measured data. The controller design must therefore be robust for all $(A, B) \in \Sigma$. Details of the data-driven framework are provided in \cref{Data Collection}. Before formalizing the problems, we introduce several key concepts essential for the subsequent analysis.

\begin{mydef}
    Consider system \eqref{basic system} with $d(k) = 0$. The closed-loop system under the event-triggered condition \eqref{event-trigger condition} and control law \eqref{state-feedback controller} is said to be \emph{asymptotically stable} if, for any initial state $x(0)$, $\lim_{k \to \infty} \|x(k)\|_2 = 0$.
\end{mydef}

\begin{mydef}
    For system \eqref{basic system} with $d(k) \neq 0$, the closed-loop system under the event-triggered condition \eqref{event-trigger condition} and control law \eqref{state-feedback controller} is said to be \emph{uniformly ultimately bounded (UUB)} in a compact set $\mathcal{S}$ if, for any initial state $x(0)$, there exists $T \geq 0$ such that $x(k) \in \mathcal{S}$ for all $k \geq T$.
\end{mydef}

\begin{mydef}
    Given a performance output $y$, the closed-loop system under the event-triggered condition \eqref{event-trigger condition} and control law \eqref{state-feedback controller} is said to have an $H_\infty$ norm less than $\gamma>0$ if, for zero initial state $x(0) = 0$ and any nonzero input $d \in l_2[0, +\infty)$, the output $y$ satisfies
    \begin{equation}
        \sum_{k=0}^\infty \|y(k)\|_2^2 \leq \gamma^2 \sum_{k=0}^\infty \|d(k)\|_2^2.
    \end{equation}
\end{mydef}

We now formally state the problems addressed in this paper.

\begin{myproblem}
    Given $\sigma_x, \sigma_u$, design the sparsest controller $K$ such that, for all $(A, B) \in \Sigma$, the closed-loop system with $d(k) = 0$ is asymptotically stable.
\end{myproblem}

\begin{myproblem}
    Given $\sigma_x, \sigma_u$ and an upper bound on $\|d(k)\|_2$, characterize the trade-off between the sparsity of the controller $K$ and the size of the UUB set $\mathcal{S}$, valid for all $(A, B) \in \Sigma$.
\end{myproblem}

\begin{myproblem}
    Given $\sigma_x, \sigma_u$, characterize the trade-off between the sparsity of the controller $K$ and the $H_\infty$ performance level $\gamma$, such that the $H_\infty$ norm of the closed-loop system is less than or equal to $\gamma$ for all $(A, B) \in \Sigma$.
\end{myproblem}

{\color{blue}The above formulations reveal an inherent coupling between communication and computation resources induced by the event-triggered mechanism and sparse control design, where event triggering affects transmission rates over the S2C and C2A channels, while controller sparsity determines internal computational and dataflow complexity.}

\section{Main Results}\label{Main Results}
Our main results consist of four parts: 1) the data collection mechanism and the construction of the uncertainty set $\Sigma$; 2) stability analysis in the absence of process noise; 3) UUB analysis with process noise; 4) $H_\infty$ performance analysis with process noise. Before presenting these results, we first introduce the lossy matrix S-lemma \cite{bisoffi2021trade} that will be frequently used in the rest of this paper.
\begin{mylem}[{\citep[Lemma 2]{bisoffi2021trade}}]\label{s procedure}
Let $T_0,\ldots,T_l\in \mathbb{R}^{(q+p)\times(q+p)}$ be symmetric matrices. If there exist scalars $\tau_1\ge0,\ldots,\tau_l\ge0$ such that $T_0-\sum_{i=1}^l\tau_iT_i\succeq0$, then \(\left[\begin{smallmatrix} I \\ Z \end{smallmatrix}\right]^\top T_0 \left[\begin{smallmatrix} I \\ Z \end{smallmatrix}\right] \succeq 0\) for all \( Z \in \mathbb{R}^{p \times q} \) such that \(\left[\begin{smallmatrix} I \\ Z \end{smallmatrix}\right]^\top T_i \left[\begin{smallmatrix} I \\ Z \end{smallmatrix}\right] \succeq 0\) for each \( i = 1, \ldots, \ell \).
\end{mylem}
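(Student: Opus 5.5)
The argument is a direct congruence computation. Fix any $Z \in \mathbb{R}^{p\times q}$ satisfying the $l$ hypotheses, and write
\[
M \triangleq \begin{bmatrix} I \\ Z \end{bmatrix} \in \mathbb{R}^{(q+p)\times q},
\]
which has full column rank. The goal is to show $M^\top T_0 M \succeq 0$.

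The first step uses the assumed multipliers. Since $T_0-\sum_{i=1}^l \tau_i T_i \succeq 0$, a congruence by $M$ preserves semidefiniteness: for every $v\in\mathbb{R}^q$, setting $w = Mv$ gives
\[
v^\top M^\top\Bigl(T_0-\sum_{i=1}^l\tau_iT_i\Bigr)Mv = w^\top\Bigl(T_0-\sum_{i=1}^l\tau_iT_i\Bigr)w \ge 0 .
\]
By linearity of congruence this yields
\[
M^\top T_0 M \succeq \sum_{i=1}^l \tau_i\, M^\top T_i M .
\]

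The second step uses the hypothesis on $Z$. Each $M^\top T_i M \succeq 0$, and each $\tau_i \ge 0$. A nonnegative combination of positive semidefinite matrices is positive semidefinite, since the cone $\mathbb{S}^q_+$ is closed under addition and nonnegative scaling. Hence
\[
\sum_{i=1}^l \tau_i\, M^\top T_i M \succeq 0 .
\]
Chaining this with the first step, and using transitivity of the Loewner order, gives $M^\top T_0 M \succeq 0$. Because $Z$ was arbitrary among matrices satisfying the constraints, the claim follows.

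None of these steps is a real obstacle; the only care needed is to use the Loewner order correctly, namely that congruence preserves $\succeq$ and that $\succeq$ is transitive. I would add a remark that this is precisely the "lossy" direction: it gives only a sufficient condition. The converse, which would guarantee the existence of the $\tau_i$, would require additional hypotheses such as a Slater-type condition and $l=1$, as in the exact matrix S-lemma. That converse is not needed here, since the paper only uses this lemma to certify robustness over the uncertainty set $\Sigma$.
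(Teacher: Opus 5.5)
Your proof is correct and is the standard congruence argument. The paper gives no proof of its own and simply cites \cite{bisoffi2021trade}; there the result follows exactly as you argue, by writing $\left[\begin{smallmatrix} I \\ Z \end{smallmatrix}\right]^\top T_0 \left[\begin{smallmatrix} I \\ Z \end{smallmatrix}\right]$ as the congruence of $T_0-\sum_i\tau_iT_i$ plus the nonnegative combination $\sum_i\tau_i\left[\begin{smallmatrix} I \\ Z \end{smallmatrix}\right]^\top T_i\left[\begin{smallmatrix} I \\ Z \end{smallmatrix}\right]$ (your full-column-rank remark on $M$ is harmless but unused).
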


\subsection{Data Collection}\label{Data Collection}
In this section, we present the data collection mechanism, which serves as the foundation for the subsequent analysis. Throughout this paper, we assume that $A_*$ and $B_*$ are unknown \textit{a priori}, and other system matrices are known. Since $(A_*, B_*)$ are unknown, direct control design is infeasible. In this work, we assume access to a sequence of measured system states and control inputs associated with \eqref{basic system}, both of which are subject to bounded measurement noise. For the purpose of data-driven analysis, \eqref{basic system} can be rewritten as 
\begin{equation}\label{data collection plant} x(k+1) = A_* x(k) + B_* u(k) + Ed(k), 
\end{equation} where the hat notation on $u$ is omitted to facilitate subsequent discussions. At each time step $k$, the measured state and input are given by
\begin{equation}
\begin{split}
    x_m(k) &= x(k) + \delta_x(k),\\
    u_m(k) &= u(k) + \delta_u(k),
\end{split}
\end{equation}
where the subscript `$m$' denotes measured quantities, and $\delta_x(k)$ and $\delta_u(k)$ are the state and input measurement noise, respectively.

We collect the measured data into the following matrices:
{\color{blue}
\begin{equation}\label{measured state and input}
\begin{aligned}
    X_m^+  &\triangleq \begin{bmatrix}x_m(1) &x_m(2) &\dots &x_m(T)\end{bmatrix} \in \mathbb{R}^{n_x \times T},\\
    X_m^- &\triangleq \begin{bmatrix}x_m(0) &x_m(1) &\dots &x_m(T-1)\end{bmatrix} \in \mathbb{R}^{n_x \times T},\\
    U_m^- &\triangleq \begin{bmatrix}u_m(0) &u_m(1) &\dots &u_m(T-1)\end{bmatrix} \in \mathbb{R}^{n_u \times T}.
\end{aligned}
\end{equation}}
We impose the following assumptions on the process and measurement noise during data collection.
\begin{myass}\label{process measurement noise bound assumption}
    The process noise $d(k)$ and measurement noise $\delta_x(k)$ and $\delta_u(k)$ are bounded as follows:
    \begin{subequations}
    \begin{align}
        &\|d(k)\|_2 \leq \epsilon_d, \quad \forall k = 0, \ldots, T-1,\label{process noise bound} \\
        &\|\delta_x(k)\|_2 \leq \epsilon_x, \quad \forall k = 0, \ldots, T, \\
        &\|\delta_u(k)\|_2 \leq \epsilon_u, \quad \forall k = 0, \ldots, T-1,
    \end{align}
    \end{subequations}
    for some known constants $\epsilon_d, \epsilon_x, \epsilon_u > 0$.
\end{myass}

By substituting the measured data into \eqref{data collection plant}, we obtain
\begin{equation}
\begin{split}
    x_m(k+1) - \delta_x(k+1) =\ &A_* (x_m(k) - \delta_x(k)) \\
    &+ B_* (u_m(k) - \delta_u(k)) + Ed(k).
\end{split}
\end{equation}
For notational simplicity, we drop the subscript $*$ and refer to all $(A, B)$ that can generate $(x_m(k), u_m(k), x_m(k+1))$ for some admissible noise sequences as \emph{all possible $(A, B)$ consistent with the data}. Rearranging the terms yields
\begin{equation}\label{compact way}
    \begin{bmatrix}
        I\\
        A^\top\\
        B^\top
    \end{bmatrix}^\top
    \begin{bmatrix}
        x_m(k+1)\\
        -x_m(k)\\
        -u_m(k)
    \end{bmatrix}
    =
    \begin{bmatrix}
        I\\
        A^\top\\
        B^\top
    \end{bmatrix}^\top
    G
    \begin{bmatrix}
        d(k)\\
        \delta_x(k)\\
        \delta_x(k+1)\\
        \delta_u(k)
    \end{bmatrix},
\end{equation}
where $G = \begin{bmatrix}
        \begin{smallmatrix}
        E & 0 & I & 0\\
        0 & -I & 0 & 0\\
        0 & 0 & 0 & -I
        \end{smallmatrix}
     \end{bmatrix}$. Following the approach in \cite{van2020noisy,van2023quadratic}, we characterize the set of all $(A, B)$ consistent with the data using a quadratic matrix inequality (QMI). The following lemma provides a useful result that facilitates this derivation.
{\color{blue}
\begin{mylem}\label{lemma:joint bound of process noise and measurement noise}
    If \cref{process measurement noise bound assumption} holds, we have the inequality:
    \begin{equation}\label{equation:joint bound of process noise and measurement noise}
        \begin{bmatrix}
            d(k)\\
            \delta_x(k)\\
            \delta_x(k+1)\\
            \delta_u(k)
        \end{bmatrix}
        \begin{bmatrix}
            d(k)\\
            \delta_x(k)\\
            \delta_x(k+1)\\
            \delta_u(k)
        \end{bmatrix}^\top
        \preceq \Pi,
    \end{equation}
    where \begin{equation*}\begin{split}
        \Pi = \mathrm{blkdiag}(&4\epsilon_d^2 I,\, 4\epsilon_x^2 I,\, 4\epsilon_x^2 I,\, 4\epsilon_u^2 I ).
        \end{split}\end{equation*}
    
\end{mylem}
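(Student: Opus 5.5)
Write the stacked noise vector as $w(k)\triangleq\left[\begin{smallmatrix} d(k)^\top & \delta_x(k)^\top & \delta_x(k+1)^\top & \delta_u(k)^\top\end{smallmatrix}\right]^\top$. Split it into its four block components embedded in the full space: $w(k)=\sum_{i=1}^4 w_i$, where $w_1=\left[\begin{smallmatrix} d(k)^\top & 0 & 0 & 0\end{smallmatrix}\right]^\top$, $w_2=\left[\begin{smallmatrix} 0 & \delta_x(k)^\top & 0 & 0\end{smallmatrix}\right]^\top$, and similarly for $w_3,w_4$. The plan is to bound the rank-one matrix $w(k)w(k)^\top$ in two steps. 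First, decouple the cross terms between blocks at the price of a factor $4$. Second, bound each diagonal block by its norm bound from \cref{process measurement noise bound assumption}.

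\textbf{Step 1 (decoupling).} Show that $w(k)w(k)^\top \preceq 4\sum_{i=1}^4 w_iw_i^\top$. Fix an arbitrary vector $z$ of compatible dimension. Then $z^\top w(k)w(k)^\top z=\bigl(\sum_{i=1}^4 z^\top w_i\bigr)^2$. By the Cauchy--Schwarz inequality in $\mathbb{R}^4$ (equivalently, convexity of the square), this is at most $4\sum_{i=1}^4 (z^\top w_i)^2 = z^\top\bigl(4\sum_i w_iw_i^\top\bigr)z$. Since $z$ is arbitrary, the matrix inequality follows. Alternatively, one can expand and use $w_iw_j^\top+w_jw_i^\top\preceq w_iw_i^\top+w_jw_j^\top$, which follows from $(w_i-w_j)(w_i-w_j)^\top\succeq 0$, summed over the six pairs.

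\textbf{Step 2 (blockwise bound).} For each $i$, $w_iw_i^\top$ is zero outside the $i$-th diagonal block. On that block it equals $vv^\top$ with $v$ one of $d(k),\delta_x(k),\delta_x(k+1),\delta_u(k)$. Since $vv^\top\preceq \|v\|_2^2 I$, because the largest eigenvalue of $vv^\top$ is $\|v\|_2^2$, \cref{process measurement noise bound assumption} gives $vv^\top\preceq \epsilon^2 I$ with $\epsilon\in\{\epsilon_d,\epsilon_x,\epsilon_x,\epsilon_u\}$ as appropriate. The index ranges in the assumption cover $k=0,\dots,T-1$ and $k+1\le T$, so all four bounds apply. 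Embedding these bounds block-diagonally yields $\sum_i w_iw_i^\top\preceq \mathrm{blkdiag}(\epsilon_d^2I,\epsilon_x^2I,\epsilon_x^2I,\epsilon_u^2I)$. Combining this with Step 1 gives $w(k)w(k)^\top\preceq\Pi$.

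There is no real obstacle here. The only point needing care is Step 1: the cross terms $w_iw_j^\top$ are indefinite, so they cannot simply be dropped. This is exactly why the factor $4$, the number of blocks, appears in $\Pi$. I will present it via the quadratic-form argument with Cauchy--Schwarz, which handles all cross terms at once.
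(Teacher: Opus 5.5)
Your proposal is correct and follows essentially the same route as the paper: both reduce to the quadratic form $z^\top w(k)w(k)^\top z=\bigl(\sum_i z_i^\top v_i\bigr)^2$, use $(\sum_{i=1}^4 a_i)^2\le 4\sum_i a_i^2$ to decouple the cross terms, and then bound each block by its norm bound from Assumption~\ref{process measurement noise bound assumption}. The only difference is presentation: you express the second step as the matrix inequality $vv^\top\preceq\|v\|_2^2 I$, whereas the paper applies Cauchy--Schwarz scalarly as $(z_i^\top v)^2\le\|z_i\|_2^2\|v\|_2^2$.
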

\noindent\emph{Proof:} See Appendix~\ref{proof of lemma:joint bound of process noise and measurement noise}.}

Applying \cref{lemma:joint bound of process noise and measurement noise} to \eqref{compact way}, we obtain a QMI that describes all $(A, B)$ consistent with the data. Specifically, for each $k$, we have
\begin{equation}
    \begin{bmatrix}
        I\\
        A^\top\\
        B^\top
    \end{bmatrix}^\top
    \Psi_k
    \begin{bmatrix}
        I\\
        A^\top\\
        B^\top
    \end{bmatrix}
    \succeq 0,
\end{equation}
where
\begin{equation}
    \Psi_k = G \Pi G^\top - \begin{bmatrix}
        x_m(k+1)\\
        -x_m(k)\\
        -u_m(k)
    \end{bmatrix}
    \begin{bmatrix}
        x_m(k+1)\\
        -x_m(k)\\
        -u_m(k)
    \end{bmatrix}^\top.
\end{equation}
The set of $(A, B)$ consistent with the entire measured trajectory is thus given by
\begin{equation}
    \Sigma = \bigcap_{k=0}^{T-1} \left\{ (A, B) \,\middle|\, \begin{bmatrix}
        I\\
        A^\top\\
        B^\top
    \end{bmatrix}^\top
    \Psi_k
    \begin{bmatrix}
        I\\
        A^\top\\
        B^\top
    \end{bmatrix}
    \succeq 0 \right\}.
\end{equation}
{\color{blue}
Next, we impose an assumption on the data matrix in \eqref{measured state and input}.
\begin{myass}\label{data richness assumption}
    $\begin{bmatrix} X_m^- \\ U_m^- \end{bmatrix}\begin{bmatrix} X_m^- \\ U_m^- \end{bmatrix}^\top \succeq \gamma I$ for some $\gamma > 0$.
\end{myass} 
\cref{data richness assumption} ensures that the collected data are sufficiently informative, and it can be verified directly from the measured data. In particular, this condition implies that $T \ge n_x + n_u$, and guarantees that $\Sigma$ is bounded when the measurement noise is sufficiently small. This result is formalized as follows.
\begin{mylem}\label{boundedness of uncertainty set}
    $\Sigma$ is bounded if \cref{data richness assumption} holds, and $\epsilon_x$ and $\epsilon_u$ satisfy $T \cdot \max(4\epsilon_x^2, 4\epsilon_u^2) \leq \gamma$.
\end{mylem}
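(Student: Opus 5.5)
The plan is to turn the family of QMIs defining $\Sigma$ into one scalar quadratic inequality in $Z \triangleq \begin{bmatrix} A & B\end{bmatrix}$ and read off a bound on $\|Z\|$. The equality case $T\max(4\epsilon_x^2,4\epsilon_u^2)=\gamma$ is permitted by the statement, and it is the obstacle: there the leading quadratic terms cancel and lower-order information must be used.

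\emph{Step 1 (per-sample inequality).} Write $w_k \triangleq \begin{bmatrix} x_m(k)^\top & u_m(k)^\top\end{bmatrix}^\top$ and $r_k \triangleq x_m(k+1) - Z w_k$. Since $\begin{bmatrix} I & A & B\end{bmatrix} G = \begin{bmatrix} E & -A & I & -B\end{bmatrix}$, membership of $(A,B)$ in the $k$-th set defining $\Sigma$ is equivalent to $r_k r_k^\top \preceq M(Z)$, where
\[
M(Z) \triangleq 4\epsilon_d^2 EE^\top + 4\epsilon_x^2 I + 4\epsilon_x^2 AA^\top + 4\epsilon_u^2 BB^\top .
\]
Summing over $k\in\mathbb{Z}_{[0,T)}$ with $W\triangleq\begin{bmatrix} X_m^- \\ U_m^-\end{bmatrix}$ gives
\[
(X_m^+ - ZW)(X_m^+ - ZW)^\top \preceq T\, M(Z).
\]

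\emph{Step 2 (scalarise and compare the quadratic terms).} Fix a unit vector $v$ and set $s\triangleq\|Z^\top v\|_2$ and $\rho \triangleq T\max(4\epsilon_x^2,4\epsilon_u^2)$. \cref{data richness assumption} gives $\|W^\top Z^\top v\|_2^2 \ge \gamma s^2$. The right-hand side satisfies $T v^\top M(Z) v \le c_0 + \rho s^2$, with $c_0 \triangleq T(4\epsilon_d^2\|E^\top v\|_2^2 + 4\epsilon_x^2)$. Expanding the left-hand square yields
\[
(\gamma-\rho)s^2 - 2\|W^\top\|\,\|X_m^{+\top}\|\, s \le c_0 .
\]
If $\rho<\gamma$, this bounds $s$ uniformly in $v$, so $\|Z\|$ is bounded and we are done.

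\emph{Step 3 (equality case $\rho=\gamma$, main obstacle).} Here the quadratic coefficient vanishes and the crude estimate above is too weak. I would argue by contradiction. Suppose $Z_j\in\Sigma$ with $t_j\triangleq\|Z_j\|_F\to\infty$, and set $Y_j \triangleq Z_j/t_j$, passing to a subsequence $Y_j\to Y$ with $\|Y\|_F=1$. Divide the per-sample inequalities $r_k r_k^\top\preceq M(Z_j)$ by $t_j^2$ and let $j\to\infty$. This gives
\[
(Y w_k)(Y w_k)^\top \preceq 4\epsilon_x^2 Y_AY_A^\top + 4\epsilon_u^2 Y_BY_B^\top \quad \text{for all } k .
\]
Taking traces, summing over $k$, and using $\gamma\|Y\|_F^2\le \mathrm{Tr}(YWW^\top Y^\top)\le\rho\|Y\|_F^2=\gamma\|Y\|_F^2$ forces every inequality in this chain to be tight. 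In particular:
\begin{itemize}
\item every row of $Y$ lies in the $\gamma$-eigenspace of $WW^\top$;
\item the nonzero blocks of $Y$ sit only on the $\epsilon$-component attaining the maximum;
\item each trace-tight PSD inequality $(Yw_k)(Yw_k)^\top\preceq N$ becomes an equality, so the right-hand side $N$ has rank at most one.
\end{itemize}

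I would then return to the unnormalised inequality $\mathrm{Tr}\big((X_m^+-Z_jW)(X_m^+-Z_jW)^\top\big)\le T\,\mathrm{Tr}\,M(Z_j)$ along the sequence. Its $t_j^2$ terms have a nonnegative net gap (by the Step 2 comparison), so feasibility for large $t_j$ forces the first-order term $-2t_j\mathrm{Tr}(Y_j W X_m^{+\top})$ to be nonpositive in the limit. The remaining $O(1)$ content is the strictly positive constant $T\,\mathrm{Tr}(4\epsilon_d^2EE^\top+4\epsilon_x^2 I)$. The goal is to combine this with the rank-one structure of $Y$ and the per-$k$ equalities to show that no such $Y$ can exist.

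I expect this to be the delicate point. If the first-order analysis does not close, I would try the per-$k$ QMIs rather than their sum: equality in the sum would have to hold simultaneously with the rank-one tightness for every $k$. I would then exploit that $M(Y)$ carries the full-rank terms $4\epsilon_x^2 I$ and $4\epsilon_d^2 EE^\top$ only at order zero. This should contradict rank-one saturation uniformly across the $T\ge n_x+n_u$ samples that span $\mathbb{R}^{n_x+n_u}$.
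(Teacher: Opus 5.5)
\paragraph{Review.} Your Steps 1 and 2 are correct, and together they already give a complete proof under the strict inequality $\rho\triangleq T\max(4\epsilon_x^2,4\epsilon_u^2)<\gamma$. This is essentially the paper's route. Your summation in Step 1 produces exactly the single QMI that defines the paper's superset $\Gamma\supseteq\Sigma$, and it proves the inclusion the paper only cites. Your scalar estimate $(\gamma-\rho)s^2-2\|W^\top\|\,\|X_m^{+\top}\|\,s\le c_0$ replaces the paper's appeal to boundedness of matrix ellipsoids, which it invokes once the quadratic block $\mathbf{A}=T\,\mathrm{blkdiag}(4\epsilon_x^2 I,4\epsilon_u^2 I)-WW^\top$ is negative definite.

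\paragraph{The gap: Step 3 is false.} The boundary case $\rho=\gamma$ cannot be proved, because the statement does not hold there. A counterexample:
\begin{itemize}
\item Take $n_x=n_u=n_d=1$, $E=1$, $\epsilon_x=\epsilon_u=\epsilon\le 1/2$ and $T=2$.
\item Take data $x_m(0)=x_m(1)=x_m(2)=2\epsilon$, $u_m(0)=1$, $u_m(1)=-1$.
\item Then $WW^\top=\mathrm{diag}(8\epsilon^2,2)\succeq\gamma I$ with $\gamma=8\epsilon^2=\rho$.
\item For $(A,B)=(t,0)$, each per-sample constraint $r_k^2\le M(Z)$ reads $4\epsilon^2(1-t)^2\le 4\epsilon_d^2+4\epsilon^2+4\epsilon^2t^2$, i.e.\ $-8\epsilon^2 t\le 4\epsilon_d^2$. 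This holds for every $t\ge 0$.
\end{itemize}
Hence $\{(t,0):t\ge 0\}\subseteq\Sigma$, so $\Sigma$ is unbounded. These data are even produced by an admissible stable plant: $A_*=1/2$, $B_*=0$, $d(k)\equiv\epsilon=\epsilon_d$, $x(k)\equiv 2\epsilon$, no measurement noise. The mechanism is that the QMI inflates the noise radius to $2\epsilon$. A measured state of size $2\epsilon$ is then compatible with a true state equal to zero, which carries no information about $A$.

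\paragraph{Where your contradiction strategy breaks.} The direction $Y=\begin{bmatrix}1&0\end{bmatrix}$ satisfies every tightness condition you derive in the limit. It is a $\gamma$-eigenvector of $WW^\top$, and $(Yw_k)^2=4\epsilon^2=4\epsilon_x^2Y_AY_A^\top+4\epsilon_u^2Y_BY_B^\top$ for each $k$. This holds even though the $w_k$ span $\mathbb{R}^2$, so spanning does not rule out rank-one saturation. Moreover, the first-order term $-2t\,\mathrm{Tr}(YWX_m^{+\top})=-16\epsilon^2 t$ is strictly negative. The positive constant $T\,\mathrm{Tr}(4\epsilon_d^2EE^\top+4\epsilon_x^2 I)$ you hope to exploit sits on the right-hand side of the inequality. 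It therefore only adds slack and can never produce a contradiction.

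\paragraph{The paper's proof and the repair.} The paper's own proof has the same defect: from $WW^\top\succeq\gamma I$ and $\rho\le\gamma$ it asserts $\mathbf{A}\prec 0$, but only $\mathbf{A}\preceq 0$ follows. The correct repair is to drop Step 3 and state the lemma with the strict inequality $T\max(4\epsilon_x^2,4\epsilon_u^2)<\gamma$. Under that hypothesis your Steps 1--2 are a complete and more self-contained proof. Your Step 2 estimate also goes through under the weaker condition $\mathbf{A}\prec 0$, with $\gamma-\rho$ replaced by $\lambda_{\min}(-\mathbf{A})$.
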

\noindent\emph{Proof:} See Appendix~\ref{proof of lemma:boundedness of uncertainty set}.

With the uncertainty set $\Sigma$ established, we can now proceed to solve \textit{Problems 1--3}.
}

\subsection{Stability Analysis in the Absence of Process Noise}
In this section, we address \textit{Problem 1}: given the event-triggered parameters $\sigma_x$ and $ \sigma_u$, our objective is to design a controller $K$ with maximal sparsity such that the closed-loop system is asymptotically stable for all $(A, B) \in \Sigma$. {\color{blue}From a networked control perspective, the parameters $\sigma_x$ and $\sigma_u$ determine the transmission behavior over the S2C and C2A channels, respectively, while the sparsity of $K$ reflects the computational and dataflow complexity of the controller}. Throughout this section, we assume the system is free of process noise, i.e., $d(k) = 0$.

Based on the event-triggered mechanism described in \cref{Event-trigger control}, the closed-loop system can be written as
\begin{equation}\label{closed-loop system problem 1}
\begin{split}
    x(k+1) &= A x(k) + B \hat{u}(k) \\
           &= A x(k) + B (u(k) - e_u(k)) \\
           &= A x(k) + B K (x(k) - e_x(k)) - B e_u(k) \\
           &= A x(k) + B K x(k) - B K e_x(k) - B e_u(k),
\end{split}
\end{equation}
where $x(k)$, $e_x(k)$, and $e_u(k)$ satisfy $\|e_x(k)\|_2 \leq \sigma_x \|x(k)\|_2$ and $\|e_u(k)\|_2 \leq \sigma_u \|u(k)\|_2$. {\color{blue}Here, $e_x(k)$ and $e_u(k)$ represent the errors induced by event-triggered transmissions, capturing the effect of reduced communication over the S2C and C2A channels}. These inequalities can be equivalently expressed as QMIs in terms of $\left[\begin{smallmatrix} x^\top & e_x^\top(k) & e_u^\top(k) \end{smallmatrix}\right]^\top$. Specifically, $\|e_x(k)\|_2 \leq \sigma_x \|x(k)\|_2$ is equivalent to
\begin{equation}\label{event-trigger QMI 1}
    \begin{bmatrix}
        x(k)\\
        e_x(k)\\
        e_u(k)
    \end{bmatrix}^\top
    \begin{bmatrix}
        \sigma_x^2 I & 0 & 0 \\
        0 & -I & 0 \\
        0 & 0 & 0
    \end{bmatrix}
    \begin{bmatrix}
        x(k)\\
        e_x(k)\\
        e_u(k)
    \end{bmatrix} \geq 0.
\end{equation}
Similarly, $\|e_u(k)\|_2 \leq \sigma_u \|u(k)\|_2$ can be rewritten as
\begin{equation}\label{event-trigger QMI 2}
    \begin{bmatrix}
        x(k)\\
        e_x(k)\\
        e_u(k)
    \end{bmatrix}^\top
    \begin{bmatrix}
        \begin{bmatrix} I \\ -I \end{bmatrix} \sigma_u^2 K^\top K \begin{bmatrix} I \\ -I \end{bmatrix}^\top & \begin{bmatrix} 0 \\ 0 \end{bmatrix} \\
        \begin{bmatrix} 0 & 0 \end{bmatrix} & -I
    \end{bmatrix}
    \begin{bmatrix}
        x(k)\\
        e_x(k)\\
        e_u(k)
    \end{bmatrix} \geq 0.
\end{equation}
We construct a Lyapunov function $V(k) = x^\top(k) P x(k)$ with $P \succ 0$. To ensure asymptotic stability, the Lyapunov function must satisfy $V(k+1) \le \beta V(k), \beta\in (0,1)$ for all $k \in \mathbb{N}$, i.e.,
\begin{equation}\label{lyapunov function decrease}
\begin{split}
    &(A x(k) + B K x(k) - B K e_x(k) - B e_u(k))^\top P (A x(k) \\
    &+ B K x(k) - B K e_x(k) - B e_u(k)) \le \beta x^\top(k) P x(k).
\end{split}
\end{equation}
{\color{blue}This condition ensures closed-loop stability despite the presence of transmission-induced errors arising from limited communication}. Note that the specific values of $A$, $B$, $x(k)$, $e_x(k)$, and $e_u(k)$ are unknown; the only information available is that $(A,B)\in\Sigma$ and $\left[\begin{smallmatrix}
    x^\top &e_x^\top(k) &e_u^\top(k)
\end{smallmatrix}\right]^\top$ satisfies \eqref{event-trigger QMI 1} and \eqref{event-trigger QMI 2}. Therefore, given $\sigma_x$ and $\sigma_u$, our objective is to design the sparsest controller $K$ such that \eqref{lyapunov function decrease} holds for $\forall(A,B)\in\Sigma$ and $\forall\left[\begin{smallmatrix}
    x^\top &e_x^\top(k) &e_u^\top(k)
\end{smallmatrix}\right]^\top$ satisfying \eqref{event-trigger QMI 1} and \eqref{event-trigger QMI 2}. Before addressing the sparsity aspect, it is natural to first ask whether there exists any feasible controller $K$ (not necessarily sparse) that achieves this objective. {\color{blue}The following result provides a sufficient condition, based on the measured data and the prescribed event-triggered parameters specifying the transmission rates over the S2C and C2A channels, for the existence of a stabilizing controller. This result also serves as a foundation for subsequently incorporating sparsity considerations, which are directly related to computational complexity.} {\color{blue}Before presenting the result, we define $\mathcal{T}(X) \triangleq \mathcal{B}_4(X)-\sum_{i=0}^{T-1}\theta_i\Psi_i$, $\Bar{\Omega}_i \triangleq 2\Bar{P}-\Bar{\alpha}_i I $, $\Bar{\Theta}_i \triangleq G^\top + G - \Bar{\alpha}_i I $, and $\Theta_i \triangleq G^\top + G - \alpha_i^{-1} I$ throughout this paper, where $i\in \mathbb{Z}$.}

\begin{mythr}\label{stability without noise theorem}
Consider system \eqref{basic system} with $d(k)=0$. Given the event-triggered parameters $\sigma_x,\sigma_u$ in \eqref{event-trigger condition} and $\beta\in (0,1)$, if there exist matrices $\Bar{P}\in\mathbb{R}^{n_x\times n_x}$, $L\in\mathbb{R}^{n_u\times n_x}$, $G\in\mathbb{R}^{n_u\times n_u}$, and scalars $\Bar{\alpha}_1 \geq 0$, $\Bar{\alpha}_2 \geq 0$, $\{\theta_i\}_{i=0,1,\ldots,T-1}$ such that the following semidefinite program (SDP) is feasible:

\begin{subequations}\label{stability without noise theorem eq}
    \begin{align}
    &\begin{aligned}
        &\begin{bmatrix}
            \mathcal{T}(\Bar{P}) &\mathcal{B}_1(\Bar{P},L,-G) &0\\
            * &\mathrm{blkdiag}(\beta \Bar{P}, \Bar{\Omega}_1, \Bar{\Theta}_2) &\mathcal{B}_2(L^\top,\Bar{P})\\
            * &* &\mathrm{blkdiag}(\frac{\Bar{\alpha}_2}{\sigma_u^2}I, \frac{\Bar{\alpha}_1}{\sigma_x^2}I)
        \end{bmatrix}\succeq0,
        \end{aligned} \label{stability without noise theorem eq1}\\
    & \Bar{P}\in \mathbb{S}^{n_x}_{++}, \quad \Bar{\alpha}_1\ge0, \quad \Bar{\alpha}_2\ge0, \quad \theta_0\ge0, \ldots, \theta_{T-1}\ge0, \label{stability without noise theorem eq2}
    \end{align}
\end{subequations}
then the closed-loop system \eqref{closed-loop system problem 1} is asymptotically stable under the controller $K = L\Bar{P}^{-1}$ with Lyapunov matrix $P = \Bar{P}^{-1}$.
\end{mythr}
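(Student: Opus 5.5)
The plan is to show that feasibility of \eqref{stability without noise theorem eq} implies the decrease condition \eqref{lyapunov function decrease}. The condition must hold for every $(A,B)\in\Sigma$ and every $\xi(k)\triangleq[x^\top(k)\ e_x^\top(k)\ e_u^\top(k)]^\top$ obeying \eqref{event-trigger QMI 1}--\eqref{event-trigger QMI 2}. Asymptotic stability then follows because $V(k)=x^\top(k)Px(k)$ satisfies $V(k+1)\le\beta V(k)$ with $\beta\in(0,1)$ and $P\succ0$, so $V(k)\le\beta^kV(0)\to0$.

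\emph{Step 1 (S-procedure in $\xi$).} Write $x(k+1)=M\xi(k)$ with $M=[A+BK,\ -BK,\ -B]$. A sufficient condition for \eqref{lyapunov function decrease} is the existence of multipliers $\alpha_1,\alpha_2\ge0$ such that
\begin{equation*}
\mathrm{blkdiag}(\beta P,0,0)-M^\top PM-\alpha_1\,\mathrm{blkdiag}(\sigma_x^2I,-I,0)-\alpha_2\Big(\sigma_u^2\begin{bmatrix}K^\top\\-K^\top\\0\end{bmatrix}\begin{bmatrix}K&-K&0\end{bmatrix}-\mathrm{blkdiag}(0,0,I)\Big)\succeq0 .
\end{equation*}
This is the scalar S-procedure applied to the quadratic constraints \eqref{event-trigger QMI 1}--\eqref{event-trigger QMI 2}.

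\emph{Step 2 (Schur complements and congruence).} Take Schur complements of this inequality with respect to the term $M^\top PM$ and the two $\sigma$-terms. This gives a larger matrix that is linear in $P^{-1}$, in $\alpha_1^{-1},\alpha_2^{-1}$, and in $K$; the diagonal blocks $\sigma_u^{-2}\alpha_2^{-1}$ and $\sigma_x^{-2}\alpha_1^{-1}$ arise here. Then apply a congruence with $\mathrm{blkdiag}(\bar P,\bar P,G,\dots)$, where $\bar P=P^{-1}$, and substitute $L=K\bar P$ and $\bar\alpha_i=\alpha_i^{-1}$ (up to scaling). The blocks for $x$ and $e_x$ then become $\beta\bar P$ and $\bar P\alpha_1\bar P$. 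The block for $e_u$ becomes $G^\top\alpha_2G$. The off-diagonal terms become $(A+BK)\bar P=A\bar P+BL$, $-BK\bar P=-BL$ and $-BG$.

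These products are nonconvex. I linearize them with the standard bounds $\bar P\bar\alpha_1^{-1}\bar P\succeq2\bar P-\bar\alpha_1I=\bar\Omega_1$ and $G^\top\bar\alpha_2^{-1}G\succeq G^\top+G-\bar\alpha_2I=\bar\Theta_2$, both of which follow from $(\bar P-\bar\alpha_1 I)^\top\bar\alpha_1^{-1}(\bar P-\bar\alpha_1I)\succeq0$ and its analogue for $G$. Replacing the true blocks by these smaller ones gives a sufficient condition. I will then check that the resulting coupling terms with the $\sigma$-blocks are exactly $\mathcal{B}_2(L^\top,\bar P)$.

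\emph{Step 3 (removing the unknown $(A,B)$).} The dependence on $(A,B)$ sits in the first block row through $[I\ A\ B]$ acting on the columns $\mathcal{B}_1(\bar P,L,-G)$. Indeed, $[I\ A\ B]\,\mathcal{B}_1(\bar P,L,-G)=[A\bar P+BL,\ -BL,\ -BG]$. A further Schur complement on the $P^{-1}$ block rewrites the requirement as a QMI of the form
\begin{equation*}
\begin{bmatrix}I\\ A^\top\\ B^\top\end{bmatrix}^\top\big(\mathcal{B}_4(\bar P)-\mathcal{B}_1\,\mathcal{N}^{-1}\mathcal{B}_1^\top\big)\begin{bmatrix}I\\ A^\top\\ B^\top\end{bmatrix}\succeq0,
\end{equation*}
where $\mathcal{N}$ is the Schur complement of the lower-right part of \eqref{stability without noise theorem eq1}. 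This form matches \cref{s procedure} with $Z=[A\ B]^\top$ and $T_i=\Psi_i$. Applying \cref{s procedure} with multipliers $\theta_i\ge0$, and then taking a Schur complement back, yields exactly the block $\mathcal{T}(\bar P)$ in \eqref{stability without noise theorem eq1}. Hence the condition holds for all $(A,B)\in\Sigma$.

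The main obstacle is the bookkeeping in Steps 2--3. I have to order the Schur complements and congruences so that the unknown $(A,B)$ appears only through $[I\ A\ B]$ in a single row. At the same time, the remaining nonconvex products $\bar P\alpha_1\bar P$ and $G^\top\alpha_2G$ must be replaced in the right direction, so that the LMI is sufficient rather than merely necessary. I would first derive the full matrix before linearization and verify the sign of each replacement. Only then would I apply \cref{s procedure}. This order keeps $\mathcal{N}\succ0$ (implied by positivity of its diagonal blocks and the feasibility of \eqref{stability without noise theorem eq1}), which the Schur complement steps require.
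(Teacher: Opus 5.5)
Your proposal is correct and follows the same route as the paper's proof. The ingredients are identical: the S-procedure on the two triggering QMIs, Schur complements, the congruence linking $(P,K)$ to $(\bar P,L,G)$, the bounds $\alpha_1P^{-1}P^{-1}\succeq 2P^{-1}-\alpha_1^{-1}I$ and $\alpha_2G^\top G\succeq G^\top+G-\alpha_2^{-1}I$, and \cref{s procedure} with the $\Psi_i$. The only difference is that you run the chain in the synthesis direction, whereas the paper starts from the LMI.

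One small correction: feasibility of \eqref{stability without noise theorem eq1} only gives $\mathcal N\succeq0$, not $\mathcal N\succ0$. Also, your congruence needs $G$ invertible, which holds when $\bar\alpha_2>0$ because then $G+G^\top\succ0$. The paper glosses over both points in the same way. You can avoid $\mathcal N^{-1}$ entirely in the $(A,B)$ step: apply a congruence with $\mathrm{blkdiag}([I\ A\ B]^\top,I)$ and use $\sum_i\theta_i[I\ A\ B]\Psi_i[I\ A\ B]^\top\succeq0$.
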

\textit{Proof}: See Appendix~\ref{proof of stability without noise theorem}.

{\color{blue}This result shows that closed-loop stability can be guaranteed under limited communication induced by event-triggering, without yet imposing sparsity constraints on the controller. Our objective, however, is to design the sparsest such controller, thereby reducing the computational and dataflow complexity while preserving stability under the given communication constraints.} According to the change of variables $K = L \Bar{P}^{-1}$ in \cref{stability without noise theorem}, the corresponding optimization problem is
\begin{equation}
    \begin{aligned}
        &\mathop{\min}_{L,\Bar{P}} \|L \Bar{P}^{-1}\|_0 \qquad \text{s.t.} \\
        &\eqref{stability without noise theorem eq1},\ \eqref{stability without noise theorem eq2}.
    \end{aligned}
\end{equation}
However, minimizing the objective function $\|L \Bar{P}^{-1}\|_0$ is NP-hard, and the dependence of $K$ on $L$ and $\Bar{P}$ complicates direct optimization. To address this, we reformulate the problem to express $K$ explicitly, avoiding the change of variables. Starting from \eqref{temp1 in proof of thr1}, which is directly implied by \eqref{stability without noise theorem eq1}, and applying only the inequality $\alpha_1 P^{-1} P^{-1} \succeq 2P^{-1} - \alpha_1^{-1} I$, we obtain

\begin{equation}\label{temp1}
    \begin{aligned}
        &\scalebox{0.97}{$\begin{bmatrix}
            \mathcal{T}(P^{-1}) &\mathcal{B}_1(P^{-1},L,-G) &0\\
            * &\Delta &\mathcal{B}_2(L^\top,P^{-1})\\
            * &* &\mathrm{blkdiag}(\frac{1}{\alpha_2\sigma_u^2}I, \frac{1}{\alpha_1\sigma_x^2}I)
        \end{bmatrix}\succeq0, $}\\
        & \Delta = \mathrm{blkdiag}(\beta P^{-1}, \alpha_1(P^{-1})^2, \Theta_2).
        \end{aligned} 
\end{equation}
Next, we substitute $L =K\Bar{P}=K P^{-1}$, pre- and post-multiply \eqref{temp1} by $\mathrm{blkdiag}(I, I, I, P, P, I, I, I)$ and its transpose, and apply the Schur complement. This yields a formulation in which $K$ appears explicitly. Incorporating the sparsity-promoting objective $\|K\|_0$, the resulting optimization problem can be summarized as follows:

\begin{subequations}\label{temp3}
    \begin{align}
        &\mathop{\min}_{P,K,G,\alpha_1,\Bar{\alpha}_2,\{\theta_i\}_{i=0,1,\ldots,T-1}} \|K\|_0 \qquad \text{s.t.} \\
        &\scalebox{0.97}{$\begin{bmatrix}
            \mathcal{T}(P^{-1}) &\mathcal{B}_1(I,K,-G) &0\\
            * &\mathrm{blkdiag}(\beta P-\alpha_1 \sigma_x^2 I, \alpha_1 I, \Bar{\Theta}_2) &\mathcal{B}_3(K^\top)\\
            * &* &\frac{\Bar{\alpha}_2}{\sigma_u^2}I
        \end{bmatrix}\succeq0$},\label{temp3 eq1}\\
    &P\in \mathbb{S}^{n_x}_{++}, \alpha_1\ge0, \Bar{\alpha}_2\ge0, \theta_0\ge0, \theta_1\ge0,\ldots, \theta_{T-1}\ge0.\label{temp3 eq2}
    \end{align}
\end{subequations}
However, directly solving Problem \eqref{temp3} presents two main challenges. First, the objective function $\|K\|_0$ is non-convex and NP-hard to optimize. Second, the constraint \eqref{temp3 eq1} is non-convex due to the presence of the matrix inverse $P^{-1}$. To address the first challenge, we adopt the reweighted $\ell_1$ minimization approach proposed by Cand{\`e}s et al. \cite{candes2008enhancing}, which provides a tractable and effective approximation to the $\ell_0$ norm. Unlike standard $\ell_1$ minimization, which applies uniform weights and may overly penalize large entries, the reweighted $\ell_1$ method iteratively updates the weights so that smaller entries are penalized more heavily, thereby promoting sparsity more effectively and yielding solutions that better approximate true $\ell_0$ minimization.
To address the second challenge, we linearize the non-convex term $P^{-1}$ around a given point $\Tilde{P}$ and adopt the inequality

\begin{equation}\label{linearization of P inverse}
    P^{-1}\succeq \Tilde{P}^{-1} - \Tilde{P}^{-1}(P-\Tilde{P})\Tilde{P}^{-1},
\end{equation}
which arises from the convexity of matrix inversion in the Loewner order \cite{bhatia2013matrix}. With these two modifications, we can reformulate Problem \eqref{temp3} as follows:

\begin{subequations}\label{real optimization problem of problem 1}
    \begin{align}
        &\mathop{\min}_{Y,P,K,G,\alpha_1,\Bar{\alpha}_2,\{\theta_i\}_{i=0,1,\ldots,T-1}} \mathrm{Tr}(W^\top K^{|\cdot|}) \qquad \text{s.t.} \nonumber\\
        &\begin{bmatrix}
            \mathcal{T}(Y) &\mathcal{B}_1(I,K,-G) &0\\
            * &\mathrm{blkdiag}(\beta P-\alpha_1 \sigma_x^2 I, \alpha_1 I, \Bar{\Theta}_2) &\mathcal{B}_3(K^\top)\\
            * &* &\frac{\Bar{\alpha}_2}{\sigma_u^2}I
        \end{bmatrix}\succeq0,\label{real optimization problem of problem 1 eq1}\\
    &Y \preceq \Tilde{P}^{-1} - \Tilde{P}^{-1}(P-\Tilde{P})\Tilde{P}^{-1},\label{real optimization problem of problem 1 eq2}\\
        &P\in \mathbb{S}^{n_x}_{++},\quad Y\in \mathbb{S}^{n_x}_{++},\\
        &\alpha_1\ge0,\quad \Bar{\alpha}_2\ge0,\quad \theta_0\ge0,\, \ldots,\, \theta_{T-1}\ge0,
    \end{align}
\end{subequations}
where $W$ is a given weight matrix. Note that due to the inequality \eqref{linearization of P inverse}, the feasible region of Problem \eqref{real optimization problem of problem 1} is contained within that of Problem \eqref{temp3}, which may cause infeasibility if a poor linearization point $\Tilde{P}$ is chosen. The following result provides guidance on how to select $\Tilde{P}$ by establishing the relationship among Problem \eqref{stability without noise theorem eq}, Problem \eqref{temp3}, and Problem~\eqref{real optimization problem of problem 1}.
\begin{mypro}\label{relationship between three problems}
    Given a feasible solution $(Y,P,K,G,\alpha_1,\Bar{\alpha}_2,\{\theta_i\}_{i=0,1,\ldots,T-1})$ to Problem \eqref{real optimization problem of problem 1}, then $(P,K,G,\alpha_1,\Bar{\alpha}_2,\{\theta_i\}_{i=0,1,\ldots,T-1})$ is feasible for Problem \eqref{temp3}. Conversely, given a feasible solution $(P,K,G,\alpha_1,\Bar{\alpha}_2,\{\theta_i\}_{i=0,1,\ldots,T-1})$ to Problem \eqref{temp3}, setting $\Tilde{P} = P$ ensures that Problem \eqref{real optimization problem of problem 1} is feasible. Furthermore, given a feasible solution $(\Bar{P},L,G,\Bar{\alpha}_1,\Bar{\alpha}_2,\{\theta_i\}_{i=0,1,\ldots,T-1})$ to Problem \eqref{stability without noise theorem eq}, setting $\Tilde{P} = \Bar{P}^{-1}$ guarantees that Problem~\eqref{real optimization problem of problem 1} is feasible.
\end{mypro}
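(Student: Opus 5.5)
The proposition has three claims, and I would prove them in order. Each rests on a monotonicity observation: the $(1,1)$ block $\mathcal{T}(\cdot)=\mathcal{B}_4(\cdot)-\sum_i\theta_i\Psi_i$ depends on its argument only through the top-left corner $\mathcal{B}_4(\cdot)$, and it does so linearly and monotonically. So replacing $Y$ by any $Z\succeq Y$ in \eqref{real optimization problem of problem 1 eq1} adds the positive semidefinite matrix $\mathrm{blkdiag}(\mathcal{B}_4(Z-Y),0,0)$ to the left-hand side, which preserves $\succeq0$.

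\emph{First claim (Problem \eqref{real optimization problem of problem 1} feasible $\Rightarrow$ Problem \eqref{temp3} feasible).} Start from a feasible tuple $(Y,P,\dots)$. By \eqref{real optimization problem of problem 1 eq2} and the Loewner convexity inequality \eqref{linearization of P inverse}, we have $Y\preceq \Tilde{P}^{-1}-\Tilde{P}^{-1}(P-\Tilde{P})\Tilde{P}^{-1}\preceq P^{-1}$. By the monotonicity observation, replacing $Y$ by $P^{-1}$ keeps the LMI valid, and this gives \eqref{temp3 eq1}. The sign constraints carry over unchanged, which yields \eqref{temp3 eq2}.

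\emph{Second claim (Problem \eqref{temp3} feasible and $\Tilde P=P$ $\Rightarrow$ Problem \eqref{real optimization problem of problem 1} feasible).} With $\Tilde{P}=P$, the right-hand side of \eqref{real optimization problem of problem 1 eq2} equals exactly $P^{-1}$. Choosing $Y=P^{-1}\in\mathbb{S}^{n_x}_{++}$ makes \eqref{real optimization problem of problem 1 eq2} hold with equality, and \eqref{real optimization problem of problem 1 eq1} becomes identical to \eqref{temp3 eq1}. The remaining constraints are shared by the two problems.

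\emph{Third claim.} This is the main step. I would reuse the derivation stated just before \eqref{temp3}, and the point to check is that it produces a genuine feasible point rather than just an implication. Given a feasible $(\Bar P,L,G,\Bar\alpha_1,\Bar\alpha_2,\{\theta_i\})$ for \eqref{stability without noise theorem eq}, set $P=\Bar P^{-1}$, $K=L\Bar P^{-1}$, and $\alpha_1=\Bar\alpha_1^{-1}$. If $\Bar\alpha_1=0$, I would first perturb $\Bar\alpha_1$ to a small positive value, or argue that the $\Bar\alpha_1/\sigma_x^2$ block forces the associated off-diagonal block to vanish. Keep $G$, $\Bar\alpha_2$, and $\theta_i$ unchanged.

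The derivation then runs as follows. The inequality $\Bar\alpha_1^{-1}\Bar P^2\succeq 2\Bar P-\Bar\alpha_1 I$ follows from $(\Bar P-\Bar\alpha_1 I)^2\succeq0$. Since $\Bar\Omega_1$ sits in a diagonal block, replacing it by the larger matrix $\alpha_1(P^{-1})^2$ preserves positive semidefiniteness, and this gives \eqref{temp1}. Here the $\Bar\Theta_2$ and $\Bar\alpha_2$ blocks are left as in \eqref{stability without noise theorem eq1}, consistent with \eqref{temp3 eq1}. Next, a congruence with $\mathrm{blkdiag}(I,I,I,P,P,I,I,I)$ turns $L$ into $K$ and $P^{-1}$ into $I$ in the off-diagonal blocks. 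A Schur complement on the $\frac{1}{\alpha_1\sigma_x^2}I$ block then produces the $\beta P-\alpha_1\sigma_x^2 I$ entry, which gives \eqref{temp3 eq1}. This shows $(P,K,G,\alpha_1,\Bar\alpha_2,\{\theta_i\})$ is feasible for \eqref{temp3}. The second claim, applied with $\Tilde P=P=\Bar P^{-1}$, then completes the proof.

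The block bookkeeping in the congruence and Schur-complement step is where I expect the most care to be needed. In particular, I need to confirm that the $\mathcal{B}_2$ column associated with $\sigma_x$ is exactly the one eliminated.
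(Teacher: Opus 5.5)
Your proposal is correct and follows the paper's proof. Claim 1 comes from $Y\preceq \tilde P^{-1}-\tilde P^{-1}(P-\tilde P)\tilde P^{-1}\preceq P^{-1}$ together with monotonicity of $\mathcal{T}$, and claim 2 from taking $Y=P^{-1}$. Claim 3 maps the feasible point of \eqref{stability without noise theorem eq} to $(\bar P^{-1},L\bar P^{-1},G,\bar\alpha_1^{-1},\bar\alpha_2,\{\theta_i\})$ for \eqref{temp3} and then invokes claim 2. Your bookkeeping checks out: the column $[I;0;0]$ paired with $\frac{1}{\alpha_1\sigma_x^2}I$ is the one eliminated, which leaves $\mathcal{B}_3(K^\top)$.

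On the edge case, drop the perturbation idea, since raising $\bar\alpha_1$ also shrinks $\bar\Omega_1=2\bar P-\bar\alpha_1 I$. Instead note that $\bar\alpha_1=0$ is simply infeasible, because the off-diagonal block paired with $\frac{\bar\alpha_1}{\sigma_x^2}I$ is $\bar P\succ0$.
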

\textit{Proof}: See Appendix~\ref{proof of relationship between three problems}.

\begin{algorithm}[t]
    \caption{Iterative algorithm to solve \textit{Problem 1}.}
    \textbf{Output:} $K^*$;\\
    Solve Problem \eqref{stability without noise theorem eq} and initialize $P_0 = \Bar{P}^{-1}$, $W_0 = \mathbf{1}_{n_u \times n_x}$, $\epsilon_r > 0$, $k = 0$;\\
    \Repeat{\text{convergence or the maximum iteration reached}}{
        - Solve Problem \eqref{real optimization problem of problem 1} with $\Tilde{P} = P_k$, $W = W_k$;\\
        - Obtain the solutions $K_{k+1}, P_{k+1}$;\\
        - $W_{k+1} = (K_{k+1}^{|\cdot|} + \epsilon_r \cdot \mathbf{1}_{n_u \times n_x})^{\circ -1}$;\\
        - $k = k + 1$;
    }
    \Return $K_k$.
    \label{algorithm 1}
\end{algorithm} 
Proposition~\ref{relationship between three problems} motivates an iterative algorithm to solve Problem \eqref{temp3} by repeatedly solving Problem \eqref{real optimization problem of problem 1}, initialized with $\Tilde{P}$ obtained from Problem \eqref{stability without noise theorem eq}. The complete procedure is summarized in Algorithm~\ref{algorithm 1}\footnote{$\epsilon_r$ is a small positive constant (e.g., $10^{-6}$) to avoid division by zero.}. {\color{blue}The resulting procedure can be interpreted as a co-design strategy that explicitly  balances communication efficiency (through event-triggering) and computational complexity (through controller sparsity), while ensuring closed-loop stability.}

\subsection{UUB Analysis with Bounded Process Noise}
In this section, we address \textit{Problem 2}. When process noise is present, i.e., $d(k) \neq 0$, asymptotic stability is no longer achievable in general. Instead, the objective shifts to ensuring that the closed-loop system is UUB for all $(A, B) \in \Sigma$. In this context, the size of the UUB set $\mathcal{S}$, as defined in \eqref{bounded set}, serves as a measure of closed-loop performance. Our objective in this section is to investigate the trade-off between the size of $\mathcal{S}$ and the sparsity of the controller $K$, given the event-triggered parameters $\sigma_x$ and $\sigma_u$. {\color{blue}From a networked control perspective, the combined effect of event-triggered communication (characterized by $\sigma_x$ and $\sigma_u$), external disturbances, and controller sparsity introduces a fundamentally different trade-off compared to the noise-free case. Specifically, reduced communication increases transmission-induced errors, disturbances enlarge the ultimate bound, and sparsity further limits achievable performance due to reduced control flexibility.}

Based on the derivation in the previous section, the closed-loop system can be written as
\begin{equation}\label{closed-loop system problem 2}
    \scalebox{0.96}{$
    \begin{aligned}
        x(k+1) = A x(k) + B K x(k) - B K e_x(k) - B e_u(k) + E d(k),
    \end{aligned}
    $}
\end{equation}
where $d(k)$ denotes the process noise, which is assumed to be bounded such that $\|d(k)\|_2 \leq \epsilon_d$ for all $k \geq 0$. This assumption is consistent with the bound specified in \eqref{process noise bound} during data collection. The variables $x(k)$, $e_x(k)$, and $e_u(k)$ satisfy \eqref{event-trigger QMI 1} and \eqref{event-trigger QMI 2}. We consider the Lyapunov function $V(k) = x^\top(k) P x(k)$ with $P \succ 0$, and define the UUB set as
\begin{equation}\label{bounded set}
    \mathcal{S} \triangleq \left\{ x \in \mathbb{R}^{n_x} : x^\top P x \leq 1 \right\}.
\end{equation}

Clearly, the set $\mathcal{S}$ depends on the choice of $P$, which is determined by the controller $K$. The validity of the UUB set requires the following two conditions: (i) $V(k+1) \le \beta V(k), \beta\in (0,1)$ whenever $V(k) \geq 1$; and (ii) $V(k+1) \leq 1$ whenever $V(k) \leq 1$.

Since $A$, $B$, $x(k)$, $e_x(k)$, $e_u(k)$, and $d(k)$ are unknown, we can guarantee that $\mathcal{S}$ is a valid UUB set by ensuring conditions (i) and (ii) hold for $\forall(A, B) \in \Sigma$, $\forall d(k)$ satisfying $\|d(k)\|_2 \leq \epsilon_d$, and $\forall\left[\begin{smallmatrix} x^\top & e_x^\top(k) & e_u^\top(k) \end{smallmatrix}\right]^\top$ satisfying \eqref{event-trigger QMI 1} and \eqref{event-trigger QMI 2}. {\color{blue}Before incorporating sparsity into the design, we first characterize the existence of a UUB-stabilizing controller based solely on data and prescribed triggering parameters. This result serves as the foundation for the subsequent co-design with sparsity constraints.
}

\begin{mythr}\label{stability with noise theorem}
    Consider system \eqref{basic system} with $d(k)$ satisfying $\|d(k)\|_2\leq \epsilon_d$ for all $k\ge0$. Given the event-triggered parameters $\sigma_x,\sigma_u$ in \eqref{event-trigger condition} and $\beta\in (0,1)$, suppose there exist matrices $\Bar{P}\in\mathbb{R}^{n_x\times n_x}$, $L\in\mathbb{R}^{n_u\times n_x}$, $G\in\mathbb{R}^{n_u\times n_u}$, and scalars $\alpha_1$, $\Bar{\alpha}_2$, $\Bar{\alpha}_3$, $\Bar{\alpha}_4$, $\{\theta_i\}_{i=0,1,\ldots,T-1}$ such that the following holds:

\begin{subequations}\label{stability with noise theorem eq}
    \begin{align}
    &\begin{aligned}
        &\scalebox{0.95}{$\begin{bmatrix}
            \mathcal{T}(\Bar{P}-\Bar{\alpha}_2EE^\top) &\mathcal{B}_1(\Bar{P},L,-G) &0\\
            * & \Delta &\mathcal{B}_2(L^\top,\Bar{P})\\
            * &* &\mathrm{blkdiag}(\frac{\Bar{\alpha}_4}{\sigma_u^2}I, \frac{\Bar{\alpha}_3}{\sigma_x^2}I)
        \end{bmatrix}\succeq0,$}\\
        & \Delta = \mathrm{blkdiag}((\beta-\alpha_1) \Bar{P}, \Bar{\Omega}_3, \Bar{\Theta}_4),
        \end{aligned} \label{stability with noise theorem eq1}\\
    & \Bar{P}\in \mathbb{S}^{n_x}_{++}, 1\ge\alpha_1\ge0, \Bar{\alpha}_2\ge0, \Bar{\alpha}_3\ge0, \Bar{\alpha}_4\ge0, \Bar{\alpha}_2\ge\frac{\epsilon_d^2}{\alpha_1},  \label{stability with noise theorem eq2}\\ 
        &\theta_0\ge0, \theta_1\ge0,\ldots, \theta_{T-1}\ge0,\label{stability with noise theorem eq3}
    \end{align}
\end{subequations}
then the closed-loop system \eqref{closed-loop system problem 2} is UUB in the set $\mathcal{S}$ defined in \eqref{bounded set}, under the controller $K=L\Bar{P}^{-1}$ and Lyapunov matrix $P=\Bar{P}^{-1}$.
\end{mythr}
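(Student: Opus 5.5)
The plan follows the structure the paper used for \cref{stability without noise theorem}. The single condition to establish is the dissipation-type inequality
\begin{equation*}
V(k+1)-\beta V(k)\le -\alpha_1\big(V(k)-1\big)\quad\text{i.e.}\quad V(k+1)\le(\beta-\alpha_1)V(k)+\alpha_1,
\end{equation*}
required for all $(A,B)\in\Sigma$, all $d(k)$ with $\|d(k)\|_2\le\epsilon_d$, and all $(x,e_x,e_u)$ satisfying \eqref{event-trigger QMI 1}--\eqref{event-trigger QMI 2}. Both UUB conditions follow from it.
\begin{itemize}
\item For condition (i): if $V(k)\ge1$, then $V(k+1)\le\beta V(k)-\alpha_1(V(k)-1)\le\beta V(k)$.
\item For condition (ii): if $V(k)\le1$, then $V(k+1)\le(\beta-\alpha_1)V(k)+\alpha_1\le\beta-\alpha_1+\alpha_1<1$, using $\alpha_1\le1$ so that $\beta-\alpha_1\le1$. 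If $\beta<\alpha_1$, the coefficient is negative and the bound $V(k+1)\le\alpha_1\le1$ holds directly.
\end{itemize}
Geometric decay while outside $\mathcal S$ then gives entry into $\mathcal S$ in finite time, and invariance keeps the state there. This yields UUB in $\mathcal S$.

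To handle the noise term $Ed(k)$, I would use $\alpha_1\ge \alpha_1\|d\|^2/\epsilon_d^2$. It therefore suffices to show that
\begin{equation*}
(\beta-\alpha_1)x^\top Px-\xi^\top P\xi+\tfrac{\alpha_1}{\epsilon_d^2}d^\top d\ge0,\qquad \xi=(A+BK)x-BKe_x-Be_u+Ed .
\end{equation*}
Since $\bar\alpha_2\ge\epsilon_d^2/\alpha_1$, the weight $\alpha_1/\epsilon_d^2$ can be replaced by $\bar\alpha_2^{-1}$; the case $\alpha_1=0$ is excluded by the constraint. Maximizing over $d$ then bounds $\xi^\top P\xi-\bar\alpha_2^{-1}d^\top d$ by $\zeta^\top (P^{-1}-\bar\alpha_2EE^\top)^{-1}\zeta$, where $\zeta=(A+BK)x-BKe_x-Be_u$. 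This explains the term $\mathcal T(\bar P-\bar\alpha_2EE^\top)$ in \eqref{stability with noise theorem eq1}: its $(1,1)$ block being positive semidefinite after subtracting the data terms forces $\bar P-\bar\alpha_2EE^\top\succ0$ in the relevant directions. The remaining inequality is exactly the noise-free inequality of \cref{stability without noise theorem}, with $\beta$ replaced by $\beta-\alpha_1$ and $\bar P$ replaced by $\bar P-\bar\alpha_2EE^\top$ in the ``next-state'' block.

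From this reduction I would retrace the proof of \cref{stability without noise theorem}.
\begin{itemize}
\item Apply an S-procedure with multipliers $\alpha_3=\bar\alpha_3^{-1}$ and $\alpha_4=\bar\alpha_4^{-1}$ for the triggering QMIs \eqref{event-trigger QMI 1}--\eqref{event-trigger QMI 2}.
\item Take Schur complements to move $P^{-1}$, $\sigma_x^2$ and $\sigma_u^2K^\top K$ into the off-diagonal blocks $\mathcal B_2(L^\top,\bar P)$ and $\mathrm{blkdiag}(\bar\alpha_4\sigma_u^{-2}I,\bar\alpha_3\sigma_x^{-2}I)$.
\item Use the congruence by $\mathrm{blkdiag}(I,\bar P,\bar P,\ldots)$ with $L=K\bar P$.
\item Use the bounds $\bar\alpha\bar P^{-2}\ldots$, giving $\bar\Omega_3=2\bar P-\bar\alpha_3I\preceq \bar\alpha_3^{-1}\bar P^2$, and $G^\top+G-\bar\alpha_4 I\preceq G^\top \bar\alpha_4^{-1}G$, to pass from the LMI to the nonlinear inequality.
\item Isolate the unknown $[I\ A\ B]^\top$-dependence into a QMI $[I;A^\top;B^\top]^\top(\cdot)[I;A^\top;B^\top]\succeq0$ and invoke \cref{s procedure} with the multipliers $\theta_i$ against the data QMIs $\Psi_i$, which certifies it for all $(A,B)\in\Sigma$.
\end{itemize}

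The main obstacle is the bookkeeping that ties these steps together. The elimination of $d$ must be arranged so that the resulting inverse $(\bar P-\bar\alpha_2EE^\top)^{-1}$ appears precisely in the slot where \cref{s procedure} is applied, that is, in the first block of $\mathcal T(\cdot)$. I would first try to write the full quadratic form in $(x,e_x,e_u,d)$ with $A,B$ entering linearly through $[I;A^\top;B^\top]$. I would then take a Schur complement on the $P^{-1}$ block before removing $d$, so that $d$ contributes the additive $-\bar\alpha_2EE^\top$ to the $P^{-1}$ block, and only afterwards apply the matrix S-lemma as in the noise-free case.
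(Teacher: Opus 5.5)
Your proposal is correct and follows the paper's proof. It uses the same substitutions $P=\bar P^{-1}$, $\alpha_j=\bar\alpha_j^{-1}$, the bounds on $\bar\Omega_3$ and $\bar\Theta_4$, congruence and Schur complements, the matrix S-lemma with multipliers $\theta_i$ acting on the block $\bar P-\bar\alpha_2EE^\top$, and reinsertion of $d$ through the matrix inversion lemma. The only difference is at the end: you read off the single inequality $V(k+1)\le(\beta-\alpha_1)V(k)+\alpha_1$ and obtain (i) and (ii) by arithmetic. The paper instead evaluates the same augmented inequality in $[1;x;e_x;e_u;d]$ twice via the lossy S-lemma, with multiplier $\alpha_1$ on $V\ge1$ for (i), and with multiplier $\beta-\alpha_1$ on $V\le1$ plus the slack $1-\beta>0$ for (ii); this is equivalent but less direct.

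One side remark is misjustified. The $(1,1)$ block of $\mathcal T(\bar P-\bar\alpha_2EE^\top)$ does not force $\bar P-\bar\alpha_2EE^\top\succ0$, because $\sum_i\theta_i[\Psi_i]_{11}$ contains $-\theta_i x_m(i+1)x_m(i+1)^\top$ and is generally indefinite. Semidefiniteness follows instead from the S-lemma conclusion evaluated at $(A_*,B_*)\in\Sigma$. Strictness is needed for your closed-form maximization over $d$ (and tacitly for the paper's inversion lemma), but you can avoid it altogether with the Schur-complement route of your last paragraph: take Schur complements on $\bar\alpha_2^{-1}I$ and then on $P^{-1}$.
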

\textit{Proof}: See Appendix~\ref{proof of stability with noise theorem}.

Problem \eqref{stability with noise theorem eq} is not a standard semidefinite program (SDP) due to the presence of $(\beta-\alpha_1)\Bar{P}$ and $\Bar{\alpha}_2\ge\frac{\epsilon_d^2}{\alpha_1}$. To address this, we propose fixing $\alpha_1$ throughout this section, which, while potentially introducing conservativeness, renders Problem \eqref{stability with noise theorem eq} convex and amenable to existing SDP solvers. The choice of $\alpha_1$ directly influences the size of the UUB set $\mathcal{S}$, and thus the closed-loop performance. We now discuss how to select an appropriate value for $\alpha_1$.

Recall the definition of the UUB set $\mathcal{S}$ in \eqref{bounded set}. Intuitively, the size of $\mathcal{S}$ decreases as $P$ increases, and our objective is to minimize the size of $\mathcal{S}$. From the structure of \eqref{stability with noise theorem eq1} and $P^{-1}-\frac{\epsilon_d^2}{\alpha_1}EE^\top \succeq P^{-1}-\Bar{\alpha}_2EE^\top$, it follows that a small $\alpha_1$ can possibly lead to small $P$, and thus degrade performance by enlarging $\mathcal{S}$. The following result characterizes the feasibility of Problem \eqref{stability with noise theorem eq} as a function of $\alpha_1$.

\begin{mypro}\label{monotonicity pro}
    If Problem \eqref{stability with noise theorem eq} is feasible for $\alpha_1 = c$, then it is also feasible for all $\alpha_1 < c$.
\end{mypro}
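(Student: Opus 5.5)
The plan is to exploit the fact that, apart from the products involving $\alpha_1$, the matrix inequality \eqref{stability with noise theorem eq1} is \emph{homogeneous of degree one} in the remaining decision variables. Starting from a feasible point at $\alpha_1=c$, we will rescale all the other variables by a common factor $s>0$ and then lower $\alpha_1$ from $c$ to a prescribed $c'<c$. Throughout, we take $0<c'<c\le 1$; $c'>0$ is needed so that the constraint $\Bar{\alpha}_2\ge \epsilon_d^2/\alpha_1$ is meaningful.

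\textbf{Step 1: homogeneity.} Let $(\Bar{P},L,G,\Bar{\alpha}_2,\Bar{\alpha}_3,\Bar{\alpha}_4,\{\theta_i\})$ be feasible with $\alpha_1=c$. We check that every block of \eqref{stability with noise theorem eq1} is linear in $(\Bar{P},L,G,\Bar{\alpha}_2,\Bar{\alpha}_3,\Bar{\alpha}_4,\{\theta_i\})$ when $\alpha_1$ is held fixed:
\begin{itemize}
\item $\mathcal{T}(\Bar{P}-\Bar{\alpha}_2EE^\top)=\mathcal{B}_4(\Bar{P}-\Bar{\alpha}_2EE^\top)-\sum_i\theta_i\Psi_i$ is linear, since the $\Psi_i$ are fixed data;
\item $\mathcal{B}_1(\Bar{P},L,-G)$ and $\mathcal{B}_2(L^\top,\Bar{P})$ are linear;
\item $(\beta-\alpha_1)\Bar{P}$, $\Bar{\Omega}_3=2\Bar{P}-\Bar{\alpha}_3I$ and $\Bar{\Theta}_4=G^\top+G-\Bar{\alpha}_4I$ are linear;
\item $\mathrm{blkdiag}(\Bar{\alpha}_4\sigma_u^{-2}I,\Bar{\alpha}_3\sigma_x^{-2}I)$ is linear.
\end{itemize}
Hence, for any $s>0$, multiplying all these variables by $s$ multiplies the whole matrix in \eqref{stability with noise theorem eq1} by $s$, so it remains positive semidefinite with $\alpha_1=c$. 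The sign constraints and $s\Bar{P}\succ0$ are also preserved.

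\textbf{Step 2: lower $\alpha_1$.} Replacing $\alpha_1=c$ by $c'$ in the scaled matrix changes only the block $(\beta-\alpha_1)s\Bar{P}$. That block increases by $(c-c')s\Bar{P}\succeq0$, and this increment sits on a diagonal block. Adding a positive semidefinite block-diagonal term preserves $\succeq0$, so \eqref{stability with noise theorem eq1} holds with $\alpha_1=c'$. The constraint $1\ge c'\ge0$ is immediate.

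\textbf{Step 3: the coupling constraint.} The only remaining difficulty is $\Bar{\alpha}_2\ge\epsilon_d^2/\alpha_1$. Its right-hand side grows as $\alpha_1$ decreases, and naively enlarging $\Bar{\alpha}_2$ alone would spoil the $\mathcal{T}$ block. This is exactly why we rescale everything. We choose $s=c/c'\ge1$. Then
\[
s\Bar{\alpha}_2\ \ge\ \frac{c}{c'}\cdot\frac{\epsilon_d^2}{c}\ =\ \frac{\epsilon_d^2}{c'},
\]
which is the required constraint at $\alpha_1=c'$. Collecting Steps 1--3, the tuple $(s\Bar{P},sL,sG,c',s\Bar{\alpha}_2,s\Bar{\alpha}_3,s\Bar{\alpha}_4,\{s\theta_i\})$ is feasible for Problem \eqref{stability with noise theorem eq}.

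We note that the new Lyapunov matrix is $P=(s\Bar{P})^{-1}=\frac{c'}{c}\Bar{P}^{-1}$. It is smaller, so the UUB set $\mathcal{S}$ is larger. This is consistent with the remark that a small $\alpha_1$ degrades performance.
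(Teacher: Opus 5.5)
Your proof is correct and takes the same route as the paper: scale every remaining variable by $s=c/c'$ (the paper's factor $n$), use homogeneity, and use $(\beta-c')s\Bar{P}\succeq(\beta-c)s\Bar{P}$. Your Step 3 explicitly checks that this particular scaling restores the coupling constraint $\Bar{\alpha}_2\ge\epsilon_d^2/\alpha_1$. That check is exactly why rescaling is needed, and the paper's one-line proof leaves it implicit.
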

\textit{Proof}: See Appendix~\ref{proof of monotonicity pro}.
\begin{algorithm}[t]
    \caption{Iterative algorithm to solve \textit{Problem 2}.}
\textbf{Output:} $K^*,P^*$;\\
    Find the maximal $\alpha_1\in[0,1]$ such that Problem \eqref{stability with noise theorem eq} is feasible, and fix this $\alpha_1$ throughout this algorithm;\\
    With such a $\alpha_1$, solve Problem \eqref{stability with noise theorem eq} and initialize $P_0=\Bar{P}^{-1}, W_0=\mathbf{1}_{n_u\times n_x}, \epsilon_r>0,\lambda>0,k=0$;\\
    \Repeat{\text{convergence or the maximum iteration reached\vspace{5pt}}}{
    - Solve Problem \eqref{real optimization problem of problem 2} with $\Tilde{P} = P_k, W=W_k$;\\
    - Obtain the solution $K_{k+1}, P_{k+1}$;\\
    - $W_{k+1} = (K_{k+1}^{|\cdot|}+\epsilon_r \cdot \mathbf{1}_{n_u\times n_x})^{\circ -1}$;\\
    - $k = k + 1$;
    }
\Return $K_k,P_k$.
\label{algorithm 2}
\end{algorithm} 
\begin{mycor}
    There exists a scalar $c_{\max} \in [0,1]$ such that Problem \eqref{stability with noise theorem eq} is feasible for all $\alpha_1 \in (0, c_{\max}]$ and infeasible for all $\alpha_1 \in (c_{\max}, 1]$.
\end{mycor}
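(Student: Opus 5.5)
This is a direct consequence of \cref{monotonicity pro}. The plan is to define the feasibility set of $\alpha_1$ and show it is an interval whose right endpoint belongs to the set.

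\textbf{Step 1: define the feasibility set.} Let $\mathcal{F}\triangleq\{c\in(0,1]:\ \text{Problem \eqref{stability with noise theorem eq} is feasible with } \alpha_1=c\}$. If $\mathcal{F}=\emptyset$, take $c_{\max}=0$. Then $(0,c_{\max}]$ is empty and the statement reduces to infeasibility on $(0,1]$, which holds trivially.

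\textbf{Step 2: $\mathcal{F}$ is an interval starting at $0$.} Otherwise set $c_{\max}\triangleq\sup\mathcal{F}\in(0,1]$. By \cref{monotonicity pro}, membership $c\in\mathcal{F}$ implies $(0,c]\subseteq\mathcal{F}$, so $\mathcal{F}$ is an interval with left end $0$. Hence every $\alpha_1<c_{\max}$ is feasible: pick $c\in\mathcal{F}$ with $c>\alpha_1$ and apply the proposition. Also, every $\alpha_1\in(c_{\max},1]$ is infeasible by the definition of the supremum.

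\textbf{Step 3: the endpoint $c_{\max}$ itself is feasible.} This closedness step is the main obstacle. Take $c_n\uparrow c_{\max}$ with $c_n\in\mathcal{F}$, together with feasible tuples $(\Bar P_n,L_n,G_n,\Bar\alpha_{2,n},\Bar\alpha_{3,n},\Bar\alpha_{4,n},\theta_n)$. The constraint set in \eqref{stability with noise theorem eq} with $\alpha_1$ fixed is defined by non-strict LMIs that are affine in the decision variables. It also depends continuously on $\alpha_1$, through the block $(\beta-\alpha_1)\Bar P$ and the bound $\Bar\alpha_2\ge\epsilon_d^2/\alpha_1$, which is continuous for $\alpha_1>0$. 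Therefore any limit point of the feasible tuples is feasible at $c_{\max}$, except for the strict constraint $\Bar P\succ0$.

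To get a convergent subsequence, I would first normalize the tuple using the homogeneity of the conditions. Scaling all variables $(\Bar P,L,G,\Bar\alpha_2,\Bar\alpha_3,\Bar\alpha_4,\theta_i)$ by a factor $t\ge1$ preserves the LMI \eqref{stability with noise theorem eq1}, which is homogeneous of degree one. It also preserves $\Bar\alpha_2\ge\epsilon_d^2/\alpha_1$, since scaling up only helps. So I can normalize, for example, $\lambda_{\min}(\Bar P_n)\ge 1$. Boundedness of the normalized sequence is the delicate point. I would try to get it either from the boundedness of $\Sigma$ in \cref{boundedness of uncertainty set} or by adding a trace normalization $\mathrm{Tr}(\Bar P_n)$ bounded above.

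If this compactness argument fails, there is a fallback. The paper uses the corollary only operationally, in Algorithm~\ref{algorithm 2}, by bisection on $\alpha_1$. So I would state $c_{\max}$ as the supremum and note that feasibility at the endpoint holds under the standard closedness of SDP feasible sets. Taken together, Steps 1--3 yield the threshold structure claimed in the corollary.
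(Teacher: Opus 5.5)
\textbf{Verdict: genuine gap in Step~3.} Your Steps~1--2 are exactly the paper's argument. The paper's proof is only the appeal to Proposition~\ref{monotonicity pro}, which shows that the feasible set of $\alpha_1$ is downward closed, i.e.\ equal to either $(0,c_{\max})$ or $(0,c_{\max}]$. The gap is Step~3, and it cannot be closed the way you propose.

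With $\alpha_1$ fixed, \eqref{stability with noise theorem eq1} is homogeneous, so the feasible set is always unbounded. Boundedness of $\Sigma$ therefore gives no bounded selection of feasible tuples. Moreover, only scaling by $t\ge1$ preserves $\Bar\alpha_2\ge\epsilon_d^2/\alpha_1$. You can force $\lambda_{\min}(\Bar P_n)\ge1$, but you cannot also cap $\mathrm{Tr}(\Bar P_n)$.

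Suppose instead you drop that bound and normalize $\mathrm{Tr}(\Bar P_n)=1$. Dropping the bound is legitimate: by homogeneity, feasibility at $\alpha_1$ is equivalent to \eqref{stability with noise theorem eq1} and the sign constraints having a solution with $\Bar P\neq0$ and $\Bar\alpha_2>0$. Then any limit point satisfies \eqref{stability with noise theorem eq1} at $c_{\max}$, and even $\Bar P^\star\succ0$. Indeed, the principal submatrix $\left[\begin{smallmatrix}(\beta-\alpha_1)\Bar P & \Bar P\\ \Bar P & \Bar\alpha_3\sigma_x^{-2}I\end{smallmatrix}\right]\succeq0$ forces $\Bar\alpha_3>0$ whenever $\Bar P\neq0$. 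Then $\Bar\Omega_3\succeq0$ gives $\Bar P\succeq\frac{\Bar\alpha_3}{2}I$. So strictness of $\Bar P\succ0$ is not the real issue. The real issue is that the limit can have $\Bar\alpha_2^\star=0$, and such a point cannot be rescaled into feasibility.

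Your fallback is not a valid principle either. Closedness of each fixed-$\alpha_1$ feasible set says nothing about closedness of the set of $\alpha_1$ for which that set is nonempty. That set is the projection of an unbounded set, and such projections need not be closed.

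The endpoint can genuinely be lost. Take the scalar, known-system, trigger-free analogue of \eqref{stability with noise theorem eq1}. Let the input coefficient be $0$, so the closed-loop coefficient is $a\neq0$ for every gain, with $a^2<\beta$, and let $e\neq0$. The constraint reads $\left[\begin{smallmatrix}(\beta-\alpha_1)\bar p & a\bar p\\ a\bar p & \bar p-\Bar\alpha_2e^2\end{smallmatrix}\right]\succeq0$, i.e.\ $(\beta-\alpha_1-a^2)\bar p\ge(\beta-\alpha_1)\Bar\alpha_2e^2$. Combined with $\Bar\alpha_2\ge\epsilon_d^2/\alpha_1>0$, this is feasible if and only if $\alpha_1<\beta-a^2$. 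Every feasible tuple has $\bar p\to\infty$ as $\alpha_1\uparrow\beta-a^2$, so the supremum is not attained.

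Hence what your argument, and the paper's one-line proof, actually establishes is:
\begin{itemize}
\item feasibility on $(0,c_{\max})$;
\item infeasibility on $(c_{\max},1]$;
\item no conclusion about $c_{\max}$ itself.
\end{itemize}
This suffices for the bisection in Algorithm~\ref{algorithm 2}. The closed endpoint in the statement, however, would need an additional argument that is not available in general.
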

\textit{Proof}: This result follows directly from Proposition~\ref{monotonicity pro}. $\hfill \square$

Accordingly, we select $\alpha_1$ as the largest value in $[0,1]$ for which Problem \eqref{stability with noise theorem eq} remains feasible.

To quantify the size of $\mathcal{S}$, we use the sum of the squares of its semi-axes, which can be expressed as $\mathrm{Tr}(P^{-1})$ \cite{kurzhanski1997ellipsoidal}. We are now prepared to investigate the trade-off between the size of $\mathcal{S}$ and the sparsity of the controller $K$. In the following, we present the final optimization problem that motivates an iterative algorithm to solve \textit{Problem 2}, followed by a detailed explanation.

\begin{subequations}\label{real optimization problem of problem 2}
    \begin{align}
        &\mathop{\min}_{Y,P,Q,K,G,\Bar{\alpha}_2,\alpha_3,\Bar{\alpha}_4,\{\theta_i\}_{i=0,1,\ldots,T-1}} \mathrm{Tr}(Q)+\lambda\mathrm{Tr}(W^\top K^{|\cdot|})\  \text{s.t.} \nonumber\\
    &\begin{aligned}
        &\begin{bmatrix}
            \mathcal{T}(Y-\Bar{\alpha}_2 EE^\top) &\mathcal{B}_1(I,K,-G) &0\\
            * & \Delta&\mathcal{B}_3(K^\top)\\
            * &* &\frac{\Bar{\alpha}_4}{\sigma_u^2}I
        \end{bmatrix}\succeq0,\\
        & \Delta = \mathrm{blkdiag}((\beta-\alpha_1)P-\alpha_3 \sigma_x^2 I, \alpha_3 I, \Bar{\Theta}_4)
        \end{aligned} \label{real optimization problem of problem 2 eq1}\\
    &Y \preceq \Tilde{P}^{-1} - \Tilde{P}^{-1}(P-\Tilde{P})\Tilde{P}^{-1}, \label{real optimization problem of problem 2 eq2}\\
        &\begin{bmatrix}
            Q &I\\
            I &P
            \end{bmatrix}\succeq 0,\label{real optimization problem of problem 2 eq3}\\
        &P\in \mathbb{S}^{n_x}_{++}, Q\in \mathbb{S}^{n_x}_{++}, Y\in \mathbb{S}^{n_x}_{++},\\
        &\Bar{\alpha}_2\ge0, \alpha_3\ge0,\Bar{\alpha}_4\ge0, \Bar{\alpha}_2\ge\frac{\epsilon_d^2}{\alpha_1},\\ 
        &\theta_0\ge0,\theta_1\ge0,\ldots, \theta_{T-1}\ge0.
    \end{align}
\end{subequations}
{\color{blue}From a networked control perspective, this formulation explicitly captures a three-way trade-off among communication efficiency (through $\sigma_x$ and $\sigma_u$), disturbance attenuation (through $\epsilon_d$), and controller sparsity. In particular,} The parameter $\lambda$ in the objective function is a positive scalar that trades off between the size of the UUB set $\mathcal{S}$ and the sparsity of the controller $K$; increasing $\lambda$ promotes sparser controller solutions at the potential expense of performance. The constraint in \eqref{real optimization problem of problem 2 eq1} is derived from \eqref{stability with noise theorem eq1} following a procedure analogous to that used for \eqref{real optimization problem of problem 1 eq1}, while \eqref{real optimization problem of problem 2 eq2} is identical to \eqref{real optimization problem of problem 1 eq2}. In \eqref{real optimization problem of problem 2 eq3}, the auxiliary variable $Q$ is introduced to facilitate the quantification of the size of $\mathcal{S}$. Before presenting the final algorithm, we first establishes the relationship between Problem \eqref{real optimization problem of problem 2} and Problem \eqref{stability with noise theorem eq}.
    \begin{mypro}\label{relationship between two problems}
        Given a feasible solution $(Y,P,Q,K,G,\Bar{\alpha}_2,\alpha_3,\Bar{\alpha}_4,\{\theta_i\}_{i=0,1,\ldots,T-1})$ to Problem~\eqref{real optimization problem of problem 2}, setting $\Tilde{P} = P$ ensures that Problem~\eqref{real optimization problem of problem 2} remains feasible. Furthermore, given a feasible solution $(\Bar{P},L,G,\alpha_1,\Bar{\alpha}_2,\Bar{\alpha}_3,\Bar{\alpha}_4,\{\theta_i\}_{i=0,1,\ldots,T-1})$ to Problem~\eqref{stability with noise theorem eq}, setting $\Tilde{P} = \Bar{P}^{-1}$ and fixing this $\alpha_1$ in Problem~\eqref{real optimization problem of problem 2} guarantees feasibility of Problem \eqref{real optimization problem of problem 2}.
    \end{mypro}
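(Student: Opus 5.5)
The proof has two parts, mirroring Proposition~\ref{relationship between three problems}.

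\emph{First claim.} Take a feasible solution of Problem~\eqref{real optimization problem of problem 2} and set $\Tilde{P}=P$. All constraints except \eqref{real optimization problem of problem 2 eq2} do not involve $\Tilde{P}$, so they hold unchanged. For \eqref{real optimization problem of problem 2 eq2}, the right-hand side at $\Tilde{P}=P$ equals $P^{-1}$. The old solution satisfies $Y\preceq \Tilde{P}_{\rm old}^{-1}-\Tilde{P}_{\rm old}^{-1}(P-\Tilde{P}_{\rm old})\Tilde{P}_{\rm old}^{-1}$, and by \eqref{linearization of P inverse} this is $\preceq P^{-1}$. Hence $Y\preceq P^{-1}$, and the same tuple remains feasible. (Alternatively one can set $Y=P^{-1}$ directly and use that \eqref{real optimization problem of problem 2 eq1} is monotone in $Y$: $Y$ enters only through the block $\mathcal{T}(Y-\Bar{\alpha}_2EE^\top)$, and increasing $Y$ adds a positive semidefinite term to a diagonal block.)

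\emph{Second claim.} Start from a feasible solution of Problem~\eqref{stability with noise theorem eq} with the given $\alpha_1$. Set $P=\Bar{P}^{-1}$, $K=L\Bar{P}^{-1}$, $Y=P^{-1}=\Bar{P}$, $\Tilde{P}=\Bar{P}^{-1}$, $\alpha_3=\Bar{\alpha}_3^{-1}$, and keep $G,\Bar{\alpha}_2,\Bar{\alpha}_4,\theta_i$. Then \eqref{real optimization problem of problem 2 eq2} holds with equality. Choose $Q=P^{-1}$; by the Schur complement, $P\succ0$ gives \eqref{real optimization problem of problem 2 eq3}. The scalar constraints, including $\Bar{\alpha}_2\ge\epsilon_d^2/\alpha_1$, are inherited.

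The remaining task is to show that \eqref{stability with noise theorem eq1} implies \eqref{real optimization problem of problem 2 eq1}. This is the transformation described after Theorem~\ref{stability without noise theorem}, with $\beta\Bar P$ replaced by $(\beta-\alpha_1)\Bar P$ and $\Bar{\alpha}_1,\Bar{\alpha}_2$ relabeled as $\Bar{\alpha}_3,\Bar{\alpha}_4$. I would carry it out in four steps.

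\begin{enumerate}
\item Use the bound $\Bar{\Omega}_3=2\Bar{P}-\Bar{\alpha}_3 I\preceq \Bar{\alpha}_3^{-1}\Bar{P}^2$, which follows from $(\Bar P-\Bar\alpha_3 I)^2\succeq0$. Replacing the $\Bar{\Omega}_3$ block by $\alpha_3\Bar P^{2}$ only increases the matrix, so it remains positive semidefinite.
\item Apply the congruence $\mathrm{blkdiag}(I,I,I,P,P,I,I,I)$. This turns $\Bar P$ into $I$ in the $\mathcal{B}_1$ and $\mathcal{B}_2$ blocks and $L$ into $K$, and produces the diagonal blocks $(\beta-\alpha_1)P$ and $\alpha_3 I$.
\item Take the Schur complement with respect to the last block $\frac{\Bar\alpha_3}{\sigma_x^2}I$. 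Its coupling column $[I;-I;0]$, after scaling by $P$, contributes $-\alpha_3\sigma_x^2 I$ to the first block of $\Delta$, and because $\alpha_3=\Bar\alpha_3^{-1}$ the blocks combine consistently. What remains is the $\mathcal{B}_3(K^\top)$ column with the $\frac{\Bar\alpha_4}{\sigma_u^2}I$ block.
\item Compare the resulting matrix with \eqref{real optimization problem of problem 2 eq1}.
\end{enumerate}

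The main obstacle is this bookkeeping of the congruence and the Schur complement: confirming that the $\sigma_x$-column elimination yields exactly the $(1,1)$ block $(\beta-\alpha_1)P-\alpha_3\sigma_x^2 I$ and the $(2,2)$ block $\alpha_3 I$, with $\alpha_3=\Bar\alpha_3^{-1}$. I would check this by writing the $2\times2$ subsystem in $(x,e_x)$ explicitly. A closely related alternative is to use the paper's own appendix derivation \eqref{temp1 in proof of thr1} for Theorem~\ref{stability without noise theorem} and observe that it goes through verbatim here, because the $(1,1)$ block $\mathcal{T}(\cdot)$ is untouched by the congruence.
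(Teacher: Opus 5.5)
Your proposal is correct and is essentially the paper's argument. The paper omits this proof as analogous to Proposition~\ref{relationship between three problems}: the first claim uses \eqref{linearization of P inverse}, and the second uses the bound $\bar\alpha_3^{-1}\bar P^2\succeq 2\bar P-\bar\alpha_3 I$, the congruence $\mathrm{blkdiag}(I,I,I,P,P,I,I,I)$ and a Schur complement, then takes $\tilde P=P$, $Y=P^{-1}$; your explicit choice $Q=P^{-1}$ handles the one variable this problem adds. One correction to your step 3: the column coupled to $\frac{\bar\alpha_3}{\sigma_x^2}I$ is the second column $[\bar P;0;0]$ of $\mathcal{B}_2(L^\top,\bar P)$, which becomes $[I;0;0]$ after the congruence, not $[I;-I;0]$ (that $[X;-X;0]$ pattern is the $\sigma_u$ column, which becomes $\mathcal{B}_3(K^\top)$); this is exactly why the elimination changes only the first block to $(\beta-\alpha_1)P-\alpha_3\sigma_x^2 I$ and leaves $\alpha_3 I$ intact, and $\bar\alpha_3>0$ is forced because a zero diagonal block cannot couple to $\bar P\succ0$, so $\alpha_3=\bar\alpha_3^{-1}$ and the Schur step are well defined.
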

    \textit{Proof}: The proof follows analogously to that of Proposition~\ref{relationship between three problems} and is omitted for brevity. $\hfill \square$

In analogy to Proposition~\ref{relationship between three problems}, Proposition~\ref{relationship between two problems} motivates an iterative algorithm based on Problem \eqref{real optimization problem of problem 2}, initialized with $\Tilde{P}$ obtained by solving Problem \eqref{stability with noise theorem eq}. The complete procedure is summarized in Algorithm~\ref{algorithm 2}. {\color{blue}Overall, in contrast to the noise-free case, the achievable performance is no longer solely determined by communication and sparsity, but is fundamentally limited by their interaction with disturbances.}

\subsection{$H_\infty$ Performance Analysis}
In this section, we address \textit{Problem 3}, which incorporates the performance output into consideration. 
The performance output is given as
\begin{equation}\label{performance output}
    y(k) = C x(k) + D \hat{u}(k),
\end{equation}
where $C$ and $D$ are known and deterministic matrices. Ideally, the performance of the controller $K$ should be characterized by the maximal $H_\infty$ norm over all $(A,B)\in \Sigma$. However, since our approach is based on the matrix S-procedure, which provides only sufficient  conditions, the resulting performance bound is an upper bound on the maximal $H_\infty$ norm for $\forall(A,B)\in \Sigma$. We refer to this upper bound as the $H_\infty$ norm bound. Our objective is to investigate the trade-off between the achievable $\mathcal{H}_\infty$ performance level and the sparsity of the controller $K$, under event-triggered communication characterized by $\sigma_x$ and $\sigma_u$. {\color{blue}From a networked control perspective, communication constraints and controller sparsity directly influence the attenuation of disturbance signals through the control loop, thereby affecting the achievable $\mathcal{H}_\infty$ performance of the closed-loop system.}

The closed-loop system under consideration is given by
\begin{equation}\label{closed-loop system problem 3}
    \begin{split}
        x(k+1) &= (A  + B K) x(k) - B K e_x(k) - B e_u(k) + E d(k), \\
        y(k) &= C x(k) + D K x(k) - D K e_x(k) - D e_u(k),
    \end{split}
\end{equation}
where $x(k)$, $e_x(k)$, and $e_u(k)$ satisfy \eqref{event-trigger QMI 1} and \eqref{event-trigger QMI 2}. We consider the Lyapunov function $V(k) = x^\top(k) P x(k)$ with $P \succ 0$. According to \citep[Theorem 4.6.6]{skelton2013unified}, the $H_\infty$ norm of the system is less than or equal to $\gamma$ if and only if
\begin{equation}\label{hinf condition}
    V(k+1) - V(k) \leq \gamma^2 d^\top(k) d(k) - y^\top(k) y(k)
\end{equation}
holds for all $k$. Since $A$, $B$, $x(k)$, $e_x(k)$, and $e_u(k)$ are unknown, we require that \eqref{hinf condition} holds for $\forall(A,B)\in\Sigma$, $\forall d(k)$ satisfying $\|d(k)\|_2 \leq \epsilon_d$, and $\forall\left[\begin{smallmatrix} x^\top & e_x^\top(k) & e_u^\top(k) \end{smallmatrix}\right]^\top$ satisfying \eqref{event-trigger QMI 1} and \eqref{event-trigger QMI 2}. {\color{blue}The following theorem establishes a sufficient condition under which the closed-loop system achieves an $H_\infty$ performance level obtained from the proposed design.}
\begin{mythr}\label{Hinf theorem}
    Consider system \eqref{basic system} and \eqref{performance output} with $d(k)$ satisfying $\|d(k)\|_2\leq \epsilon_d$ for all $k\ge0$. Given the event-triggered parameters $\sigma_x,\sigma_u$ in \eqref{event-trigger condition}, if there exist matrices $\Bar{P}\in\mathbb{R}^{n_x\times n_x}$, $L\in\mathbb{R}^{n_u\times n_x}$, $G\in\mathbb{R}^{n_u\times n_u}$, and scalars $\gamma$, $\Bar{\alpha}_1$, $\Bar{\alpha}_2$, $\{\theta_i\}_{i=0,1,\ldots,T-1}$ such that the following holds:
    \begin{subequations}\label{hinf theorem eq}
    \begin{align}
    &\begin{aligned}
        &\begin{bmatrix}
            \mathcal{T}(\Bar{P}) &\mathcal{B}_1(\Bar{P},L,-G) &0 & 0&\Xi_1\\
            * & \Delta_1 & \Xi_2 &\mathcal{B}_2(L^\top,\Bar{P})& 0\\
            * &* &I &0 &0\\
            * &* &* & \Delta_2 &0\\
            * & * &* &* &\gamma^2 I
        \end{bmatrix}\succeq0,\\
        & \Delta_1 = \mathrm{blkdiag}(\Bar{P}, \Bar{\Omega}_1, \Bar{\Theta}_2), \Delta_2 = \mathrm{blkdiag}(\frac{\Bar{\alpha}_2}{\sigma_u^2}I, \frac{\Bar{\alpha}_1}{\sigma_x^2}I),\\
        & \Xi_1 = \begin{bmatrix}E^\top &0 &0\end{bmatrix}^\top, \Xi_2 = \begin{bmatrix}C\Bar{P}+DL &-DL &-DG\end{bmatrix}^\top,
        \end{aligned} \\
    & \Bar{P}\in \mathbb{S}^{n_x}_{++}, \quad \Bar{\alpha}_1\ge0, \quad \Bar{\alpha}_2\ge0, \quad \theta_0\ge0, \ldots, \theta_{T-1}\ge0, 
    \end{align}
\end{subequations} 
then the $H_\infty$ norm of the closed-loop system \eqref{closed-loop system problem 3} is less than or equal to $\gamma$ under the controller $K=L\Bar{P}^{-1}$ with Lyapunov matrix $P=\Bar{P}^{-1}$.
\end{mythr}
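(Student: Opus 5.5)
The plan is to follow the same route as the proof of \cref{stability without noise theorem}. The goal is to show that the LMI \eqref{hinf theorem eq} implies the dissipation inequality \eqref{hinf condition} for every $(A,B)\in\Sigma$ and every $\xi(k)\triangleq[x^\top(k)\ e_x^\top(k)\ e_u^\top(k)]^\top$ obeying \eqref{event-trigger QMI 1}--\eqref{event-trigger QMI 2}. Summing \eqref{hinf condition} from $k=0$ with $x(0)=0$ and using $V\ge0$ then gives $\sum\|y\|^2\le\gamma^2\sum\|d\|^2$.

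\textbf{Step 1: write the target as a quadratic form.} With $K=L\Bar P^{-1}$ and $P=\Bar P^{-1}$, define $\Phi\triangleq\begin{bmatrix}A+BK & -BK & -B\end{bmatrix}$, and let $\Gamma$ be the analogous output map built from $C,D,K$. The inequality \eqref{hinf condition} is the nonnegativity of a quadratic form in $(\xi,d)$. Its kernel is
\[
\mathrm{blkdiag}(P,0,0,\gamma^2 I)-[\Phi\ \ E]^\top P[\Phi\ \ E]-[\Gamma\ \ 0]^\top[\Gamma\ \ 0].
\]
The S-procedure over the two triggering QMIs allows subtracting $\alpha_1(\cdot)$ and $\alpha_2(\cdot)$ of the kernels in \eqref{event-trigger QMI 1}--\eqref{event-trigger QMI 2}. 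It is therefore enough to show that this S-procedure-augmented matrix is $\succeq0$ for all $(A,B)\in\Sigma$.

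\textbf{Step 2: Schur complements.} I will apply Schur complements to pull out three terms:
\begin{itemize}
\item the term $[\Phi\ E]^\top P[\Phi\ E]$, via a block $P^{-1}=\Bar P$ that becomes the $[I\ A^\top\ B^\top]$-dependent part;
\item the output term $\Gamma^\top\Gamma$, which produces the identity block and the $\Xi_2$ column;
\item the $K^\top K$ term of \eqref{event-trigger QMI 2}, which produces the $\frac{\Bar\alpha_2}{\sigma_u^2}I$ block.
\end{itemize}
Next I will congruence-transform by $\mathrm{blkdiag}(I,\Bar P,\Bar P,\dots)$ so that $K\Bar P=L$. As in the earlier proof, the $-\alpha_1 I$ from the $e_x$ constraint becomes $-\alpha_1\Bar P^2$. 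I will bound it with $\alpha_1\Bar P^2\succeq 2\Bar P-\alpha_1^{-1}I$, which gives $\Bar\Omega_1$ with $\Bar\alpha_1=\alpha_1^{-1}$. The $\sigma_x^2$ term will be handled by one more Schur complement, giving the $\frac{\Bar\alpha_1}{\sigma_x^2}I$ block.

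\textbf{Step 3: the $e_u$ channel and $G$.} For the $e_u$ channel I will use the slack $G$ through $G^\top+G-\Bar\alpha_2 I\preceq \alpha_2^{-1}\cdot G^\top G$-type bounds, i.e.\ $\Bar\Theta_2$. This mirrors the noise-free proof verbatim, with the extra columns $\Xi_1$ (for $E d$) and $\Xi_2$ (for the output) appended.

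\textbf{Step 4: robustness over $\Sigma$.} The remaining $(A,B)$-dependence sits in a block of the form $\left[\begin{smallmatrix} I\\ A^\top\\ B^\top\end{smallmatrix}\right]^\top(\cdot)\left[\begin{smallmatrix} I\\ A^\top\\ B^\top\end{smallmatrix}\right]$. Here I will invoke \cref{s procedure} with multipliers $\theta_i$ and the data QMIs $\Psi_i$. Because the top-left block in \eqref{hinf theorem eq} is $\mathcal T(\Bar P)=\mathcal B_4(\Bar P)-\sum\theta_i\Psi_i$, feasibility gives the required inequality for all $(A,B)\in\Sigma$.

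\textbf{Main obstacle.} The crux is arranging the Schur complements so that $A,B$ enter only through the single factor $\left[\begin{smallmatrix} I\\ A^\top\\ B^\top\end{smallmatrix}\right]$ multiplying the first block row. This must hold with the $d$ column ($\Xi_1$) and the output columns ($\Xi_2$) placed outside the uncertain block, and I would first check this column ordering against the matrix in \eqref{hinf theorem eq}. A second point is that the S-lemma there is lossy and is applied with a matrix-valued $Z$. I would apply it with $Z=[A\ B]^\top$ after a Schur step that isolates the $3$-block row, exactly as in the proof of \cref{stability without noise theorem}.
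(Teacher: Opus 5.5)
Your proposal is correct and follows essentially the route the paper intends; the paper omits this proof as a combination of the proofs of \cref{stability without noise theorem} and \cref{stability with noise theorem}, which use the same ingredients: the bounds yielding $\Bar\Omega_1$ and $\Bar\Theta_2$, the congruence $\mathrm{blkdiag}(I,I,I,P,P,(G^\top)^{-1},\ldots)$ (which turns $\Xi_2$ into $[C+DK\ \ {-DK}\ \ {-D}]^\top$), Schur complements, \cref{s procedure} with $Z=[A\ B]^\top$, and the triggering S-procedure; your one deviation, Schur-complementing the $\gamma^2 I$ block so that $\Bar P-\gamma^{-2}EE^\top$ sits in the data block, is an equivalent and slightly cleaner substitute for the matrix-inversion-lemma manipulation of $EE^\top$ in the proof of \cref{stability with noise theorem}. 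One slip to fix: the slack bound should read $\Bar\Theta_2\preceq\Bar\alpha_2^{-1}G^\top G=\alpha_2 G^\top G$ (not $\alpha_2^{-1}G^\top G$), and it arises from congruence of the $e_u$ block by $G$, which is invertible whenever $\Bar\alpha_2>0$ because $G+G^\top\succeq\Bar\alpha_2 I$.
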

\textit{Proof}: The proof follows by combining techniques used in the proofs of \cref{stability without noise theorem} and \cref{stability with noise theorem}, and thus is omitted. $\hfill \square$

For brevity, we directly present the optimization problem that motivates the iterative algorithm for solving \textit{Problem~3}:
 \begin{subequations}\label{real optimization problem of problem 3}
    \begin{align}
        &\mathop{\min}_{Y,P,K,G,\gamma,\alpha_1,\Bar{\alpha}_2,\{\theta_i\}_{i=0,1,\ldots,T-1}} \gamma^2+\lambda\mathrm{Tr}(W^\top K^{|\cdot|})\qquad  \text{s.t.} \nonumber\\
    &\begin{aligned}
        &\begin{bmatrix}
            \mathcal{T}(Y) &\mathcal{B}_1(I,K,-G) &0 & 0&\Xi_1\\
            * & \Delta & \Xi_2 &\mathcal{B}_3(K^\top)& 0\\
            * &* &I &0 &0\\
            * &* &* & \frac{\Bar{\alpha}_2}{\sigma_u^2}I &0\\
            * & * &* &* &\gamma^2 I
        \end{bmatrix}\succeq0,\\
        & \Delta = \mathrm{blkdiag}(P-\alpha_1\sigma_x^2 I, \alpha_1 I, \Bar{\Theta}_2),\\
        & \Xi_1 = \begin{bmatrix}E^\top &0 &0\end{bmatrix}^\top, \Xi_2 = \begin{bmatrix}C+DK &-DK &-DG\end{bmatrix}^\top,
        \end{aligned} \\
    &Y \preceq \Tilde{P}^{-1} - \Tilde{P}^{-1}(P-\Tilde{P})\Tilde{P}^{-1}, \\
        & P\in \mathbb{S}^{n_x}_{++}, Y\in \mathbb{S}^{n_x}_{++},\alpha_1\ge0, \Bar{\alpha}_2\ge0,\\
        & \theta_0\ge0, \theta_1\ge0,\ldots, \theta_{T-1}\ge0.
    \end{align}
\end{subequations} 
The derivation follows analogously to the procedures in the previous two sections. The optimization problem is given in Problem \eqref{real optimization problem of problem 3}.
\begin{algorithm}[t]
    \caption{Iterative algorithm to solve \textit{Problem 3}.}
\textbf{Output:} $K^*,\gamma^*$;\\
    Solve Problem \eqref{hinf theorem eq} and initialize $P_0=\Bar{P}^{-1}, W_0=\mathbf{1}_{n_u\times n_x}, \lambda>0,\epsilon_r>0,k=0$;\\
    \Repeat{\text{convergence or the maximum iteration reached\vspace{5pt}}}{
    - Solve Problem \eqref{real optimization problem of problem 3} with $\Tilde{P} = P_k, W=W_k$;\\
    - Obtain the solutions $K_{k+1}, P_{k+1}, \gamma_{k+1}$;\\
    - $W_{k+1} = (K_{k+1}^{|\cdot|}+\epsilon_r \cdot \mathbf{1}_{n_u\times n_x})^{\circ -1}$;\\
    - $k = k + 1$;
    }
\Return $K_k,\gamma_k$.
\label{algorithm 3}
\end{algorithm} 
The following result establishes the relationship between Problem \eqref{real optimization problem of problem 3} and Problem \eqref{hinf theorem eq}.
\begin{mypro}
    Given a feasible solution $(Y,P,K,G,\gamma,  \alpha_1,\Bar{\alpha}_2,\{\theta_i\}_{i=0,1,\ldots,T-1})$ to Problem~\eqref{real optimization problem of problem 3}, setting $\Tilde{P} = P$ ensures that Problem \eqref{real optimization problem of problem 3} remains feasible. Furthermore, given a feasible solution $(\Bar{P},L,G,\gamma,\Bar{\alpha}_1,\Bar{\alpha}_2,\{\theta_i\}_{i=0,1,\ldots,T-1})$ to Problem~\eqref{hinf theorem eq}, setting $\Tilde{P} = \Bar{P}^{-1}$ guarantees feasibility of Problem~\eqref{real optimization problem of problem 3}.
\end{mypro}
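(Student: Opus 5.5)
The proposition has two parts. Part one is a \emph{self-consistency} claim: Problem~\eqref{real optimization problem of problem 3} stays feasible when it is relinearized at its own solution, $\Tilde{P}=P$. Part two is an \emph{initialization} claim: a solution of Problem~\eqref{hinf theorem eq} yields a feasible point of Problem~\eqref{real optimization problem of problem 3} once we set $\Tilde{P}=\Bar{P}^{-1}$. I would follow the route of Proposition~\ref{relationship between three problems}, since only the large LMI changes.

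\textbf{Part one.} Take a feasible tuple $(Y,P,K,G,\gamma,\alpha_1,\Bar{\alpha}_2,\{\theta_i\})$ and keep every variable unchanged. The large LMI does not involve $\Tilde{P}$, so it still holds. With $\Tilde{P}=P$ the linearization constraint reads $Y\preceq P^{-1}$. Feasibility for the old $\Tilde{P}$ together with \eqref{linearization of P inverse} gives
\[
Y\preceq \Tilde{P}^{-1}-\Tilde{P}^{-1}(P-\Tilde{P})\Tilde{P}^{-1}\preceq P^{-1},
\]
which is exactly the new constraint. An alternative is to replace $Y$ by $P^{-1}$. Since $Y$ enters the LMI only through $\mathcal{T}(Y)=\mathcal{B}_4(Y)-\sum\theta_i\Psi_i$, and $\mathcal{B}_4$ is monotone, enlarging $Y$ to $P^{-1}$ keeps the LMI feasible. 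Either argument settles part one.

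\textbf{Part two.} Given $(\Bar{P},L,G,\gamma,\Bar{\alpha}_1,\Bar{\alpha}_2,\{\theta_i\})$ feasible for \eqref{hinf theorem eq}, I would set
\[
P=\Bar{P}^{-1},\quad K=L\Bar{P}^{-1},\quad \alpha_1=\Bar{\alpha}_1^{-1},\quad Y=\Bar{P}=P^{-1},\quad \Tilde{P}=P,
\]
keeping $G$, $\gamma$, $\Bar{\alpha}_2$ and $\theta_i$ as they are. The linearization constraint then holds with equality.

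The main work is to show the large LMI of \eqref{real optimization problem of problem 3} holds, mirroring the passage from \eqref{stability without noise theorem eq1} to \eqref{temp3 eq1}.
\begin{enumerate}
\item Use the bound $\Bar{\alpha}_1 P^{-1}P^{-1}\succeq 2P^{-1}-\Bar{\alpha}_1 I$, i.e.\ $(P^{-1}-\Bar{\alpha}_1 I)^2\succeq0$ scaled appropriately. This lets us replace the block $\Bar{\Omega}_1=2\Bar{P}-\Bar{\alpha}_1 I$ by the larger block $\Bar{\alpha}_1\Bar{P}^2$, which preserves the LMI.
\item Congruence-transform with a block-diagonal matrix carrying $P=\Bar{P}^{-1}$ on the two rows associated with $x$ and $e_x$ in the $\Delta_1$ block, and identity elsewhere.
\item Substitute $L=K\Bar{P}$. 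The blocks then become $\mathcal{B}_1(I,K,-G)$, $\Xi_2=\begin{bmatrix}C+DK &-DK &-DG\end{bmatrix}^\top$, and $\mathcal{B}_2(K^\top,I)$.
\item Take a Schur complement on the $\frac{\Bar{\alpha}_1}{\sigma_x^2}I$ block, i.e.\ eliminate the $e_x$-trigger row. This turns $\Delta_1$ into $\mathrm{blkdiag}(P-\alpha_1\sigma_x^2 I,\alpha_1 I,\Bar{\Theta}_2)$, with $\alpha_1=\Bar{\alpha}_1^{-1}$. It also reduces $\mathcal{B}_2$ to $\mathcal{B}_3(K^\top)$ and leaves the last two diagonal blocks as $\frac{\Bar{\alpha}_2}{\sigma_u^2}I$ and $\gamma^2I$.
\end{enumerate}
The extra blocks $I$, $\Xi_1$, $\Xi_2$ and $\gamma^2I$ only ride along: $\Xi_1$ sits in the first block row, which the congruence does not touch, and $\Xi_2$ only picks up the substitution $L\to K$.

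\textbf{Expected obstacle.} The difficulty is the bookkeeping in steps 2--4. I would check carefully that the congruence scales $\Xi_2$ correctly, so that $C\Bar{P}+DL$ becomes $C+DK$. I would also check that the Schur complement produces exactly $P-\alpha_1\sigma_x^2 I$ and the $\alpha_1 I$ block, and that no cross term is left between $\Xi_2$ and the eliminated row. The degenerate case $\Bar{\alpha}_1=0$ must be handled separately, either by perturbing $\Bar{\alpha}_1$ to be slightly positive or by noting that $\Bar{\Omega}_1\succeq0$ then forces $\Bar{P}$ to be at least zero. Once these checks are done, the argument is identical to Appendix~\ref{proof of relationship between three problems}.
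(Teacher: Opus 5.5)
Your proposal is correct and follows the paper's route. The paper defers to the proof of Proposition~\ref{relationship between three problems}: the linearization inequality plus $Y=P^{-1}$ for the first claim, and for the second claim the change of variables, the bound on $\Bar{\Omega}_1$, the congruence and the Schur complement that lead from \eqref{stability without noise theorem eq1} to \eqref{temp3 eq1}. Your bookkeeping in steps 2--4 is right, including $P(C\Bar{P}+DL)^\top=(C+DK)^\top$ and the disappearance of the second column of $\mathcal{B}_2(K^\top,I)$ into $P-\alpha_1\sigma_x^2 I$.

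There are two small corrections.

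First, in step 1 the coefficient is inverted. The valid bound is
\[
\Bar{\alpha}_1^{-1}\Bar{P}^2\succeq 2\Bar{P}-\Bar{\alpha}_1 I ,
\]
which comes from $\Bar{\alpha}_1^{-1}(\Bar{P}-\Bar{\alpha}_1 I)^2\succeq 0$; equivalently $\Bar{\Omega}_1\preceq\alpha_1\Bar{P}^2$ with $\alpha_1=\Bar{\alpha}_1^{-1}$. The inequality $\Bar{\alpha}_1\Bar{P}^2\succeq 2\Bar{P}-\Bar{\alpha}_1 I$ as you wrote it is false in general; it fails whenever $\Bar{\alpha}_1<1$ and $\Bar{P}$ has eigenvalue $1/\Bar{\alpha}_1$. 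With that coefficient, the congruence would also produce $\Bar{\alpha}_1 I$ rather than the required $\alpha_1 I$.

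Second, the case $\Bar{\alpha}_1=0$ needs neither a perturbation nor the remark about $\Bar{\Omega}_1$; it simply cannot occur. In a positive semidefinite matrix, a zero diagonal block $\frac{\Bar{\alpha}_1}{\sigma_x^2}I$ forces its off-diagonal block to vanish. That block is the second column $[\Bar{P};0;0]$ of $\mathcal{B}_2(L^\top,\Bar{P})$, so $\Bar{P}=0$, contradicting $\Bar{P}\succ0$. Hence $\Bar{\alpha}_1>0$ automatically, and the change of variables $\alpha_1=\Bar{\alpha}_1^{-1}$ is always legitimate.
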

\textit{Proof}: The proof is analogous to that of \cref{relationship between three problems} and is omitted for brevity. $\hfill \square$

The complete algorithm is summarized in \cref{algorithm 3}. {\color{blue}
Overall, the $\mathcal{H}_\infty$ formulation complements the UUB analysis by shifting the focus from state boundedness to disturbance rejection capability. It reveals how communication constraints and controller sparsity jointly affect the achievable $H_\infty$ performance level in data-driven NCSs.
}
{\color{blue}
\begin{myrem}
    The computational complexity of Algorithms \ref{algorithm 1}-\ref{algorithm 3} is mainly determined by solving a sequence of semidefinite programs (SDPs) in an iterative manner. At each iteration, an SDP with decision variables and matrix inequality constraints whose sizes scale with the system dimension is solved. Following the interior-point method framework for SDP \cite{vandenberghe1996semidefinite}, the resulting computational complexity can be characterized as $\mathcal{O}(N_{max}\sqrt{n_x+n_u}(n_x^2+n_u^2+n_xn_u+T)^3)$, where $N_{max}$ denotes the maximum number of iterations. The corresponding memory complexity scales as $\mathcal{O}((n_x^2+n_u^2+n_xn_u+T)^2)$. Therefore, both the computational and memory complexities grow polynomially with respect to the system dimensions and the data length. In the implementation, although we set $N_{max}=150$, the algorithms typically converge within tens to around one hundred iterations, indicating that the overall computational cost remains tractable for systems of moderate size. It is worth emphasizing that the above computational cost corresponds to the offline design stage. In practical NCSs, controller synthesis can typically be performed offline using relatively powerful computational resources, where higher computational complexity can be tolerated. In contrast, the online implementation is executed on resource-constrained platforms and must meet real-time requirements. Once the controller and triggering mechanism are obtained, the online implementation of the proposed framework only involves evaluating simple triggering conditions and computing control inputs via sparse controller gains, which is computationally lightweight and requires limited memory. Therefore, the proposed approach effectively shifts the computational burden to the offline stage, while significantly reducing both communication and computation costs during online operation.
\end{myrem}
\begin{myrem}
    It should be noted that, even if the uncertainty set $\Sigma$ is bounded, the feasibility of Problems \eqref{stability without noise theorem eq}, \eqref{stability with noise theorem eq}, and \eqref{hinf theorem eq} is not guaranteed in general. This is because feasibility depends not only on the size of $\Sigma$, which is determined by the noise level and data informativeness, but also on the conservativeness of the proposed conditions. Specifically, the results provide verifiable sufficient conditions for controller synthesis, which may be conservative due to two aspects: (i) the derivation steps, including the (lossy) matrix S-lemma, bounding and decoupling techniques, and the use of a common Lyapunov function, and (ii) the requirement to ensure robustness over the data-driven uncertainty set. As a result, infeasibility may stem either from insufficiently informative data or from relaxation-induced conservativeness. When the noise level decreases, the data become more informative and the uncertainty set shrinks, which improves feasibility and reduces conservativeness.
\end{myrem}
\begin{myrem}
    Algorithms \ref{algorithm 1}–\ref{algorithm 3} are iterative reweighting-type procedures and are heuristic in nature. Similar to reweighted $\ell_1$ minimization methods widely studied in the literature (e.g., \cite{candes2008enhancing}), establishing rigorous global convergence guarantees is generally challenging. Nevertheless, such algorithms have been extensively used in practice and are observed to exhibit reliable convergence behavior. In our implementation, each algorithm is terminated when either the maximum number of iterations ($150$) is reached, or the relative change in the objective value between two consecutive iterations falls below a prescribed tolerance set to $10^{-4}$. In all numerical experiments, the algorithms typically converge within a few tens of iterations, well before reaching the maximum iteration limit.
\end{myrem}
}
\section{Simulations}\label{Simulations}
In this section, we present three simulation examples to demonstrate the effectiveness and scalability of the proposed methods in addressing \textit{Problem 1}, \textit{Problem 2}, and \textit{Problem 3}, respectively, together with comparisons to existing literature. All examples are discretized using the zero-order hold (ZOH) method with sampling period $T_s = 0.1$s and {\color{blue} implemented within the NCS framework defined in \cref{Event-trigger control}, where S2C and C2A communication channels are explicitly present, and the sparsity pattern of $K$ characterizes the underlying controller information topology.} For each example, we assume access to a sequence of noisy measurements of system states and control inputs with data length $T=100$. The state measurement noise $\delta_x(k)$ and input measurement noise $\delta_u(k)$ are both assumed to be bounded, with $\epsilon_x = \epsilon_u = 0.001$. The bound for the process noise $d(k)$ will be specified individually for each simulation example. {\color{blue}Throughout this section, we use $r_x$ and $r_u$ to represent the transmission rates over the S2C and C2A channels, respectively, and are computed as the ratio between the number of triggering events and the simulation length}.
All simulations are performed on a desktop equipped with an Intel Core i7-12700H CPU and 16 GB RAM, using MATLAB R2024b and Mosek.

\subsection{Example 1: Reactor Model}\label{Example 1}
In this example, we address \textit{Problem 1}. We consider the REA1 example from the standard benchmark collection COMPleib \cite{leibfritz2003description}, which models the dynamics of a chemical reactor. The corresponding discrete-time state-space model is given by \eqref{basic system} with $d(k)=0$, where
\begin{equation*}
        \small{A_*} = \begin{bmatrix}\begin{smallmatrix}
            1.1782 & 0.0015 & 0.5116 & -0.4033 \\
            -0.0515 & 0.6619 & -0.0110 & 0.0613 \\
            0.0762 & 0.3351 & 0.5606 & 0.3824 \\
            -0.0006 & 0.3353 & 0.0893 & 0.8494
        \end{smallmatrix}\end{bmatrix}, 
        \small{B_*} = \begin{bmatrix}\begin{smallmatrix}
            0.0045 & -0.0876 \\
            0.4672 & 0.0012 \\
            0.2132 & -0.2353 \\
            0.2131 & -0.0161
        \end{smallmatrix}\end{bmatrix}.
\end{equation*}
Since $d(k)=0$, we collect data in the absence of the process noise and use $\epsilon_d=0$ to describe the set of $(A,B)$ consistent with data. We set $\beta = 0.999$ and choose the event-triggered parameters as $\sigma_x = 0.1581$ and $\sigma_u = 0.01$. By executing \cref{algorithm 1}, we obtain the following sparse controller:
\begin{equation*}
    K^* = \begin{bmatrix}\begin{smallmatrix}
        0 & 0 & 0 & 0 \\
        1.3849 & 0 & 0 & 0
    \end{smallmatrix}\end{bmatrix}.
\end{equation*}
This result indicates that only the second actuator is active, and its computation depends solely on the first state variable, as the first row of $K^*$ is identically zero. We simulate the closed-loop system with the initial condition $x(0) = \left[\begin{smallmatrix} -1 & -2 & 2 & -2 \end{smallmatrix}\right]^\top$. The trajectories of the system states and actuator control signals are depicted in \cref{problem1 plot}.

As shown in \cref{problem1:x}, the system states converge to the origin within approximately $30$ time steps ($3$ seconds). \cref{problem1:hatu} demonstrates that the first actuator remains inactive, while the second actuator is updated intermittently, reflecting the effect of the event-triggered mechanism. The transmission rates over the S2C and C2A channels are $68.33\%$ and $66.67\%$, respectively. 
{\color{blue}
To further illustrate the impact of the event-triggered parameters, multiple case studies with different $(\sigma_x, \sigma_u)$ are reported in \cref{Effect of Event-Triggered Parameters for example 1}.

Comparing the first two rows shows that increasing $\sigma_u$ significantly reduces the C2A transmission rate. In addition, comparing the first two rows with the next two rows shows that increasing $\sigma_x$ effectively reduces both the S2C and C2A transmission rates. It is worth noting that excessively low transmission rates may lead to less sparse stabilizing controllers. This is because both ETC and SC may impair system stability, revealing an inherent trade-off between them. According to the unified cost model, the third and fourth rows are preferable to the first two rows, while the specific choice depends on the relative emphasis on ETC and SC (i.e., the weighting coefficients in the unified cost model). Finally, the fifth row shows that excessively large event-triggered parameters may lead to infeasibility, which is consistent with the previous discussion.
}
\begin{figure}[t]
    \centering
    \hfill 
    \begin{subfigure}[b]{0.49\columnwidth}
        \includegraphics[width=\linewidth]{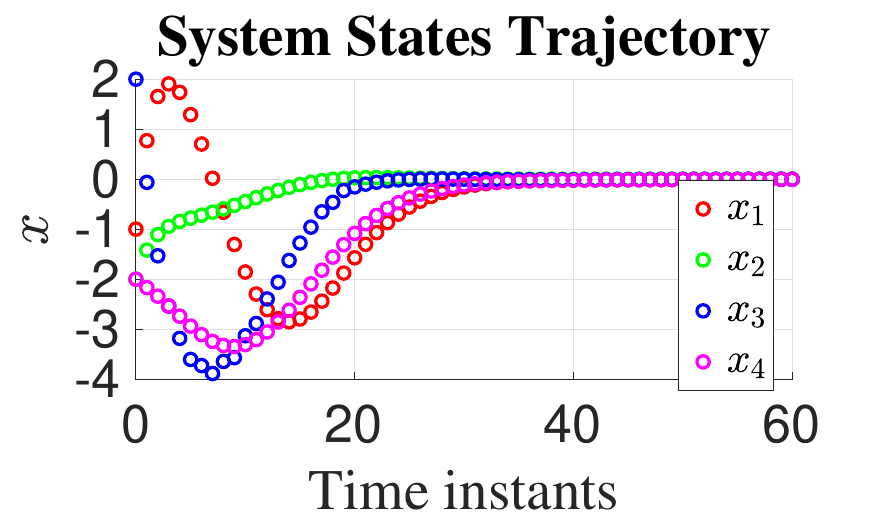}
        \caption{State trajectory $x$.}
        \label{problem1:x}
    \end{subfigure}
    \hfill
    \begin{subfigure}[b]{0.49\columnwidth}
        \includegraphics[width=\linewidth]{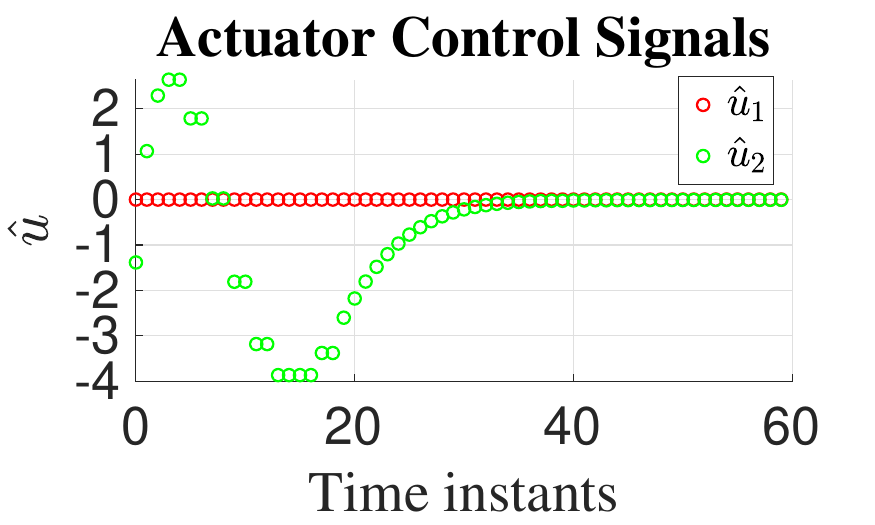}
        \caption{Control input $\hat{u}$.}
        \label{problem1:hatu}
    \end{subfigure}
    \caption{{\color{blue}State and control input trajectories of the closed-loop system without process noise, showing state convergence and intermittent control updates.}}
    \label{problem1 plot}
\end{figure}
\begin{table}[!t]
\centering
\caption{Effect of Event-Triggered Parameters on Feasibility, Communication Rate, and Controller Sparsity.}
\label{Effect of Event-Triggered Parameters for example 1}
{\color{blue}
\begin{tabular}{c c c c c c}
\toprule
$\sigma_x$ &$ \sigma_u$ & $r_x$ & $r_u$ & $\|K\|_0$ & Feasibility\\
\midrule
$0.0316$& $0.0316$ & $100\%$ &$91.67\%$ & $1$ & Feasible  \\
$0.0316$& $0.4472$ & $100\%$ &$31.67\%$ & $3$ &Feasible \\
$0.1581$& $0.0100$ & $68.33\%$ &$66.67\%$ & $1$ &Feasible \\
$0.1690$& $0.0447$ & $55.00\%$ &$46.67\%$ & $2$ &Feasible \\
$0.1826$& $0.1000$ & $\times$ &$\times$ & $\times$ &Infeasible \\
\bottomrule
\end{tabular}
}
\end{table}
\subsection{Example 2: Mass-Spring System}
In this example, we address \textit{Problem 2}. We consider the mass-spring system from \cite{lin2013design} with $N=2$ masses. The corresponding state-space model is given by \eqref{basic system} with
\begin{equation*}
    \begin{split}
    \small{A_* =} \begin{bmatrix}\begin{smallmatrix}
        0.9900	&0.0050	&0.0997	&0.0002\\
        0.0050	&0.9900	&0.0002	&0.0997\\
        -0.1992	&0.0993	&0.9900	&0.0050\\
        0.0993	&-0.1992	&0.0050	&0.9900
        \end{smallmatrix}\end{bmatrix}, \small{B_* = E = }\begin{bmatrix}\begin{smallmatrix}
        0.0050	&0.0000\\
        0.0000	&0.0050\\
        0.0997	&0.0002\\
        0.0002	&0.0997
        \end{smallmatrix}\end{bmatrix}.
    \end{split}
\end{equation*}
The process noise $d(k)$ is assumed to be bounded with $\epsilon_d=0.1$. We set $\alpha_1=0.05,\beta=0.999$ and choose the event-triggered parameters as $\sigma_x=0.1414$ and $\sigma_u=0.3162$. We first consider the dense controller case and subsequently investigate the trade-off between controller sparsity and performance. By executing \cref{algorithm 2} with $\lambda=0$ in Problem~\eqref{real optimization problem of problem 2}, we obtain
\begin{equation*}
        K^*=\begin{bmatrix}\begin{smallmatrix}
            -4.9938	&-1.0802	&-7.8510	&0.0116\\
            -0.5705	&-4.8836	&-0.1037	&-8.1137
        \end{smallmatrix}\end{bmatrix},\mathrm{Tr}({P^*}^{-1})=0.0150.
\end{equation*}
We simulate the closed-loop system with the initial condition $x(0)=\left[\begin{smallmatrix}1 &-2 &2 &-1\end{smallmatrix}\right]^\top$. The trajectories of the system states and actuator control signals are shown in \cref{problem2 plot}.

As illustrated in \cref{problem2 plot}, both the system states and actuator control signals ultimately converge to a neighborhood of the origin, which is consistent with the UUB property established in \cref{stability with noise theorem}. This region can be characterized by the set $\mathcal{S}$ defined in \eqref{bounded set} with the Lyapunov matrix $P^*$. Furthermore, \cref{problem2:hatu} demonstrates that there are several time steps when the actuators are not updated, reflecting the effect of the event-triggered mechanism. The transmission rates over the S2C and C2A channels are $68.33\%$ and $60.00\%$, respectively.

\begin{figure}[t]
    \centering
    \hfill 
    \begin{subfigure}[b]{0.49\columnwidth}
        \includegraphics[width=\linewidth]{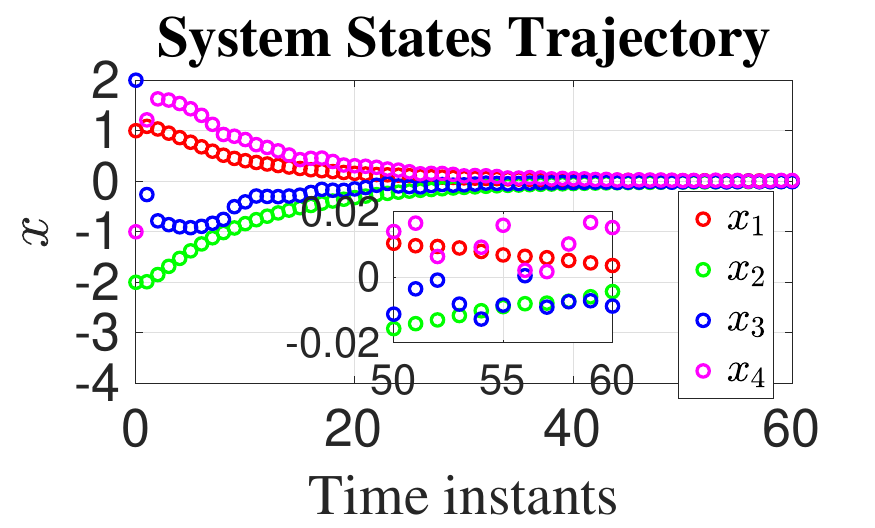}
        \caption{State trajectory $x$.}
        \label{problem2:x}
    \end{subfigure}
    \hfill
    \begin{subfigure}[b]{0.49\columnwidth}
        \includegraphics[width=\linewidth]{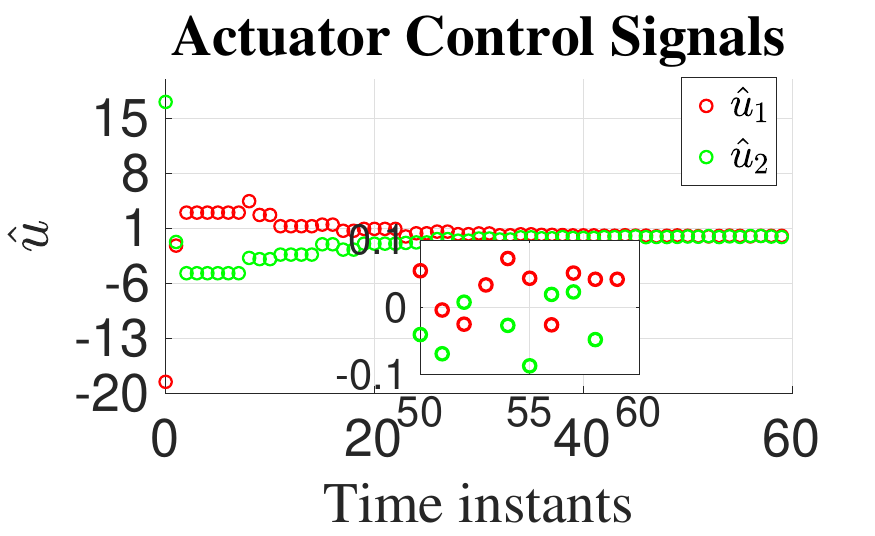}
        \caption{Control input $\hat{u}$.}
        \label{problem2:hatu}
    \end{subfigure}
    \caption{{\color{blue}State and control input trajectories of the closed-loop system with process noise, showing convergence of the state to a neighborhood of the origin and intermittent control updates.}}
    \label{problem2 plot}
\end{figure}
\begin{table}[!t]
\centering
\caption{Effect of $\lambda$ on the size of UUB Set ($\mathrm{Tr}(P^{-1})$) and Controller Sparsity in \cref{algorithm 2}.}
\label{Simulation results for different lambda in algorithm 2}
\begin{tabular}{c c c c c}
\toprule
$\lambda$ & $\mathrm{Tr}(P^{-1})$ & $||K||_0$ & $r_x$ & $r_u$ \\
\midrule
$0$ & $0.0150$ & $8$ & $68.33\%$ &$60.00\%$ \\
$1e-4$ & $0.0157$ & $5$ &$68.33\%$ &$56.67\%$ \\
$1e-2$ & $0.0699$ & $4$ &$73.33\%$ &$56.67\%$ \\
\bottomrule
\end{tabular}
\end{table}
\begin{table}[!t]
\centering
\caption{Effect of Event-Triggered Parameters and the Bound of Process Noise on the Size of UUB Set ($\mathrm{Tr}(P^{-1})$) and Communication Rate (with $\lambda = 0$).}
\label{Effect of Event-Triggered Parameters for example 2}
{\color{blue}
\begin{tabular}{c c c c c c}
\toprule
$\sigma_x$ &$ \sigma_u$& $\epsilon_d$ & $r_x$ & $r_u$ & $\mathrm{Tr}(P^{-1})$\\
\midrule
$0.0316$ & $0.1000$ & $0.1$ & $100\%$ & $73.00\%$  & $0.0074$ \\
$0.0316$ & $0.1000$ & $0.06$ & $100\%$ & $65.00\%$  & $0.0027$ \\
$0.1414$ & $0.3162$ & $0.1$ & $68.33\%$ & $60.00\%$ & $0.0150$\\
$0.1414$ & $0.3162$ & $0.06$ & $66.67\%$& $51.67\%$  & $0.0047$\\
$0.3162$ & $0.4472$ & $0.06$ & $\times$ & $\times$ & $\times$\\
\bottomrule
\end{tabular}
}
\end{table}
Next, we examine the trade-off between the sparsity of the controller and the size of $\mathcal{S}$. We run \cref{algorithm 2} with different values of $\lambda$ in Problem \eqref{real optimization problem of problem 2}. As shown in \cref{Simulation results for different lambda in algorithm 2}, increasing $\lambda$ yields sparser controllers, but the size of $\mathcal{S}$ (measured by $\mathrm{Tr}(P^{-1})$) increases. The transmission rates remain largely unaffected, since they are primarily determined by the event-triggered parameters $\sigma_x$ and $\sigma_u$. This trade-off is expected, since a larger $\lambda$ imposes a higher penalty on the non-sparsity of $K$, potentially at the expense of closed-loop performance.

{\color{blue}
To further illustrate the impact of the event-triggered parameters and the process noise bound, multiple case studies with different $(\sigma_x, \sigma_u, \epsilon_d)$ are reported in \cref{Effect of Event-Triggered Parameters for example 2}, where $\lambda=0$ and sparsity is not enforced. It can be observed that the effect of the event-triggered parameters on the transmission rates is consistent with the noise-free case, i.e., larger $(\sigma_x, \sigma_u)$ lead to lower communication rates. Comparing the first two rows, as well as the third and fourth rows, shows that a larger process noise bound $\epsilon_d$ results in a larger UUB set. Moreover, comparing the first and third rows, as well as the second and fourth rows, indicates that lower transmission rates also lead to a larger UUB set. This is because process noise enlarges the UUB set, while reduced communication under ETC limits feedback information. Finally, the last row shows that excessively large triggering parameters render the problem infeasible, which is consistent with the noise-free case.
}
\begin{figure}[t]
    \centering
    \hfill 
    \begin{subfigure}[b]{0.49\columnwidth}
        \includegraphics[width=\linewidth]{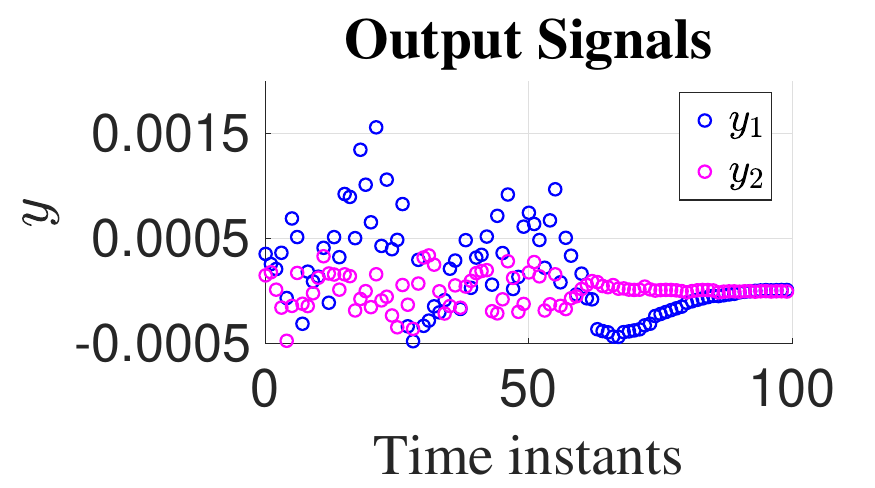}
        \caption{Output trajectory $y$.}
        \label{problem3:y}
    \end{subfigure}
    \hfill
    \begin{subfigure}[b]{0.49\columnwidth}
        \includegraphics[width=\linewidth]{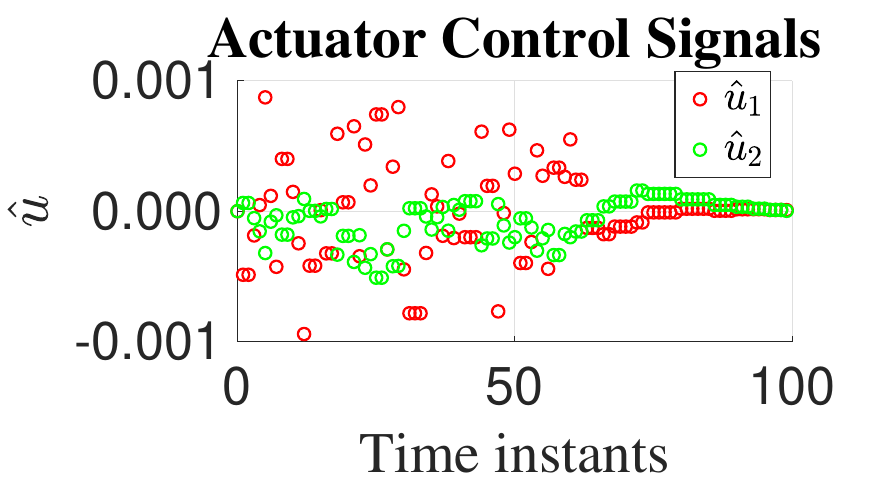}
        \caption{Control input $\hat{u}$.}
        \label{problem3:hatu}
    \end{subfigure}
    \caption{{\color{blue}Output and control input trajectories of the closed-loop system under the $H_\infty$ setting, showing small output energy and intermittent control updates.}}
    \label{problem3 plot}
\end{figure}
\begin{table}[!t]
\centering
\caption{Effect of $\lambda$ on the $H_\infty$ norm bound $\gamma$ and Controller Sparsity in \cref{algorithm 3}.}
\label{Simulation results for different lambda in algorithm 3}
\begin{tabular}{c c c c c}
\toprule
$\lambda$ & $\gamma$ & $||K||_0$ & $r_x$ & $r_u$ \\
\midrule
$0$ & $0.1823$ & $8$ & $88.00\%$ &$59.00\%$ \\
$1e-4$ & $0.1963$ & $6$ &$87.00\%$ &$56.00\%$ \\
$1e-2$ & $0.2156$ & $5$ &$89.00\%$ &$54.00\%$ \\
$1e-1$ & $0.3779$ & $4$ &$89.00\%$ &$57.00\%$ \\
\bottomrule
\end{tabular}
\end{table}
\begin{table}[!t]
\centering
\caption{Effect of Event-Triggered Parameters on the $H_\infty$ norm bound and Communication Rate (with $\lambda = 0$).}
\label{Effect of Event-Triggered Parameters for example 3}
{\color{blue}
\begin{tabular}{c c c c c}
\toprule
$\sigma_x$ &$ \sigma_u$ & $r_x$ & $r_u$ & $\gamma$\\
\midrule
$0.1000$ & $0.1000$  & $97.00\%$ & $94.00\%$  & $0.1058$ \\
$0.1414$ & $0.4472$  & $88.00\%$ & $59.00\%$ & $0.1823$\\
$0.3162$ & $0.1000$  & $\times$ & $\times$ & $\times$\\
\bottomrule
\end{tabular}
}
\end{table}
\subsection{Example 3: Helicopter Model}
In this example, we address \textit{Problem 3}. We consider the example HE1 from COMPleib \cite{leibfritz2003description}, which describes the dynamics of a helicopter. The corresponding discrete-time state-space model is given by \eqref{basic system} and \eqref{performance output} with
\begin{equation*}
    \begin{split}
        &\small{A_*}= \begin{bmatrix}\begin{smallmatrix}
            0.9964	&0.0026	&-0.0004	&-0.0460\\
0.0045	&0.9037	&-0.0188	&-0.3834\\
0.0098	&0.0339	&0.9383	&0.1302\\
0.0005	&0.0017	&0.0968	&1.0067
            \end{smallmatrix}\end{bmatrix}, 
        \small{B_*} =  \begin{bmatrix}\begin{smallmatrix}
            0.0445	&0.0167\\
0.3407	&-0.7249\\
-0.5278	&0.4214\\
-0.0268	&0.0215
            \end{smallmatrix}\end{bmatrix},\\ 
            &\scriptstyle{E} = \begin{bmatrix}\begin{smallmatrix}
                0.0047	&0.0000\\
0.0048	&0.0009\\
0.0042	&0.0001\\
-0.0020	&0.0000
            \end{smallmatrix}\end{bmatrix}, 
        \scriptstyle{C} = \begin{bmatrix}\begin{smallmatrix}
            1.4142	&0\\
0	&0.7071\\
0 &0\\ 
0 &0
            \end{smallmatrix}\end{bmatrix}^\top, \scriptstyle{D}=\begin{bmatrix}\begin{smallmatrix}
            0.7071	&0\\
0	&0.7071
            \end{smallmatrix}\end{bmatrix}.
    \end{split}
\end{equation*}
We set the event-triggered parameters to $\sigma_x=0.1414$ and $\sigma_u=0.4472$, and the maximum iteration number to $20$. We first use the dense controller case to illustrate system behavior. By running \cref{algorithm 3} with $\lambda=0$ in Problem \eqref{real optimization problem of problem 3}, we obtain

\begin{equation*}
K^*  = \begin{bmatrix}\begin{smallmatrix}
    -1.6058	&-0.3153	&0.6056	&1.4077\\
-0.7829	&0.7476	&0.3062	&-0.3214
    \end{smallmatrix}\end{bmatrix} ,\gamma^* = 0.1823.\\
\end{equation*}
We then simulate the closed-loop system with the initial condition $x(0)=\left[\begin{smallmatrix}0 &0 &0 &0\end{smallmatrix}\right]^\top$. To construct a bounded process noise sequence such that $d \in l_2[0,+\infty)$, we assume $d(k)$ satisfies $\|d(k)\|_2 \leq 0.1$ (i.e., $\epsilon_d=0.1$) for the first $60$ time steps, and $d(k)=0$ thereafter. The output signals and actuator control signals are depicted in \cref{problem3 plot}.

As shown in \cref{problem3:y}, the output signal $y(k)$ eventually converges to the origin and remains relatively small compared to $d(k)$ with $\|d(k)\|_2 \leq 0.1$, demonstrating the robustness of the closed-loop system against external disturbances. Furthermore, \cref{problem3:hatu} illustrates that there are several time steps during which the actuators are not updated, reflecting the effect of the event-triggered mechanism. The transmission rates over the S2C and C2A channels are $88.00\%$ and $59.00\%$, respectively.

Next, we investigate the trade-off between controller sparsity and $H_\infty$ performance. We execute \cref{algorithm 3} with different values of $\lambda$ in Problem \eqref{real optimization problem of problem 3}. As observed in \cref{Simulation results for different lambda in algorithm 3}, increasing $\lambda$ yields sparser controllers, but the $H_\infty$ norm bound $\gamma$ increases. This is expected, as a larger $\lambda$ imposes a higher penalty on the non-sparsity of $K$, potentially at the expense of closed-loop performance. Additionally, transmission rates vary slightly, consistent with prior discussions.

{\color{blue}
Similar to the previous examples, multiple case studies with different $(\sigma_x, \sigma_u)$ are reported in \cref{Effect of Event-Triggered Parameters for example 3}. The effect of the event-triggered parameters on the transmission rates, as well as the infeasibility issue for large triggering parameters, is consistent with the previous examples. Specifically, lower transmission rates lead to degraded $H_\infty$ performance, which is consistent with the observation in \textit{Example~2}.
}
{\color{blue}
\subsection{Scaling Analysis}
In this subsection, we investigate the computational scalability of Algorithms \ref{algorithm 1}--\ref{algorithm 3} with respect to the system dimension. To this end, we construct higher-dimensional systems by stacking multiple independent subsystems in the following way 
\begin{equation*}
\begin{split}
    &{A_n}_* = \mathrm{blkdiag}(\overbrace{A,\ldots,A}^{n}), {B_n}_* = \mathrm{blkdiag}(\overbrace{B,\ldots,B}^{n}),\\
    &C_n = \mathrm{blkdiag}(\overbrace{C,\ldots,C}^{n}),D_n = \mathrm{blkdiag}(\overbrace{D,\ldots,D}^{n}),\\
    &E_n = [\overbrace{E^\top|\ldots |E^\top}^{n}]^\top.
\end{split}
\end{equation*}
To ensure a fair comparison across different system dimensions, each algorithm is executed with a fixed number of iterations (e.g., $50$), and the total computational time is recorded accordingly. This setting eliminates the influence of different convergence behaviors and allows us to isolate the effect of the problem size on the computational cost. The results are summarized in \cref{scaling analysis}. Here, $n$ denotes the number of stacked subsystems, and the resulting system dimensions scale as $n_x^{\mathrm{total}} = n n_x$ and $n_u^{\mathrm{total}} = n n_u$. It can be observed that the computational time increases with the system dimension for all three algorithms, which is consistent with the growth in the size of the underlying SDPs. Although the proposed framework can be applied to systems of arbitrary dimensions, we report results for a set of representative system sizes that clearly illustrate this offline scaling behavior. 
\begin{table}[t]
\centering
\caption{Scaling analysis of the proposed algorithms.}
{\color{blue}
\begin{tabular}{cccccc}
\toprule
$n$& $n_x^{total}$ & $n_u^{total}$& Algorithm 1 & Algorithm 2& Algorithm 3\\
\midrule
$1$ & $4$ &$2$ &$17.1859$s & $17.7669$s & $16.8633$s\\

$2$ & $8$ &$4$ &$38.4342$s & $35.9761$s & $24.8666$s \\

$3$ & $12$ &$6$ &$65.1903$s & $59.2414$s & $41.8802$s\\
\bottomrule
\end{tabular}
}
\label{scaling analysis}
\end{table}
\subsection{Comparisons with Existing Literature}
In this subsection, we compare the proposed methods with existing data-driven approaches that focus solely on either SC or ETC design. Specifically, we compare our methods with the following baselines: (i) a data-driven SC design method that does not consider ETC \cite{eising2022informativity}, and (ii) a data-driven ETC design method that does not consider SC \cite{feng2025data}. 
\subsubsection{Comparison with \cite{eising2022informativity}}
Eising and Cort{\'e}s \cite{eising2022informativity} proposed an iterative algorithm to compute a robust stabilizer with maximal sparsity. However, they only assume $A$ to be unknown and require the input matrix $B$ to be known. Besides, they do not consider the effect of measurement noise and only assume the process noise sequence is energy-bounded. Therefore, our data-driven framework is more general and can handle a wider range of practical scenarios. To ensure a fair comparison, we apply the method in \cite{eising2022informativity} to our data-driven framework and compare its operational cost with that of our method. We consider the same setting as the reactor model in \cref{Example 1} but choose a different data length $T=50$. By applying \cref{algorithm 1} and simulating the closed-loop system, we obtain a sparse controller with $||K||_0=1$ and transmission rates $r_x=68.33\%, r_u=66.67\%$. In contrast, the algorithm in \cite{eising2022informativity} returns a sparse controller with $||K||_0=5$. Since \cite{eising2022informativity} does not consider ETC, $r_x=r_u=100.00\%$. Both control strategies can stabilize the system. Nevertheless, the operational cost of our method is significantly lower than that of \cite{eising2022informativity} according to the unified cost model introduced in \cref{unified cost model}.
\subsubsection{Comparison with \cite{feng2025data}}
Feng et al. \cite{feng2025data} proposed to co-design a controller (not necessarily sparse) and $\sigma_u$ to stabilize the closed-loop system. Their data-driven framework differs from ours in that they consider the event-triggered state measurements. To ensure a fair comparison, we apply the method in \cite{feng2025data} to our data-driven framework. We consider exactly the same setting as the reactor model in \cref{Example 1}. As recorded in \cref{Example 1}, \cref{algorithm 1} returns a sparse controller with $||K||_0=1$ and transmission rates $r_x \approx 68\%, r_u \approx 67\%$. In contrast, the method in \cite{feng2025data} leads to $r_u \approx 42\%$. Since  \cite{feng2025data} does not consider SC and only focuses on ETC in the C2A channel, $||K||_0=8$ and $r_x=100\%$. By substituting the known parameters into the unified cost model in \cref{unified cost model}, the operational cost of our method can be expressed as 
\begin{equation*}
    J_{ours} = 2.72\alpha_x + 1.34\alpha_u + \alpha_m + \alpha_f.
\end{equation*}
The operational cost of the method in \cite{feng2025data} is expressed as
\begin{equation*}
    J_{base} = 4\alpha_x + 0.84\alpha_u + 8(\alpha_m + \alpha_f) + 6\alpha_a.
\end{equation*}
The proposed method achieves a lower operational cost whenever $J_{ours} < J_{base}$, which is equivalent to
\begin{equation*}
    1.28\alpha_x + 7(\alpha_m+\alpha_f) + 6\alpha_a > 0.5\alpha_u.
\end{equation*}
This condition is typically satisfied in practical implementations when the S2C communication cost and the intra-controller computation and dataflow costs are non-negligible.
}

\section{Conclusion}\label{Conclusions}
This paper investigated the data-driven co-design of SC and ETC for unknown systems. A novel data-driven framework was proposed to accommodate bounded state and input measurement noise as well as bounded process noise. Within this framework, three fundamental problems were addressed: (i) stability analysis in the absence of process noise; (ii) UUB analysis with bounded process noise; and (iii) $H_\infty$ performance analysis. For each problem, sufficient conditions for the existence of a feasible controller were derived, and iterative algorithms were developed to solve the associated nonconvex optimization problems. The effectiveness of the proposed methods was verified through numerical examples.

\appendix
\subsection{Proof of \cref{lemma:joint bound of process noise and measurement noise}}\label{proof of lemma:joint bound of process noise and measurement noise}
For $\forall{z} = \begin{bmatrix}\begin{smallmatrix}
        z_1^\top & z_2^\top & z_3^\top & z_4^\top
        \end{smallmatrix}\end{bmatrix}^\top$, where $z_1 \in \mathbb{R}^{n_d}, z_2 \in \mathbb{R}^{n_x}, z_3 \in \mathbb{R}^{n_x}$, and $z_4 \in \mathbb{R}^{n_u}$, we have 
\begin{equation*}
    \begin{aligned}
    &z^\top \begin{bmatrix}\begin{smallmatrix}
            d\\
            \delta_x\\
            \delta_x\\
            \delta_u
         \end{smallmatrix}\end{bmatrix}
        \begin{bmatrix}\begin{smallmatrix}
            d\\
            \delta_x\\
            \delta_x\\
            \delta_u
         \end{smallmatrix}\end{bmatrix}^\top z  =(z_1^\top d+z_2^\top \delta_x + z_3^\top \delta_x + z_4^\top \delta_u)^2,
    \end{aligned}
\end{equation*}
where we omit the time index $k$ for brevity. By the Cauchy-Schwarz inequality, we have
\begin{equation*}
    \begin{aligned}
    &(z_1^\top d+z_2^\top \delta_x + z_3^\top \delta_x + z_4^\top \delta_u)^2\\
    & \leq 4((z_1^\top d)^2 + (z_2^\top \delta_x)^2 + (z_3^\top \delta_x)^2 + (z_4^\top \delta_u)^2)\\
    & \leq 4(\|z_1\|_2^2\|d\|_2^2 + \|z_2\|_2^2\|\delta_x\|_2^2 + \|z_3\|_2^2\|\delta_x\|_2^2 + \|z_4\|_2^2\|\delta_u\|_2^2)\\
    & \leq 4(\epsilon_d^2\|z_1\|_2^2 + \epsilon_x^2\|z_2\|_2^2 + \epsilon_x^2\|z_3\|_2^2 + \epsilon_u^2\|z_4\|_2^2)\\
    & = z^\top \mathrm{blkdiag}(4\epsilon_d^2 I, 4\epsilon_x^2 I, 4\epsilon_x^2 I, 4\epsilon_u^2 I) z.    
    \end{aligned}
\end{equation*}
According to the definition of the semidefinite matrix, the proof of \cref{lemma:joint bound of process noise and measurement noise} is completed.$\hfill \square$
{\color{blue}
\subsection{Proof of \cref{boundedness of uncertainty set}}
\label{proof of lemma:boundedness of uncertainty set}
This proof follows by proving that an alternative uncertainty set $\Gamma$ satisfying $\Gamma \supseteq \Sigma$ is bounded. Instead of first constructing the uncertainty set of $(A,B)$ consistent with $(x_m(k),u_m(k),x_m(k+1))$ and then making the intersection for $T$ such samples, we directly construct the uncertainty set of $(A,B)$ consistent with the entire data sequence $(X_m^+,X_m^-,U_m^-)$. We first modify \eqref{compact way} to the following form
\begin{equation*}
    \begin{bmatrix}
        I\\
        A^\top\\
        B^\top
    \end{bmatrix}^\top
    \begin{bmatrix}
        X_m^+\\
        -X_m^-\\
        -U_m^-
    \end{bmatrix}
    =
    \begin{bmatrix}
        I\\
        A^\top\\
        B^\top
    \end{bmatrix}^\top
    G
    \begin{bmatrix}
        D\\
        \Delta_x^-\\
        \Delta_x^+\\
        \Delta_u^-
    \end{bmatrix},
\end{equation*}
where we denote 
\begin{equation*}
\begin{aligned}
    D  &\triangleq \begin{bmatrix}d(0) &d(1) &\dots &d(T-1)\end{bmatrix} \in \mathbb{R}^{n_d \times T},\\
    \Delta_x^- &\triangleq \begin{bmatrix}\delta_x(0) &\delta_x(1) &\dots &\delta_x(T-1)\end{bmatrix} \in \mathbb{R}^{n_x \times T},\\
    \Delta_x^+ &\triangleq \begin{bmatrix}\delta_x(1) &\delta_x(2) &\dots &\delta_x(T)\end{bmatrix} \in \mathbb{R}^{n_x \times T},\\
    \Delta_u^- &\triangleq \begin{bmatrix}\delta_u(0) &\delta_u(1) &\dots &\delta_u(T-1)\end{bmatrix} \in \mathbb{R}^{n_u \times T}. 
\end{aligned}
\end{equation*}
We have the following inequality
\begin{equation*}
    \begin{bmatrix}
        D\\
        \Delta_x^-\\
        \Delta_x^+\\
        \Delta_u^-
    \end{bmatrix}\begin{bmatrix}
        D\\
        \Delta_x^-\\
        \Delta_x^+\\
        \Delta_u^-
    \end{bmatrix}^\top = \sum_{k=0}^{T-1} \begin{bmatrix}
        d(k)\\
        \delta_x(k)\\
        \delta_x(k+1)\\
        \delta_u(k)
    \end{bmatrix}\begin{bmatrix}
        d(k)\\
        \delta_x(k)\\
        \delta_x(k+1)\\
        \delta_u(k)
    \end{bmatrix}^\top \leq T\cdot \Pi.
\end{equation*}
We can define an alternative uncertainty set $\Gamma$ as follows
\begin{equation*}
    \Gamma = \left\{ (A,B) \Bigg|     \begin{bmatrix}
        I\\
        A^\top\\
        B^\top
    \end{bmatrix}^\top
    \begin{bmatrix}
        \setlength{\dashlinegap}{0.8pt}
        \begin{array}{c:c}
        \mathbf{C} &\mathbf{B}^\top\\
        \hdashline
        \\[-10pt]
        \mathbf{B} &\mathbf{A}
        \end{array}
        \end{bmatrix}
    \begin{bmatrix}
        I\\
        A^\top\\
        B^\top
    \end{bmatrix}\succeq 0\right\},
\end{equation*}
where 
\begin{equation*}
    \begin{aligned}
    \mathbf{B} &= \begin{bmatrix} X_m^- \\ U_m^- \end{bmatrix} {X_m^+}^\top, \mathbf{C} = T(4\epsilon_d^2E^2+4\epsilon_x^2 I)-X_m^+ {X_m^+}^\top,\\
      \mathbf{A} &= T\cdot \mathrm{blkdiag}(4\epsilon_x^2 I, 4\epsilon_u^2 I)- \begin{bmatrix} X_m^- \\ U_m^- \end{bmatrix}\begin{bmatrix} X_m^- \\ U_m^- \end{bmatrix}^\top.
    \end{aligned}
\end{equation*}
The uncertainty set $\Gamma$ is non-empty as it must contain $A_*,B_*$. Since \cref{data richness assumption} holds and $T\cdot\max(4\epsilon_x^2,4\epsilon_u^2)\leq \gamma$, we have $\mathbf{A}\prec0$. Therefore, we can conclude that $\Gamma$ is bounded following a similar argument as in \cite{bisoffi2022data}. The relationship $\Sigma \subseteq \Gamma$ can be proved using a similar argument as in \cite{bisoffi2021trade}. $\hfill \square$
}

\subsection{Proof of \cref{stability without noise theorem}}\label{proof of stability without noise theorem}
Given that Problem \eqref{stability without noise theorem eq} is feasible, we first apply the change of variables $P = \Bar{P}^{-1}, \alpha_1 = \Bar{\alpha}_1^{-1}, \alpha_2 = \Bar{\alpha}_2^{-1}$ to \eqref{stability without noise theorem eq1}, which results in 
\begin{equation}\label{temp1 in proof of thr1}
    \begin{aligned}
        &\scalebox{0.98}{$\begin{bmatrix}
            \mathcal{T}(P^{-1}) &\mathcal{B}_1(P^{-1},L,-G) &0\\
            * &\Delta &\mathcal{B}_2(L^\top,P^{-1})\\
            * &* &\mathrm{blkdiag}(\frac{1}{\alpha_2\sigma_u^2}I, \frac{1}{\alpha_1\sigma_x^2}I)
        \end{bmatrix}\succeq0,$}\\
        & \Delta = \mathrm{blkdiag}(\beta P^{-1}, \Omega_1, \Theta_2).
    \end{aligned}
\end{equation}
According to $(\alpha_1^{-1}I-P^{-1})^\top\alpha_1 I(\alpha_1^{-1}I-P^{-1})\succeq0$ and $(\alpha_2^{-1}I-G)^\top\alpha_2 I(\alpha_2^{-1}I-G)\succeq0$, we obtain two inequalities
\begin{equation*}
\begin{aligned}
    \alpha_1 P^{-1}P^{-1}&\succeq 2P^{-1}-\alpha_1^{-1}I, \\
    \alpha_2 G^\top G&\succeq G^\top+G-\alpha_2^{-1}I.
\end{aligned}
\end{equation*}
Therefore, \eqref{temp1 in proof of thr1} imples 
\begin{equation}\label{temp2 in proof of thr1}
    \begin{aligned}
        &\scalebox{0.97}{$\begin{bmatrix}
            \mathcal{T}(P^{-1}) &\mathcal{B}_1(P^{-1},L,-G) &0\\
            * &\Delta &\mathcal{B}_2(L^\top,P^{-1})\\
            * &* &\mathrm{blkdiag}(\frac{1}{\alpha_2\sigma_u^2}I, \frac{1}{\alpha_1\sigma_x^2}I)
        \end{bmatrix}\succeq0,$}\\
        & \Delta = \mathrm{blkdiag}(\beta P^{-1}, \alpha_1(P^{-1})^2, \alpha_2 G^\top G).
    \end{aligned}
\end{equation}
By introducing a variable $L = KP^{-1}$, and pre- and post-multiplying \eqref{temp2 in proof of thr1} by $\text{blkdiag}(I,I,I,P,P,(G^\top)^{-1},I,I)$ and its transpose, we obtain
\begin{equation*}
    \begin{aligned}
        &\begin{bmatrix}
            \mathcal{T}(P^{-1}) &\mathcal{B}_1(I,K,-I) &0\\
            * &\Delta &\mathcal{B}_2(K^\top,I)\\
            * &* &\mathrm{blkdiag}(\frac{1}{\alpha_2\sigma_u^2}I, \frac{1}{\alpha_1\sigma_x^2}I)
        \end{bmatrix}\succeq0,\\
        & \Delta = \mathrm{blkdiag}(\beta P, \alpha_1 I, \alpha_2 I).
    \end{aligned}
\end{equation*}
Applying the Schur complement, this can be transformed into
\begin{equation*}
        \begin{bmatrix}
            \mathcal{T}(P^{-1}) &\mathcal{B}_1(I,K,-I) \\
            * &\mathrm{blkdiag}(\beta P, 0, 0) - \alpha_1\Phi_1 - \alpha_2\Phi_2
        \end{bmatrix}\succeq0,
\end{equation*}
where 
\begin{equation*}
    \scalebox{0.96}{$
    \begin{aligned}
\Phi_1=\begin{bmatrix}
        \sigma_x^2 I &0 &0\\
        0       &-I &0\\
        0 &0 &0
    \end{bmatrix},\Phi_2=\begin{bmatrix}
        \begin{bmatrix}
        I\\
        -I
    \end{bmatrix}\sigma^2_u K^\top K\begin{bmatrix}
        I\\
        -I
    \end{bmatrix}^\top &\begin{bmatrix}
        0\\
        0
    \end{bmatrix}\\
    \begin{bmatrix}
        0 &0
    \end{bmatrix} &-I
    \end{bmatrix}.
    \end{aligned}
    $}
\end{equation*}
By applying the Schur complement once again, we obtain
\begin{equation*}
\begin{bmatrix}
    P^{-1} &0\\
    0  &-M_1 N_1^{-1}M_1^\top
\end{bmatrix}-\sum_{i=0}^{T-1}\theta_i\Psi_i\succeq0,
\end{equation*}
where \begin{equation*}
    M_1=\begin{bmatrix}
        I &0 &0\\
        K &-K &-I
    \end{bmatrix},
    N_1 = \mathcal{B}_4(\beta P)-\alpha_1\Phi_1-\alpha_2\Phi_2.
\end{equation*}
Thus, according to \cref{s procedure}, 
\begin{equation*}
    \begin{bmatrix}
        I\\
        A^\top\\
        B^\top
    \end{bmatrix}^\top\begin{bmatrix}
    P^{-1} &0\\
    0  &-M_1N_1^{-1}M_1^\top
\end{bmatrix}\begin{bmatrix}
        I\\
        A^\top\\
        B^\top
    \end{bmatrix}\succeq0
\end{equation*}
holds for $\forall(A,B)\in\Sigma$. This is equivalent to
\begin{equation*}
\begin{bmatrix}
    \mathcal{B}_4(\beta P)-\alpha_1\Phi_1-\alpha_2\Phi_2 &M_1^\top\begin{bmatrix}
        A^\top\\
        B^\top
    \end{bmatrix}\\
    \begin{bmatrix}
        A &B
    \end{bmatrix}M_1 &P^{-1}
\end{bmatrix}\succeq0
\end{equation*}
for $\forall(A,B)\in\Sigma$ by expansion and application of the Schur complement. Applying another Schur complement leads to
\begin{equation*}
    \mathcal{B}_4(\beta P)-M_1^\top\begin{bmatrix}
        A^\top\\
        B^\top
    \end{bmatrix}P\begin{bmatrix}
        A &B
    \end{bmatrix}M_1-\alpha_1\Phi_1-\alpha_2\Phi_2\succeq0.
\end{equation*}
This, according to \cref{s procedure}, further implies 
\begin{equation*}
\scalebox{0.94}{$
\begin{bmatrix}
    x(k)\\
    e_x(k)\\
    e_u(k)
\end{bmatrix}^\top
    \left(\mathcal{B}_4(\beta P)-M_1^\top\begin{bmatrix}
        A^\top\\
        B^\top
    \end{bmatrix}P\begin{bmatrix}
        A^\top\\
        B^\top
    \end{bmatrix}^\top M_1 \right)\begin{bmatrix}
    x(k)\\
    e_x(k)\\
    e_u(k)
\end{bmatrix}
    \succeq0
    $}
\end{equation*}
for $\forall(A,B)\in\Sigma$ and $\forall \left[\begin{smallmatrix}
    x^\top &e^\top_x(k) &e^\top_u(k)
\end{smallmatrix}\right]^\top$ satisfying \eqref{event-trigger QMI 1} and \eqref{event-trigger QMI 2}. After replacing $M_1$ and expanding, we can easily obtain 
\begin{equation*}
\begin{split}
    &\left(Ax(k)+BKx(k)-BKe_x(k)-Be_u(k)\right)^\top P\\
    &\left(Ax(k)+BKx(k)-BKe_x(k)-Be_u(k)\right) \le \beta x^\top(k) Px(k)
\end{split}
\end{equation*}
for $\forall(A,B)\in\Sigma$ and $\forall \left[\begin{smallmatrix}
    x^\top &e^\top_x(k) &e^\top_u(k)
\end{smallmatrix}\right]^\top$ satisfying \eqref{event-trigger QMI 1} and \eqref{event-trigger QMI 2}, which implies that closed-loop system \eqref{closed-loop system problem 1} is  asymptotically stable. The proof is complete.$\hfill \square$
\subsection{Proof of \cref{relationship between three problems}}\label{proof of relationship between three problems}
The first claim is trivial by leveraging the inequality \eqref{linearization of P inverse}. The second claim is also straightforward since we can always take the solution $(\Tilde{P}^{-1},\Tilde{P},K,G,\alpha_1,\Bar{\alpha}_2,\{\theta_i\}_{i=0,1,\ldots,T-1})$ to ensure the feasibility of Problem \eqref{real optimization problem of problem 1}. We now focus on the third claim. Since \eqref{temp3 eq1} and \eqref{temp3 eq2} are implied by \eqref{stability without noise theorem eq}, given a feasible solution $(\Bar{P},L,G,\Bar{\alpha}_1,\Bar{\alpha}_2,\{\theta_i\}_{i=0,1,\ldots,T-1})$ to Problem~\eqref{stability without noise theorem eq}, $(\Bar{P}^{-1},L\Bar{P}^{-1},G,\Bar{\alpha}_1^{-1},\Bar{\alpha}_2,\{\theta_i\}_{i=0,1,\ldots,T-1})$ is a feasible solution to Problem \eqref{temp3}. The third claim is thus proved by utilizing the second claim. The proof is complete.$\hfill \square$
\subsection{Proof of \cref{stability with noise theorem}}\label{proof of stability with noise theorem}
Similar to the proof of \cref{stability without noise theorem}, given that Problem \eqref{stability with noise theorem eq} is feasible, \eqref{stability with noise theorem eq1} implies
\begin{equation*}
\begin{aligned}
        &\begin{bmatrix}
            \mathcal{T}(P^{-1}-\alpha_2^{-1}EE^\top) &\mathcal{B}_1(I,K,-I) &0\\
            * & \Delta_1 &\mathcal{B}_2(K^\top,I)\\
            * &* &\Delta_2
        \end{bmatrix}\succeq0,\\
        & \Delta_1 = \mathrm{blkdiag}((\beta-\alpha_1) P, \alpha_3 I, \alpha_4 I),\\
        & \Delta_2 = \mathrm{blkdiag}(\frac{1}{\alpha_4\sigma_u^2}I, \frac{1}{\alpha_3\sigma_x^2}I ),
\end{aligned}
\end{equation*}
by the change of variables $\Bar{P}=P^{-1},L=KP^{-1},\alpha_2 = \Bar{\alpha}_2^{-1},\alpha_3=\Bar{\alpha}_3^{-1},\alpha_4=\Bar{\alpha}_4^{-1}$, and using the inequalities
\begin{equation*}
\begin{aligned}
    \alpha_3 P^{-1}P^{-1}&\succeq 2P^{-1}-\alpha_3^{-1}I, \\
    \alpha_4 G^\top G&\succeq G^\top+G-\alpha_4^{-1}I.
\end{aligned}
\end{equation*}
Using the Schur complement, this can be transformed into

\begin{equation*}
\begin{aligned}
        &\begin{bmatrix}
            \mathcal{T}(P^{-1}-\alpha_2^{-1}EE^\top) &\mathcal{B}_1(I,K,-I)\\
            * & \Delta 
        \end{bmatrix}\succeq0,\\
        & \Delta = \mathrm{blkdiag}((\beta-\alpha_1) P, 0, 0) - \alpha_3\Phi_1 - \alpha_4\Phi_2.
\end{aligned}
\end{equation*}
By applying the Schur complement once again, we obtain
\begin{equation*}\begin{bmatrix}
    P^{-1}-\alpha_2^{-1}EE^\top &0\\
    0  &-M_1 N_2^{-1} M_1^\top
\end{bmatrix}
-\sum_{i=0}^{T-1}\theta_i\Psi_i\succeq0,
\end{equation*}
where \begin{equation*}
    N_2=\mathcal{B}_4((\beta -\alpha_1)P)-\alpha_3\Phi_1-\alpha_4\Phi_2.
\end{equation*}
Thus, according to \cref{s procedure}, 
\begin{equation*}
    \begin{bmatrix}
        I\\
        A^\top\\
        B^\top
    \end{bmatrix}^\top\begin{bmatrix}
    P^{-1}-\alpha_2^{-1}EE^\top &0\\
    0  &N_2
\end{bmatrix}\begin{bmatrix}
        I\\
        A^\top\\
        B^\top
    \end{bmatrix}\succeq0
\end{equation*}
holds for $\forall(A,B)\in\Sigma$. After expanding the left-hand side, we leverage $\alpha_1-\alpha_2\epsilon_d^2\ge0$ and apply the Schur complement, which leads to
\begin{equation*}
\begin{bmatrix}
    \alpha_1-\alpha_2\epsilon_d^2 &0 &0 \\
    0 &N_2 &M_1^\top\begin{bmatrix}
        A^\top\\
        B^\top
    \end{bmatrix}\\
    0 &\begin{bmatrix}
        A &B
    \end{bmatrix}M_1 &P^{-1}-\alpha_2^{-1}EE^\top
\end{bmatrix}\succeq0.
\end{equation*}
By applying the Schur complement, the matrix inversion lemma $(P^{-1}-\alpha_2^{-1}EE^\top)^{-1}=P-PE(E^\top PE-\alpha_2 I)^{-1}EP$, and another Schur complement, we can obtain
\begin{equation*}
\begin{bmatrix}
    \alpha_1-\alpha_2\epsilon_d^2 &0 &0\\
0 &N_2-R_{11} &-R_{12}\\
 0 &-R_{21}&\alpha_2 I-R_{22}
\end{bmatrix}\succeq0,
\end{equation*}
where 
\begin{equation*}
    \begin{split}
    &R_{11} = M_1^\top\begin{bmatrix}
                A^\top\\
                B^\top
            \end{bmatrix}P\begin{bmatrix}
                A^\top\\
                B^\top
            \end{bmatrix}^\top M_1, R_{12} = M_1^\top\begin{bmatrix}
        A^\top\\
        B^\top
    \end{bmatrix}PE,\\
    &R_{21} = R_{12}^\top, R_{22} = E^\top PE,\\
        \end{split}
\end{equation*}
which can further be reorganized as 
\begin{equation}\label{split of long equation in proof of thr2}
        \begin{bmatrix}
            0 &0 &0\\
            0 &S-R_{11} &-R_{12}\\
            0 &-R_{21}&-R_{22}
        \end{bmatrix}-\alpha_3
            \Gamma_3
            -\alpha_4
            \Gamma_4-\alpha_1\Gamma_1-\alpha_2\Gamma_2\succeq 0,
\end{equation}
where $ S = \mathcal{B}_4(\beta P), \Gamma_1 = \mathrm{blkdiag}(-1,P,0,0,0), \Gamma_2 = \mathrm{blkdiag}(\epsilon_d^2,0,0,0,-I), \Gamma_3 = \mathrm{blkdiag}(0,\Phi_1,0)$, and $\Gamma_4 = \mathrm{blkdiag}(0,\Phi_2,0)$. We will show that the above inequality can validate (i) and (ii). We first focus on (i). Consider an augmented state $\left[\begin{smallmatrix}
    1 &x^\top &e^\top_x(k) &e^\top_u(k) &d^\top(k)
\end{smallmatrix}\right]^\top$. The condition $V(k)\ge 1$ can be written as 
\begin{equation}\label{proof of thr2 con1}
    \left[\begin{smallmatrix}
    1\\
    x\\
    e_x(k)\\
    e_u(k)\\
    d(k)
\end{smallmatrix}\right]^\top
       \Gamma_1\left[\begin{smallmatrix}
    1\\
    x\\
    e_x(k)\\
    e_u(k)\\
    d(k)
\end{smallmatrix}\right]\ge0.
\end{equation}
Similarly, $||d(k)||_2\leq \epsilon_d$ can be written as
\begin{equation}\label{proof of thr2 con2}
    \left[\begin{smallmatrix}
    1\\
    x\\
    e_x(k)\\
    e_u(k)\\
    d(k)
\end{smallmatrix}\right]^\top
       \Gamma_2\left[\begin{smallmatrix}
    1\\
    x\\
    e_x(k)\\
    e_u(k)\\
    d(k)
\end{smallmatrix}\right]\ge0.
\end{equation}
Besides, using \eqref{event-trigger QMI 1} and \eqref{event-trigger QMI 2}, the event-triggered conditions $||e_x(k)||_2\le\sigma_x||x(k)||_2$ and $||e_u(k)||_2\le\sigma_u||u(k)||_2$ can also be written as
\begin{equation}\label{proof of thr2 con3}
\left[\begin{smallmatrix}
    1\\
    x\\
    e_x(k)\\
    e_u(k)\\
    d(k)
\end{smallmatrix}\right]^\top
       \Gamma_3\left[\begin{smallmatrix}
    1\\
    x\\
    e_x(k)\\
    e_u(k)\\
    d(k)
\end{smallmatrix}\right]\ge0
\end{equation}
and
\begin{equation}\label{proof of thr2 con4}
\left[\begin{smallmatrix}
    1\\
    x\\
    e_x(k)\\
    e_u(k)\\
    d(k)
\end{smallmatrix}\right]^\top
       \Gamma_4\left[\begin{smallmatrix}
    1\\
    x\\
    e_x(k)\\
    e_u(k)\\
    d(k)
\end{smallmatrix}\right]\ge0,
\end{equation}
 respectively. According to \cref{s procedure}, \eqref{split of long equation in proof of thr2} thereby implies 
\begin{equation*}
    \left[\begin{smallmatrix}
    1\\
    x\\
    e_x(k)\\
    e_u(k)\\
    d(k)
\end{smallmatrix}\right]^\top\begin{bmatrix}
            0 &0 &0\\
            0 &S-R_{11} &-R_{12}\\
            0 &-R_{21} &-R_{22}
        \end{bmatrix}\left[\begin{smallmatrix}
    1\\
    x\\
    e_x(k)\\
    e_u(k)\\
    d(k)
\end{smallmatrix}\right]\ge 0
\end{equation*}
for $\forall(A,B)\in\Sigma$ and $\forall \left[\begin{smallmatrix}
    x^\top &e^\top_x(k) &e^\top_u(k) &d^\top(k)
\end{smallmatrix}\right]^\top$ satisfying \eqref{proof of thr2 con1}, \eqref{proof of thr2 con2}, \eqref{proof of thr2 con3}, and \eqref{proof of thr2 con4}. After expansion, we obtain
\begin{equation*}
    \scalebox{0.96}{$
\begin{aligned}
    &(Ax(k)+BKx(k)-BKe_x(k)-Be_u(k)+Ed(k))^\top P(Ax(k)\\
    &+BKx(k)-BKe_x(k)-Be_u(k)+Ed(k)) \le \beta x^\top(k) Px(k).
\end{aligned}
$}
\end{equation*}
Thus, the condition (i) is validated.

We will show that \eqref{split of long equation in proof of thr2} can also validate the condition (ii). Since $\beta\ge\alpha_1\ge0$, we can change a variable $\Bar{\alpha}_1=\beta-\alpha_1\ge0$. By further leveraging $\beta<1$, \eqref{split of long equation in proof of thr2} implies
\begin{equation}\label{split of long equation in proof of thr2 part 2}
    \begin{split}
        \begin{bmatrix}
            1 &0 &0\\
            0 &-R_{11} &-R_{12}\\
            0 &-R_{21} &-R_{22}
        \end{bmatrix}-\alpha_3
            \Gamma_3
            -\alpha_4
            \Gamma_4-\Bar{\alpha}_1\Bar{\Gamma}_1-\alpha_2\Gamma_2\succeq 0,
    \end{split}
\end{equation}
where $\Bar{\Gamma}_1 = \mathrm{blkdiag}(1,-P,0,0,0)$. The condition $V(k)\le 1$ can be written as
\begin{equation}\label{proof of thr2 con5}
    \left[\begin{smallmatrix}
    1\\
    x\\
    e_x(k)\\
    e_u(k)\\
    d(k)
\end{smallmatrix}\right]^\top
       \Bar{\Gamma}_1\left[\begin{smallmatrix}
    1\\
    x\\
    e_x(k)\\
    e_u(k)\\
    d(k)
\end{smallmatrix}\right]\ge0.
\end{equation}
According to \cref{s procedure}, \eqref{split of long equation in proof of thr2 part 2} thereby implies
\begin{equation*}
    \left[\begin{smallmatrix}
    1\\
    x\\
    e_x(k)\\
    e_u(k)\\
    d(k)
\end{smallmatrix}\right]^\top \begin{bmatrix}
            1 &0 &0\\
            0 &-R_{11} &-R_{12}\\
            0 &-R_{21} &-R_{22}
        \end{bmatrix}\left[\begin{smallmatrix}
    1\\
    x\\
    e_x(k)\\
    e_u(k)\\
    d(k)
\end{smallmatrix}\right]\ge0
\end{equation*}
for $\forall(A,B)\in\Sigma$ and $\forall \left[\begin{smallmatrix}
    x^\top &e^\top_x(k) &e^\top_u(k) &d^\top(k)
\end{smallmatrix}\right]^\top$ satisfying \eqref{proof of thr2 con5}, \eqref{proof of thr2 con2}, \eqref{proof of thr2 con3}, and \eqref{proof of thr2 con4}. After expansion, we obtain
\begin{equation*}
\begin{split}
    &\left(Ax(k)+BKx(k)-BKe_x(k)-Be_u(k)+Ed(k)\right)^\top P\\
    &\left(Ax(k)+BKx(k)-BKe_x(k)-Be_u(k)+Ed(k)\right) \le 1,
\end{split}
\end{equation*}
which validates the condition (ii). The proof is complete. $\hfill \square$

\subsection{Proof of \cref{monotonicity pro}}\label{proof of monotonicity pro}
Consider two positive scalars $c_1,c_2$ with $c_1> c_2$, then there exists a scalar $n>1$ such that $c_1=n c_2$. Assume that Problem \eqref{stability with noise theorem eq} is feasible when $\alpha_1=c_1$ with the solution $
(\Bar{P}, L, G,\Bar{\alpha}_2,\Bar{\alpha}_3, \Bar{\alpha}_4, \{\theta_i\}_{i=0,1,\ldots,T-1})$. By scaling, Problem \eqref{stability with noise theorem eq} is feasible when $\alpha_1=c_2$ with the solution $
(n\Bar{P}, nL, nG,n\Bar{\alpha}_2,n\Bar{\alpha}_3, n\Bar{\alpha}_4, \{n\theta_i\}_{i=0,1,\ldots,T-1})$ because $(\beta-c_2)n\Bar{P}=(\beta-c_1/n)n\Bar{P}\succeq (\beta-c_1) n\Bar{P}$.$\hfill \square$
{\color{blue}
\subsection{Experiment Settings and Implementation Procedures}
In this subsection, we present the experimental settings and implementation procedures to enhance reproducibility. For clarity, let $T_p$ denote the simulation length. The common parameter settings for \textit{Examples 1--3} are summarized below.
\begin{table}[H]
\centering{\color{blue}
\begin{tabular}{ccccccc}
\toprule
\textbf{Parameter}&Random Seed  & $T_s$ & $T$ & $\epsilon_x$ & $\epsilon_u$ & $\epsilon_r$ \\
\midrule
\textbf{Value}&rng(123) & $0.1$ & $100$ & $0.001$ &$0.001$ &$10^{-6}$ \\
\bottomrule
\end{tabular}}
\end{table}

The parameter settings and implementation procedure for \textit{Example 1} are summarized below.
\begin{table}[H]
\centering{\color{blue}
\begin{tabular}{ccccccc}
\toprule
\textbf{Parameter}& $\epsilon_d$  & $\sigma_x$ &$\sigma_u$   &$T_p$ &$\beta$\\
\midrule
\textbf{Value}&$0$  &$0.1581$ &$0.01$  &$60$ &$0.999$\\
\bottomrule
\end{tabular}}
\end{table}

\begin{table}[H]
\centering{\color{blue}
\begin{tabular}{c p{0.8\columnwidth}}
\toprule
\textbf{Step} & \textbf{Description} \\
\midrule
1 & Discretize the REA1 model using the ZOH method with sampling period $T_s$ to obtain $A_*$ and $B_*$.\\
2 & Starting from $\left[\begin{smallmatrix} 0 & 0 & 0 & 0 \end{smallmatrix}\right]^\top$, apply a sequence of inputs $\{u(k)\}_{k=0}^{T-1}$ to the system with $u_i(k)$ uniformly sampled from $[-1,1]$, and collect $X_m$ and $U_m$ under $\|\delta_x(k)\|_2 \leq \epsilon_x$ and $\|\delta_u(k)\|_2 \leq \epsilon_u$. \\
3 & Run \cref{algorithm 1} to obtain the controller gain $K$. \\
4 & Starting from $\left[\begin{smallmatrix} -1 & -2 & 2 & -2 \end{smallmatrix}\right]^\top$, simulate the system under the feedback control law with $K$ and the event-triggered mechanism.\\
5 & Compute the transmission rates $r_x$ and $r_u$. \\
\bottomrule
\end{tabular}}
\end{table}

The parameter settings and implementation procedure for \textit{Example 2} are summarized below.
\begin{table}[H]
\centering{\color{blue}
\begin{tabular}{cccccccc}
\toprule
\textbf{Parameter}& $\epsilon_d$  &$\alpha_1$ & $\sigma_x$ &$\sigma_u$   & $T_p$ &$\beta$\\
\midrule
\textbf{Value}&$0.1$  &$0.05$ &$0.1414$ &$0.3162$   &$60$ &$0.999$\\
\bottomrule
\end{tabular}}
\end{table}

\begin{table}[H]
\centering{\color{blue}
\begin{tabular}{c p{0.8\columnwidth}}
\toprule
\textbf{Step} & \textbf{Description} \\
\midrule
1 & Discretize the mass-spring system using the ZOH method with sampling period $T_s$ to obtain $A_*$, $B_*$ and $E$.\\
2 & Starting from $\left[\begin{smallmatrix} 0 & 0 & 0 & 0 \end{smallmatrix}\right]^\top$, apply a sequence of inputs $\{u(k)\}_{k=0}^{T-1}$ to the system with $u_i(k)$ uniformly sampled from $[-1,1]$, and collect $X_m$ and $U_m$ under $\|\delta_x(k)\|_2 \leq \epsilon_x$, $\|\delta_u(k)\|_2 \leq \epsilon_u$, and  $\|d(k)\|_2 \leq \epsilon_d$. \\
3 & Run \cref{algorithm 2} with different values of $\lambda$ to obtain the corresponding controller gains $K$. \\
4 & Starting from $\left[\begin{smallmatrix}1 &-2 &2 &-1\end{smallmatrix}\right]^\top$, simulate the system under the feedback control law with $K$ and the event-triggered mechanism.\\
5 & Compute the transmission rates $r_x$ and $r_u$. \\
\bottomrule
\end{tabular}}
\end{table}

The parameter settings and implementation procedure for \textit{Example 3} are summarized below.
\begin{table}[H]
\centering{\color{blue}
\begin{tabular}{ccccccc}
\toprule
\textbf{Parameter}&  $\epsilon_d$  & $\sigma_x$ &$\sigma_u$  & $T_p$\\
\midrule
\textbf{Value}& $0.1$  &$0.1414$ &$0.4472$  &$100$\\
\bottomrule
\end{tabular}}
\end{table}

\begin{table}[H]
\centering{\color{blue}
\begin{tabular}{c p{0.8\columnwidth}}
\toprule
\textbf{Step} & \textbf{Description} \\
\midrule
1 & Discretize the HE1 model using the ZOH method with sampling period $T_s$ to obtain $A_*$, $B_*$, $C$, $D$, and $E$.\\
2 & Starting from $\left[\begin{smallmatrix} 0 & 0 & 0 & 0 \end{smallmatrix}\right]^\top$, apply a sequence of inputs $\{u(k)\}_{k=0}^{T-1}$ to the system with $u_i(k)$ uniformly sampled from $[-1,1]$, and collect $X_m$ and $U_m$ under $\|\delta_x(k)\|_2 \leq \epsilon_x$, $\|\delta_u(k)\|_2 \leq \epsilon_u$, and  $\|d(k)\|_2 \leq \epsilon_d$. \\
3 & Run \cref{algorithm 3} with different values of $\lambda$ to obtain the corresponding controller gains $K$. \\
4 & Starting from $\left[\begin{smallmatrix}0 &0 &0 &0\end{smallmatrix}\right]^\top$, simulate the system under the feedback control law with $K$ and the event-triggered mechanism.\\
5 & Compute the transmission rates $r_x$ and $r_u$. \\
\bottomrule
\end{tabular}}
\end{table}
}
\footnotesize{ 
\bibliographystyle{IEEEtran}
\bibliography{reference}
}
	\begin{IEEEbiography}[{\includegraphics[width=1in,height=1.25in,clip,keepaspectratio]{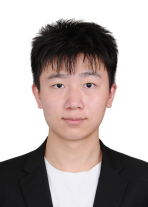}}]{Zhaohua Yang} received the B.Eng. degree from Zhejiang University, Hangzhou, China, and the B.S. degree from University of Illinois at Urbana-Champaign, Urbana, USA, in 2022. He is currently pursuing the Ph.D. degree with the Department of Electronic and Computer Engineering at The Hong Kong University of Science and Technology, Hong Kong, China. From August 2021 to January 2022, he was a visiting student at the Grainger College of Engineering, University of Illinois at Urbana-Champaign, Urbana, USA.
		His research interests include optimal control, data-driven control, and sparse optimization.
	\end{IEEEbiography}

	\begin{IEEEbiography}[{\includegraphics[width=1in,height=1.25in,clip,keepaspectratio]{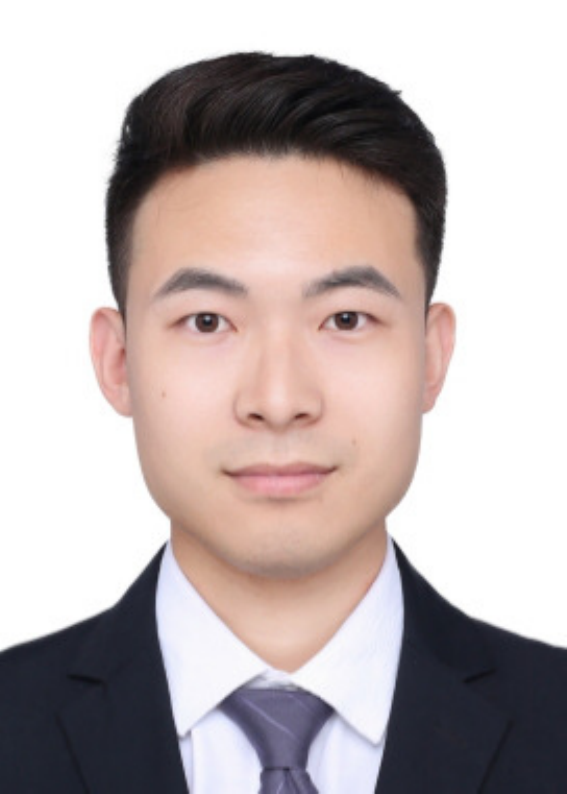}}]{Xiaoxu Lyu} received his B.Eng. degree in Naval Architecture and Marine Engineering from Harbin Institute of Technology in 2018 and his Ph.D. degree in Dynamical Systems and Control from Peking University in 2023. He is currently a Research Assistant Professor in the Department of Electronic and Computer Engineering at The Hong Kong University of Science and Technology (HKUST). From 2024 to 2025, he was a Postdoctoral Fellow at HKUST. His research interests include distributed estimation and control, data-driven systems, and multi-robot systems.
	\end{IEEEbiography}

	\begin{IEEEbiography}[{\includegraphics[width=1in,height=1.25in,clip,keepaspectratio]{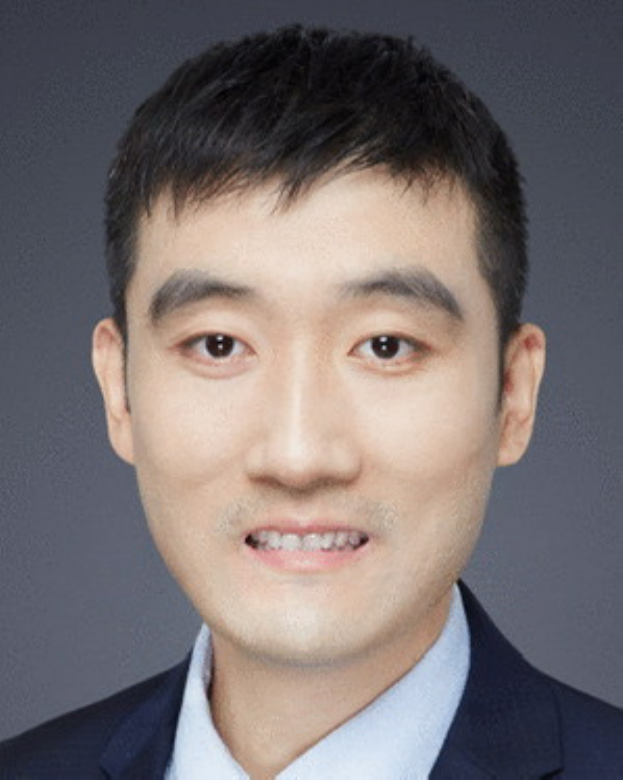}}]{Dawei Shi} received the B.E. degree in electrical
engineering and automation from Beijing Insti-
tute of Technology, Beijing, China, in 2008, and
the Ph.D. degree in control systems from the
University of Alberta, Edmonton, AB, Canada,
in 2014. In December 2014, he was appointed
as an Associate Professor at the School of Au-
tomation, Beijing Institute of Technology. From
February 2017 to July 2018, he was with the
Harvard John A. Paulson School of Engineering
and Applied Sciences, Harvard University, as a
Post-Doctoral Fellow of bioengineering. Since July 2018, he has been
with the School of Automation, Beijing Institute of Technology, where he
is currently a Professor. His research interests include the analysis and
synthesis of complex sampled-data control systems with applications to
biomedical engineering, robotics, and motion systems. He is a member
of the Early Career Advisory Board of Control Engineering Practice. He
serves as an Associate Editor/a Technical Editor for IEEE Transactions
on Industrial Electronics, IEEE/ASME Transactions on Mechatronics,
IEEE Control Systems Letters, and IET Control Theory and Applications.
He was a Guest Editor of European Journal of Control. He served as an
Associate Editor for IFAC World Congress and a member for the IEEE
Control Systems Society Conference Editorial Board.
	\end{IEEEbiography}
    
		\begin{IEEEbiography}[{\includegraphics[width=1in,height=1.25in,clip,keepaspectratio]{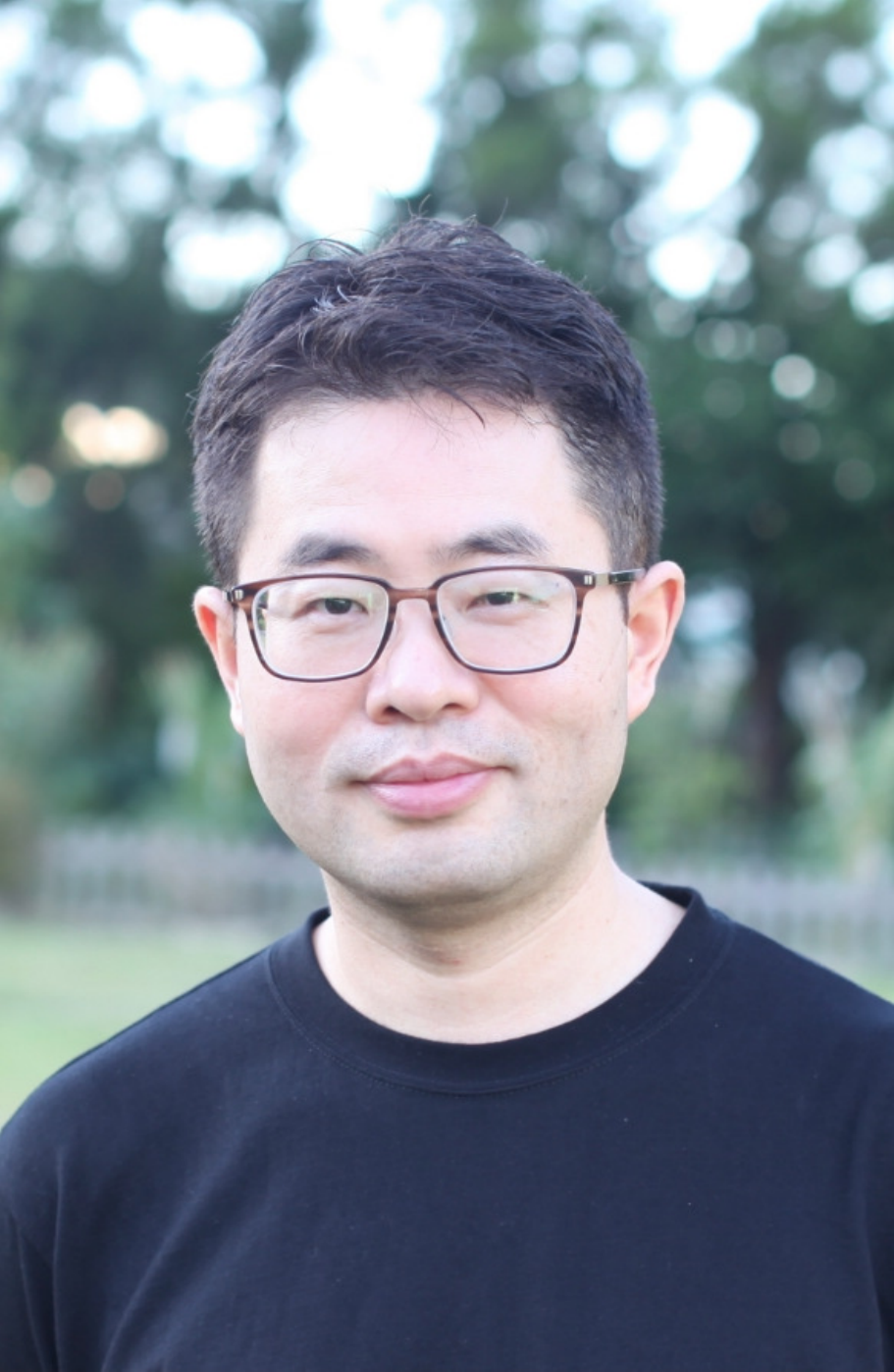}}]{Ling Shi} received his B.E. degree in Electrical and Electronic Engineering from The Hong Kong University of Science and Technology (HKUST) in 2002 and  Ph.D. degree in Control and Dynamical Systems from The California Institute of Technology (Caltech) in 2008. He is currently a Professor in the Department of Electronic and Computer Engineering at HKUST with a joint appointment in the Department of Chemical and Biological Engineering (2025-2028), and the Director of The Cheng Kar-Shun Robotics Institute (CKSRI). His research interests include cyber-physical systems security, networked control systems, sensor scheduling, event-based state estimation, and multi-agent robotic systems (UAVs and UGVs). He served as an editorial board member for the European Control Conference 2013-2016. He was a subject editor for International Journal of Robust and Nonlinear Control (2015-2017), an associate editor for IEEE Transactions on Control of Network Systems (2016-2020), an associate editor for IEEE Control Systems Letters (2017-2020), and an associate editor for a special issue on Secure Control of Cyber Physical Systems in IEEE Transactions on Control of Network Systems (2015-2017). He also served as the General Chair of the 23rd International Symposium on Mathematical Theory of Networks and Systems (MTNS 2018). He is currently serving as a member of the Engineering Panel (Joint Research Schemes) of the Hong Kong Research Grants Council (RGC) (2023-2026). He received the 2024 Chen Han-Fu Award given by the Technical Committee on Control Theory, Chinese Association of Automation (TCCT, CAA). He is a member of the Young Scientists Class 2020 of the World Economic Forum (WEF), a member of The Hong Kong Young Academy of Sciences (YASHK), and he is an IEEE Fellow.
	\end{IEEEbiography}
\end{document}